\def\cC{\mathcal{C}}
\def\cE{\mathcal{E}}
\def\cF{\mathcal{F}}
\def\cG{\mathcal{G}}
\def\cH{\mathcal{H}}
\def\cI{\mathcal{I}}
\def\cL{\mathcal{L}}
\def\cM{\mathcal{M}}
\def\cO{\mathcal{O}}
\def\bP{\mathbb{P}}
\def\cP{\mathcal{P}}
\def\cV{\mathcal{V}}
\def\cW{\mathcal{W}}
\def\bZ{\mathbb{Z}}
\def\wt{\widetilde}
\DeclareMathOperator{\Bl}{Bl}
\DeclareMathOperator{\Gr}{Gr}
\DeclareMathOperator{\Num}{Num}
\DeclareMathOperator{\Pic}{Pic}
\DeclareMathOperator{\pr}{pr}
\DeclareMathOperator{\Proj}{Proj}
\DeclareMathOperator{\Spec}{Spec}
\DeclareMathOperator{\Sym}{Sym}
\newtheorem{thm}{Theorem}[section]
\newtheorem{lem}[thm]{Lemma}
\newtheorem{cor}[thm]{Corollary}
\newtheorem{prop}[thm]{Proposition}
\newtheorem{question}{Question}
\newtheorem{rem}[thm]{Remark}
\newtheorem{defn}[thm]{Definition}
\g@addto@macro\bfseries{\boldmath} 
\begin{document}

\title{Enumerating pencils with moving ramification on curves}

\author{Carl Lian}
\address{Institut f\"{u}r Mathematik, Humboldt-Universit\"{a}t zu Berlin, 12489 Berlin, Germany}
\email{liancarl@hu-berlin.de}
\urladdr{\url{https://sites.google.com/view/carllian}}

\date{\today}

\begin{abstract}
We consider the general problem of enumerating branched covers of the projective line from a fixed general curve subject to ramification conditions at possibly moving points. Our main computations are in genus 1; the theory of limit linear series allows one to reduce to this case. We first obtain a simple formula for a \textit{weighted} count of pencils on a fixed elliptic curve $E$, where base-points are allowed. We then deduce, using an inclusion-exclusion procedure, formulas for the numbers of maps $E\to\bP^1$ with moving ramification conditions. A striking consequence is the invariance of these counts under a certain involution. Our results generalize work of Harris, Logan, Osserman, and Farkas-Moschetti-Naranjo-Pirola.
\end{abstract}

\maketitle


\section{Introduction}\label{intro}

\begin{question}\label{main_question}
Let $(C,p_1,\ldots,p_n)$ be a general pointed curve of genus $g$, where $2g-2+n>0$. Let $d,d_1,d_2,\ldots,d_{n+m}$ be integers such that $2\le d_i\le d$ for all $i$. How many $(m+1)$-tuples $(p_{n+1},\ldots,p_{n+m},f)$ are there, where $p_1,p_2,\ldots,p_{n+m}\in C$ are pairwise distinct points, and $f:C\to\bP^1$ is a morphism of degree $d$ with ramification index at least $d_i$ at each $p_i$?
\end{question}

In other words, we count $f:C\to\bP^1$ (up to automorphisms of the target) subject to ramification conditions at $n$ fixed points and $m$ moving points. According to a naïve dimension count, we should expect the answer to be a positive integer when 
\begin{equation}\label{total_ramification_conditions}
g+2(d-g-1)=\sum_{i=1}^{n}(d_i-1)+\sum_{i=n+1}^{n+m}(d_i-2).
\end{equation}
Indeed, under the genericity assumption, the associated moduli problem has dimension zero if and only if (\ref{total_ramification_conditions}) holds. Comparison with the Riemann-Hurwitz formula shows that $m\le 3g$; in fact, by introducing additional moving simple ramification points, one may assume $m=3g$, at the cost of multiplying the answer to Question \ref{main_question} by  $(3g-m)!$.

Various special cases of Question \ref{main_question} have been addressed in the literature, and arise naturally in the study of cycles on moduli spaces of curves. Formulas were given in the case $m=0$ by Osserman \cite{osserman}, and the case $(n,m)=(1,1)$ by Logan \cite[Theorem 3.2]{logan}. The case $g=1$, $n=1$, $m=3$, $(d_1,d_2,d_3,d_4)=(d,d-1,3,2)$ was established by Harris \cite[Theorem 2.1(f)]{harris}. Most recently, Farkas-Moschetti-Naranjo-Pirola \cite{fmnp} introduced \textit{alternating Catalan numbers}, counting minimal degree covers $f:C\to\bP^1$ with alternating monodromy group: this is the case where $n=0$, $d=2g+1$, and $d_i=3$ for $i=1,2,\ldots,m=3g$.

In this paper, we give an essentially complete answer to Question \ref{main_question}. First, we record the well-known answer when $g=0$, in which case we must have $m=0$:

\begin{thm}[cf. \cite{osserman}]\label{genus0_count}
Let $p_1,\ldots,p_n$ be general points on $\bP^1$. Let $d,d_1,\ldots,d_n$ be integers satisfying $d_1+\cdots+d_n=2d-2+n$. Then, the number of degree $d$ morphisms $f:\bP^1\to\bP^1$ with ramification index at least $d_i$ at $p_i$ is equal to the intersection number
\begin{equation*}
\int_{\Gr(2,d+1)}\sigma_{d_1-1}\cdots\sigma_{d_n-1}.
\end{equation*}
\end{thm}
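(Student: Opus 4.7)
My plan is to identify degree-$d$ morphisms $f\colon\bP^1\to\bP^1$, taken up to automorphism of the target, with base-point-free pencils $V\subset H^0(\bP^1,\cO(d))$, and to recast the ramification constraints as Schubert conditions on $\Gr(2,d+1)$. Since $h^0(\bP^1,\cO(d))=d+1$, this identification already displays the parameter space of such maps as a Zariski open subset of $\Gr(2,d+1)$.

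The key translation is that $f$ has ramification index $\geq d_i$ at $p_i$ if and only if $V$ contains a section vanishing to order $\geq d_i$ at $p_i$. Letting $W_i\subset H^0(\bP^1,\cO(d))$ denote the $(d+1-d_i)$-dimensional subspace of such sections, the condition becomes $\dim(V\cap W_i)\geq 1$, which with respect to any complete flag extending $W_i$ cuts out the special Schubert class $\sigma_{d_i-1}$. Since the $p_i$ are general, the associated flags are in general position for the $\mathrm{PGL}_{d+1}$-action on $\Gr(2,d+1)$, and Kleiman's transversality theorem guarantees that $\sigma_{d_1-1},\ldots,\sigma_{d_n-1}$ meet transversely. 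The intersection is zero-dimensional by the numerical hypothesis $\sum(d_i-1)=2d-2=\dim\Gr(2,d+1)$, and its length is exactly $\int_{\Gr(2,d+1)}\sigma_{d_1-1}\cdots\sigma_{d_n-1}$.

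It remains to verify that each point of this intersection corresponds to a genuine degree-$d$ morphism with ramification \emph{equal to} $d_i$ at each $p_i$. Pencils possessing a base point form a proper closed subvariety of $\Gr(2,d+1)$, while pencils whose vanishing order at some $p_i$ strictly exceeds $d_i$ are confined to $\sigma_{d_i}\subsetneq\sigma_{d_i-1}$; in both cases the offending stratum has strictly positive codimension inside the already zero-dimensional intersection, hence is avoided for general $p_i$. I expect this last check to be the only technical point, though it reduces to a routine codimension estimate together with Kleiman transversality rather than a substantive new difficulty.
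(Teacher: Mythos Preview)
Your identification of morphisms with base-point-free pencils and of the ramification condition at $p_i$ with the Schubert condition $\sigma_{d_i-1}$ is correct and matches the paper. The genuine gap is in your transversality argument. You invoke Kleiman's theorem, asserting that because the $p_i$ are general, the associated flags are ``in general position for the $\mathrm{PGL}_{d+1}$-action.'' This is false: the flags in question are the \emph{osculating flags} of the rational normal curve $\bP^1\hookrightarrow\bP(H^0(\cO(d))^\vee)$ at the points $p_i$, and as $p_i$ varies over $\bP^1$ they trace out only a one-dimensional family inside the full flag variety. They are therefore highly non-generic, and Kleiman's theorem---which requires moving the cycles by arbitrary elements of the group---does not apply.

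That Schubert varieties with respect to osculating flags at \emph{general} points of $\bP^1$ nonetheless intersect transversely is a genuine theorem, and is precisely what the paper invokes: it cites \cite{mtv} (Mukhin--Tarasov--Varchenko) for this fact. Earlier work (e.g.\ Eisenbud--Harris) establishes that such intersections have the expected \emph{dimension}, but reducedness---equivalently, that each pencil counts with multiplicity one---requires the deeper input of \cite{mtv}. Your last paragraph, handling base points and excess ramification by a codimension estimate, is fine once the transversality is in hand, since those loci lie in strictly smaller Schubert cycles and hence (again by \cite{mtv} or the expected-dimension results) are missed by the zero-dimensional intersection.
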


The condition of ramification of order $d_i$ at a general point $p_i$ is parametrized by a Schubert cycle of class $\sigma_{d_i-1}\in A^{*}(\Gr(2,H^0(\bP^1,\cO(d))))$. Thus, the content of Theorem \ref{genus0_count} is that the Schubert cycles associated to general points $p_i$ intersect transversely, which follows from \cite{mtv}.

Our main new results are on an elliptic curve $(E,p_1)$, where we adapt the method of \cite{harris}. The principal difficulty is the possibility that the moving points may become equal, producing high-dimensional excess loci. We circumvent this problem by imposing the ramification conditions one at a time, and in two steps: first, impose the divisorial condition of simple ramification at $p_i$. Then, subtract the ``diagonal'' excess divisors where $p_j=p_i$, where $j<i$, and express the condition of higher ramification in terms of a contact condition of the residual divisor in the universal family of pencils on $E$.

This process introduces contributions from pencils with base-points, with multiplicities equal to certain intersection numbers on a Grassmannian. We are led to a natural weighting (see Definition \ref{pencil_weight}) on the set of pencils on $E$, and obtain:

\begin{thm}\label{genus1_weighted_count}
Let $(E,p_1)$ be a general elliptic curve. Let $d,d_1,d_2,d_3,d_4$ be integers such that $d\ge2$, $1\le d_i\le 2d+1$ and $d_1+d_2+d_3+d_4=2d+4$. Then, the \textit{weighted} number of 4-tuples $(V,p_2,p_3,p_4)$, where the $p_i\in E$ are pairwise distinct points, and $V$ is a pencil on $E$ of degree $d$ with total vanishing at least $d_i$ at $p_i$, is
\begin{equation*}
\wt{N}_{d_1,d_2,d_3,d_4}=\frac{12C_{d-2}}{d}(d_1-1)(d_2-1)(d_3-1)(d_4-1),
\end{equation*}
where 
\begin{equation*}
C_n=\frac{1}{n+1}\binom{2n}{n}
\end{equation*}
denotes the $n$-th Catalan Number.
\end{thm}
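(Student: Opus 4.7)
The strategy, following Harris \cite{harris}, is to impose the four vanishing conditions sequentially in the universal pencil over the parameter space $B=\Pic^d(E)\times\Gr(2,d)$, which has dimension $2d-3$. Since the total expected codimension $(d_1-1)+\sum_{i=2}^{4}(d_i-2)=2d-3$ matches the dimension of $B$, the weighted count should arise as a zero-dimensional intersection number on $B$, and I would compute it by induction on $\sum d_i$.

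For the inductive step, I would treat each moving point $p_i$ (for $i=2,3,4$) in two stages. First, insist that $p_i$ lie in the ramification divisor of the pencil: this is a codimension-one condition, cut out by the universal ramification divisor $R\subset B\times E$, whose class is readily computed from Chern classes of the universal line bundle and the relative cotangent bundle along $E$. Second, to upgrade simple ramification to total vanishing at least $d_i$, note that the residual divisor $R-p_i$ must have a zero of order at least $d_i-2$ at $p_i$; this is $(d_i-2)$ further divisorial contact conditions, each obtained by pulling back along $p_i$ successive jets of the defining section of the residual universal ramification divisor.

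The main obstacle is excess intersection. Each sequential cut produces excess contributions from (a) the diagonals $\{p_j=p_i\}$ for $j<i$, where the ramification divisor acquires unintended common support, and (b) the loci of pencils with a base point at $p_i$, where the notion of residual divisor requires modification. Contribution (a) is removed by inclusion-exclusion, producing correction terms that are themselves weighted counts with smaller parameters, suitable for the induction. Contribution (b) is precisely what the weighting of Definition \ref{pencil_weight} is designed to absorb: a pencil with base locus of degree $b$ at $p_i$ should be counted with multiplicity equal to a Schubert intersection number on the Grassmannian $\Gr(2,d-b)$, calibrated so that the excess at each base-point stratum assembles into a clean symmetric product formula in the $d_i-1$. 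Verifying the exact multiplicity at each excess stratum, and bookkeeping signs in the inclusion-exclusion, is where I expect the bulk of the technical work.

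Finally, I would verify the base case $d=2$, $d_1=d_2=d_3=d_4=2$: the unique pencil is $|2p_1|$, whose three other ramification points are the non-$p_1$ two-torsion points of $E$, giving $3!=6$ ordered triples $(p_2,p_3,p_4)$, matching $\frac{12C_0}{2}=6$. For the induction, both sides of the claimed identity should satisfy the same recurrence in the $d_i$, which I would check by direct substitution of the proposed product formula. The Catalan factor $C_{d-2}$ should ultimately come from the Plücker-degree identity $\int_{\Gr(2,d)}\sigma_1^{2d-4}=C_{d-2}$, which controls the leading intersection number and hence the absolute count of pencils when the conditions degenerate to simple ones.
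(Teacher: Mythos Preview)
Your proposal shares the core geometric idea---impose ramification as contact conditions of the universal ramification divisor and subtract diagonal excess---but the architecture and the role of the weighting differ from the paper's proof in ways that matter.

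The paper does not work over $\Pic^d(E)$ and does not proceed by induction. It fixes a line bundle $\cL$, frees $p_1$ as a fourth moving point, and computes a single intersection number on $G\times E_1\times\cdots\times E_4$ with $G=\Gr(2,H^0(\cL))$, dividing by $d^2=\#E[d]$ at the end (Proposition~\ref{fixL_instead}). More importantly, the weighting is not something ``calibrated to absorb'' base-point excess after an inclusion-exclusion. The key lemma (Lemma~\ref{principal_parts}) shows that the contact locus $T_r$---where the ramification divisor $\Sigma_1$ meets the $E$-fiber to order at least $r$---satisfies $[T_r]=[\Sigma_1]^r=\sum_{a+b=r}c_{a,b}[\Sigma_{a,b}]$; the weights $c_{a,b}$ appear automatically as the scheme-theoretic multiplicities of the vanishing-sequence strata in this contact cycle. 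So your item (b) is not an excess to be removed, but exactly what the contact construction computes, and no stratum-by-stratum bookkeeping is needed. The diagonals (your item (a)) are handled by a local Taylor-expansion argument (Lemma~\ref{D_2_formula}) showing $\Delta_{ij}$ occurs with multiplicity exactly $d_j-1$; subtracting these yields the explicit class
\[
\prod_{i=1}^{4}\Bigl(\pr_i^{*}[\Sigma_1]-\sum_{j<i}(d_j-1)[\Delta_{ji}]\Bigr)^{d_i-1},
\]
which is integrated directly using $[\Sigma_1]=\sigma_1+2dx$ in $\Num(G\times E)$. The Catalan factor enters precisely as you anticipate, via $\int_G\sigma_1^{2d-4}=C_{d-2}$.

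The substantive gap in your plan is the induction itself: you assert both sides satisfy ``the same recurrence'' without writing one down, and since $\sum d_i=2d+4$ is fixed there is no evident parameter that decreases under your diagonal corrections. Without the contact-locus identity of Lemma~\ref{principal_parts} to produce the weights for free, it is unclear how such a recursion would close.
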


In order to extract the answer to Question \ref{main_question} when $(g,n,m)=(1,1,3)$, we carry out a delicate inclusion-exclusion procedure, and obtain:

\begin{thm}\label{genus1_unweighted_count}
Let $(E,p_1)$ be a general elliptic curve. Let $d,d_1,d_2,d_3,d_4$ be integers such that $d\ge2$, $1\le d_i\le d$ and $d_1+d_2+d_3+d_4=2d+4$. Then, the number $N_{d_1,d_2,d_3,d_4}$ of 4-tuples $(p_2,p_3,p_4,f)$, where $p_i\in E$ are pairwise distinct points, and $f:E\to\bP^1$ is a morphism of degree $d$ with ramification index at least $d_i$ at each $p_i$, is equal to:
\begin{enumerate}
\item[(a)] The intersection number
\begin{equation*}
\int_{\Gr(2,d+1)}\left(\prod_{i=1}^{4}\sum_{a_i+b_i=d_i-2}\sigma_{a_i}\sigma_{b_i}\right)(8\sigma_{11}-2\sigma_1^2).
\end{equation*} 
\item[(b)] The constant term of the Laurent polynomial
\begin{equation*}
P_{d_1-1}P_{d_2-1}P_{d_3-1}P_{d_4-1},
\end{equation*}
where
\begin{equation*}
P_{r}=rq^{r}+(r-2)q^{r-2}+\cdots+(2-r)q^{2-r}+(-r)q^{-r}.
\end{equation*}
\item[(c)] An explicit piecewise polynomial function of degree 7 in $d_1,d_2,d_3,d_4$, see (\ref{N_explicit_case1}) and (\ref{N_explicit_case2}) of \S \ref{genus1_unweighted_explicit}.
\end{enumerate}
\end{thm}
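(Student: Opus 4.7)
The plan is to deduce part (a) from Theorem \ref{genus1_weighted_count} by a base-point inclusion-exclusion, and then to re-express the resulting Grassmannian integral to obtain parts (b) and (c).

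For part (a), I would organize $\wt N_{d_1,d_2,d_3,d_4}$ as a sum, indexed by possible base divisors $B$ of a pencil $V$ enumerated by Theorem \ref{genus1_weighted_count}, of counts of residual basepoint-free pencils; these are precisely the morphisms that Theorem \ref{genus1_unweighted_count} counts. If $V$ has base-point multiplicity $b_i$ at $p_i$ and the residual morphism $f$ has ramification $e_i$ at $p_i$, then the total vanishing of $V$ at $p_i$ equals $2b_i+e_i$; the sum is therefore indexed by quadruples $(b_1,b_2,b_3,b_4)$ together with a multiplicity $b_0$ of free base-points away from the $p_i$, and the free base-point contributions can be computed by a parameter count on $E$. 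Inverting this relation (a M\"obius-type inversion on base-point multiplicities) should recover $N_{d_1,d_2,d_3,d_4}$ as a sum, indexed by splittings $a_i+b_i=d_i-2$, of Schubert products $\sigma_{a_i}\sigma_{b_i}$ on $\Gr(2,d+1)$; the correction class $8\sigma_{11}-2\sigma_1^2$ should absorb the leading factor $\frac{12C_{d-2}}{d}$ of Theorem \ref{genus1_weighted_count} together with the residual contributions from the excess strata where moving base-points collide.

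For part (b), I would recast the integral of part (a) as a constant term via a suitable identification of Schubert calculus on $\Gr(2,d+1)$ with the constant-term functional on symmetric Laurent polynomials in $q, q^{-1}$. Concretely, the identity $P_r(q) = q\frac{d}{dq}\!\left(\frac{q^{r+1}-q^{-r-1}}{q-q^{-1}}\right) = q\frac{d}{dq}[r+1]_q$ (a derivative of the $SL_2$-character of the $(r+1)$-dimensional representation) shows that $P_r$ is the image of $\sum_{a+b=r}\sigma_a\sigma_b$ under this transform, while $8\sigma_{11}-2\sigma_1^2$ corresponds to the derivative weighting produced by $q\frac{d}{dq}$. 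Part (c) then follows by expanding $\prod_{i=1}^{4} P_{d_i-1}(q)$ monomial-by-monomial: the constant term is a signed sum over index vectors $(k_1,\ldots,k_4)$ with $0\le k_i\le d_i-1$ satisfying $\sum_{i=1}^{4}(d_i-1-2k_i)=0$. Evaluating this sum by a case analysis on sign patterns of the exponents produces an explicit piecewise polynomial of total degree $7$, with the case split appearing as (\ref{N_explicit_case1}) and (\ref{N_explicit_case2}).

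The principal obstacle is part (a). The inclusion-exclusion is genuinely delicate because base-points at different $p_i$ interact, the conversion from ``total vanishing'' to ``ramification'' depends nonlinearly on the base-point multiplicities, and the excess loci where moving points coincide (partly handled already in Theorem \ref{genus1_weighted_count}) must be carefully tracked under inversion. In particular, the sharp bound $d_i \le d$ in Theorem \ref{genus1_unweighted_count}, as opposed to $d_i\le 2d+1$ in Theorem \ref{genus1_weighted_count}, should emerge as a consistency output of the inversion, reflecting the dimensional constraint that ramification cannot exceed the degree of the morphism.
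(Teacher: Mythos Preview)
Your proposal is a sketch rather than a proof, and the key gap is in part (a). The relation to be inverted is precisely Proposition~\ref{recursion}:
\[
\wt N_{d_1,d_2,d_3,d_4}=\sum_{k_1,\ldots,k_4\ge 0}\Bigl(\prod_i c_{d_i-k_i-1,k_i}\Bigr)\,N_{d_1-2k_1,\ldots,d_4-2k_4}.
\]
There is no extra index $b_0$ for ``free base-points away from the $p_i$'': removing the base locus at the $p_i$ already produces a base-point-free pencil of the correct degree, so your proposed indexing set is wrong. More seriously, you give no mechanism by which a M\"obius inversion of this triangular system, whose coefficients are products of SYT numbers, would output the particular Schubert integrand $\bigl(\prod_i\sum_{a_i+b_i=d_i-2}\sigma_{a_i}\sigma_{b_i}\bigr)(8\sigma_{11}-2\sigma_1^2)$. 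The class $8\sigma_{11}-2\sigma_1^2$ does not arise from ``excess strata where moving points collide'' (those were already absorbed in Theorem~\ref{genus1_weighted_count}); it must come out of the inversion itself, and your outline does not produce it.

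The paper carries out the inversion by a different route: it decouples $d$ from the $d_i$ (Definition~\ref{extended_N}), packages the $N^d$ and $\wt N^d$ into five-variable generating functions, and observes that the recursion becomes the explicit substitution $x_i\mapsto (1-\sqrt{1-4x_i^2q})/(2x_iq)$ (Proposition~\ref{N_vs_tildeN}), which is inverted by $x_i\mapsto x_i/(1+x_i^2q)$. This gives a closed formula for $N(x_1,\ldots,x_4,q)$ (Corollary~\ref{N_gf_formula}); extracting the relevant coefficient via a partial-fraction computation (Lemma~\ref{rational_function_x_coeffs}) yields a one-variable expression (Corollary~\ref{N_coeff_onevar}), and a direct identity between $q$-coefficients and Grassmannian integrals (Lemma~\ref{integral_formula}) then produces the Schubert formula. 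None of these steps is visible in your outline.

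For part (b), your character-theoretic transform $P_r=q\,\tfrac{d}{dq}[r{+}1]_q$ is a nice observation, but you do not say what ``constant-term functional'' realizes $\int_{\Gr(2,d+1)}(-)(8\sigma_{11}-2\sigma_1^2)$, nor how the derivative accounts for that specific class. The paper instead computes $\int_{\Gr(2,d+1)}\sigma_{n_1}\sigma_{n_2}\sigma_{n_3}\sigma_{n_4}(8\sigma_{11}-2\sigma_1^2)$ directly by Pieri (Lemmas~\ref{integral_fourfold_product} and~\ref{integral_against_magic_class}), obtaining values in $\{-2,0,2,4,6\}$ governed by the linear relations among the $n_i$, and then matches these term-by-term with the constant-term expansion of $\prod P_{d_i-1}$. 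Your plan for part (c) agrees with the paper's.
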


When $(d_1,d_2,d_3,d_4)=(d,d,2,2)$, we recover the familiar fact that the number of covers $f:E\to\bP^1$ of degree $d$, totally ramified at the origin and one other point, are in bijection with the $d^2-1$ elements of $E[d]-\{p_1\}$. When $(d_1,d_2,d_3,d_4)=(d,d-1,3,2)$, we recover \cite[Theorem 2.1(f)]{harris}. When $(d_1,d_2,d_3,d_4)=(3,3,3,3),(5,3,3,3)$, we recover \cite[Theorem 4.1]{fmnp} and \cite[Theorem 4.8]{fmnp}, respectively.

We may then deduce the following ``duality."

\begin{thm}\label{genus1_duality}
Let $(E,p_1),d,d_1,d_2,d_3,d_4$ be as in Theorem \ref{genus1_unweighted_count}. Then, we have
\begin{equation*}
N_{d_1,d_2,d_3,d_4}=N_{d+2-d_1,d+2-d_2,d+2-d_3,d+2-d_4}.
\end{equation*}
\end{thm}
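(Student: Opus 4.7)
The plan is to establish the duality by showing that one of the three equivalent expressions for $N_{d_1,d_2,d_3,d_4}$ in Theorem~\ref{genus1_unweighted_count} is preserved under the involution $\iota:d_i\mapsto d+2-d_i$. First I would observe that $\iota$ preserves the constraint $\sum d_i=2d+4$, and that at the boundary $d_i=1$ both sides vanish (in (b), $P_0=0$; on the dual side, one would be demanding ramification of order $d+1$ at some $p_i$, which exceeds the degree).

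The most direct route is via the piecewise polynomial formula (c). I would verify that $\iota$ is compatible with the chamber decomposition defining (\ref{N_explicit_case1}) and (\ref{N_explicit_case2}): since the chambers are cut out by inequalities among the $d_i$ and $d$ that are themselves symmetric under the reflection about $d/2+1$, each chamber either maps to itself or to a chamber on which the same polynomial formula holds. The invariance then reduces to a direct algebraic identity of a degree-$7$ polynomial in $d_1,\ldots,d_4$ (with $d$ playing the role of the center of the reflection), which I expect to hold by routine expansion.

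A more conceptual route is via the constant-term expression (b). Setting $r_i:=d_i-1$ with $\sum r_i=2d$, the identity to establish becomes
\begin{equation*}
\text{CT}_q\left[\prod_{i=1}^4 P_{r_i}(q)\right] = \text{CT}_q\left[\prod_{i=1}^4 P_{d-r_i}(q)\right].
\end{equation*}
Using the generating function
\begin{equation*}
\sum_{r\ge0} P_r(q)\, x^r = \frac{x(q-q^{-1})}{(1-xq)^2(1-xq^{-1})^2},
\end{equation*}
one could try to recast both sides as residue integrals and exhibit the symmetry by a contour or variable substitution. Unpacking explicitly, $\text{CT}_q\prod P_{r_i} = \sum_{\sum k_i=d,\,0\le k_i\le r_i}\prod_i(r_i-2k_i)$, so the identity is a combinatorial equality between two finite sums whose summation ranges $[0,r_i]$ and $[0,d-r_i]$ do not match; hence no termwise bijection is available, and the symmetry must come from a global identity.

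The main obstacle is that none of the expressions (a), (b), (c) makes the duality manifest, so any proof is fundamentally an identity verification rather than a conceptual argument. For this reason I would proceed with the chamber-by-chamber computation via (c) as the practical route, treating the resulting constant-term identity in (b) and the corresponding Schubert-theoretic identity in (a) as corollaries. If a conceptual explanation is desired, one could seek it in terms of Poincar\'e duality on $\Gr(2,d+1)$ combined with the relation $\sigma_{11}\sigma_{d-1}=0$, which controls the factor $8\sigma_{11}-2\sigma_1^2$ in (a) and may play the role of a ``compensating'' class bridging $\alpha_r$ and $\alpha_{d-2-r}$.
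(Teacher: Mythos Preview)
Your primary route via the explicit piecewise polynomials in (c) is exactly what the paper does: it verifies, by direct computation in SAGE, that the two explicit formulas (\ref{N_explicit_case1}) and (\ref{N_explicit_case2}) are exchanged under $d_i\mapsto \tfrac{1}{2}(d_1+d_2+d_3+d_4)-d_i$.

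One correction to your chamber analysis: the involution does \emph{not} preserve each chamber. With the convention $d_1\ge d_2\ge d_3\ge d_4$, the image satisfies $d+2-d_4\ge d+2-d_3\ge d+2-d_2\ge d+2-d_1$, and after reordering one finds $(d_1'-d_2',d_3'-d_4')=(d_3-d_4,d_1-d_2)$, so the two cases $d_1-d_2\gtrless d_3-d_4$ swap. Thus the identity to check is that the polynomial (\ref{N_explicit_case1}) is sent to (\ref{N_explicit_case2}) (and conversely), not that either is individually invariant. Your remarks on (a) and (b) are reasonable but, as you note, neither makes the duality manifest; the paper does not pursue them.
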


A similar duality was observed by Liu-Osserman in genus 0, see \cite[Question 5.1]{liuosserman}. We are not aware of a geometric explanation for this phenomenon in genus 0 or genus 1, nor whether it generalizes in any way to higher genus.

Finally, we consider the general case. As in \cite{logan,osserman,fmnp}, we degenerate to a comb curve in which the $p_1,\ldots,p_n$ specialize to general points on the rational spine, and obtain:

\begin{thm}\label{degeneration}
For any $g,d,d_1,\ldots,d_{n+m}$, the answer to Question \ref{main_question} is determined explicitly by the formulas given in Theorems \ref{genus0_count} and \ref{genus1_unweighted_count}, see Proposition \ref{degeneration_formula_precise}.
\end{thm}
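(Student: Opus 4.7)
My plan is to apply the theory of limit linear series to degenerate the general pointed curve $(C, p_1, \ldots, p_n)$ to a comb curve $C_0$ consisting of a rational spine $R = \bP^1$ carrying the fixed points $p_1, \ldots, p_n$ together with $g$ general attaching points $q_1, \ldots, q_g$, and a general elliptic tail $(E_i, q_i)$ glued to $R$ at each $q_i$. Following the strategy of \cite{osserman, logan, fmnp}, limit linear series computations on $C_0$ translate the enumeration on the general fiber into a combinatorial sum of contributions from each component. As a preliminary reduction, by introducing $3g - m$ additional moving simple ramification points and dividing by $(3g - m)!$ at the end, I may assume $m = 3g$.

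A limit $\mathfrak{g}^1_d$ on $C_0$ is specified by an aspect $\ell_R$ on $R$ and aspects $\ell_{E_i}$ on each $E_i$, compatible at every node in the sense that if the vanishing sequence of $\ell_R$ at $q_i$ is $(a_0^i, a_1^i)$, then that of $\ell_{E_i}$ at $q_i$ is $(d - a_1^i, d - a_0^i)$. Each of the $3g$ moving ramification points specializes to some component of $C_0$, and a direct check of expected dimensions, combined with the genericity of $(E_i, q_i)$, shows that every elliptic tail must absorb exactly three moving points while none land on the spine: any other distribution either forces a higher-dimensional moduli problem on some component or leaves it empty, contributing nothing.

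The total count will therefore be a sum, over ordered partitions $\{n+1, \ldots, n+3g\} = S_1 \sqcup \cdots \sqcup S_g$ with $|S_i| = 3$ and over all compatible vanishing sequences $\{(a_0^i, a_1^i)\}$ at the nodes, of a product of two factors: first, the number of aspects $\ell_R$ satisfying the prescribed ramification at the $p_i$'s and the imposed vanishing at the $q_i$'s, given by Theorem \ref{genus0_count}; and second, for each $i$, the number of aspects $\ell_{E_i}$ with the induced vanishing at $q_i$ and moving ramification at the three points indexed by $S_i$, given by Theorem \ref{genus1_unweighted_count}, after passing to the residual pencil of degree $a_1^i$ to absorb any base point that $\ell_{E_i}$ may have at $q_i$.

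The principal obstacle is to confirm that every contributing limit series is smoothable with the correct multiplicity, and that \emph{non-refined} limits (those whose nodal vanishing inequalities are strict) do not produce spurious contributions. The standard Eisenbud--Harris smoothing theorem handles refined limits, but some additional care is needed: moving ramification points could a priori collide with the nodes $q_i$, creating boundary loci that must be shown not to contribute. The genericity of the attaching data on $R$ and of each $(E_i, q_i)$ is used to exclude such degenerations. Assembling the contributions from all distributions and all vanishing data yields the explicit formula recorded in Proposition \ref{degeneration_formula_precise}.
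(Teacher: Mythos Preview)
Your proposal is correct and follows essentially the same approach as the paper: degenerate to the comb curve, use additivity of the Brill-Noether number to force exactly three moving points onto each elliptic tail and to conclude that all contributing limits are refined, and then assemble the count as a sum over distributions and nodal vanishing data of products of the genus-0 and genus-1 contributions. The paper organizes the two technical points you flag---reducedness/smoothability and the exclusion of moving points limiting to nodes or colliding---into Lemmas \ref{LLS_comb_structure} and \ref{LLS_specialization}, the latter handled by passing to a compact-type semistable model and again invoking Brill-Noether additivity, which is precisely the ``genericity'' argument you gesture at.
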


While the idea is simple, the resulting degeneration formula is complicated, because in general, there are many ways to assign ramification sequences at the nodes of the comb. As a result, this approach has not yet yielded simple formulas answering Question \ref{main_question}, as in the case of genus 1.

We also remark that the methods in the proof of Theorem \ref{genus1_weighted_count} work in the general case: one can define a weighted count as in genus 1, and proceed in a similar way. However, combinatorial difficulties again arise from the fact that the number of moving points is linear in $g$. Thus, when $g\ge2$, obtaining answers to Question \ref{main_question} in the spirit of Theorems \ref{genus1_weighted_count} and \ref{genus1_unweighted_count} remains open.

The structure of this paper is as follows. We collect a series of preliminary facts in \S \ref{preliminary_section}. We develop the main geometric input in \S \ref{weighted_section}, proving Theorem \ref{genus1_weighted_count}. \S \ref{unweighted_section} is purely combinatorial: here we deduce Theorem \ref{genus1_unweighted_count} and Theorem \ref{genus1_duality} from Theorem \ref{genus1_weighted_count}. Finally, we explain the degeneration method in \S \ref{degeneration_section}, giving a precise version of Theorem \ref{degeneration}.

\vskip 5pt

\noindent {\bf Acknowledgments:} I am grateful to my advisor, Johan de Jong, for numerous ideas and stimulating discussions throughout the course of this project. I also thank Amol Aggarwal, Andrei Bud, Dawei Chen, Henry Liu, Melissa Liu, Brian Osserman, Nicola Tarasca, and Michael Thaddeus for helpful comments and conversations, as well as the anonymous referee for numerous corrections. This project was undertaken with the support of an NSF Graduate Research Fellowship.

\section{Preliminaries}\label{preliminary_section}
\subsection{Conventions}
We work over an algebraically closed field $k$ of characteristic zero.

If $V$ is a vector space, $\bP V$ denotes the variety $\Proj(\Sym^{*}V^\vee)$, parametrizing lines in $V$. More generally, if $\cV$ is a vector bundle over a scheme, we follow the same convention. Similarly, $\Gr(r,V)$ is the Grassmannian of $r$-planes in $V$.

Let $\cL$ be a line bundle on a smooth curve $C$ and let $(V,\cL)$ be a linear series on $C$, that is, $V\subset H^0(C,\cL)$. We will often refer abusively to the linear series $(V,\cL)$ as simply $V$. In this paper, $V$ will always have rank 1, that is, the dimension of the vector space $V$ is 2. The \textbf{vanishing sequence} of $V$ at a point $p\in C$ is the pair $(a_0,a_1)$ such that, in terms of some analytic local coordinate $x$ around $p$, the sections of $V$ are $x^{a_0},x^{a_1}$, where $a_1>a_0\ge0$ are integers. The \textbf{total vanishing} of $V$ at $p$ is the integer $a_0+a_1$, and $V$ has a \textbf{base-point} at $p$ if and only if $a_0\ge1$. If $a_0=0$, the \textbf{ramification index} of $V$ at $x$ is $a_1$; we also say that $V$ is \textbf{ramified to order $a_1$} at $p$. These same definitions make sense when $V$ is a limit linear series on a compact type curve $C$, and $p\in C$ is a smooth point.

Suppose that $V$ has degree $d$, that is, the degree of the underlying line bundle $\cL$ is equal to $d$. The \textbf{Brill-Noether number} of $V$ respect to marked points $p_i\in C$ at which $V$ has vanishing sequence $(a_i,b_i)$ for $i=1,2,\ldots,n$ is
\begin{equation*}
\rho(V,\{p_i\})=g+2(d-g-2)-\sum_{i=1}^{n}(a_i+b_i-1).
\end{equation*}
If $V$ is a limit linear series on a compact type curve $C$ on which the $p_i$ are smooth points, the same definition makes sense. Let $C_0\subset C$ be an irreducible sub-curve of $C$. We will denote the Brill-Noether number of the $C_0$-aspect of $V$ with respect to the marked points and nodes on $C_0$ by $\rho(V,\{p_i\})_{C_0}$. A straightforward computation shows that when $V$ is a \textit{crude} limit linear series (in the sense of \cite{eh_lls}), we have
\begin{equation}\label{bn_number_additive}
\rho(V,\{p_i\})\ge\sum_{C_0\subset C}\rho(V,\{p_i\})_{C_0},
\end{equation}
with equality if and only if $V$ is a \textit{refined} limit linear series.

We consider counts of morphisms $f:C\to\bP^1$ up to automorphisms of the target. Thus, it is equivalent to count isomorphism classes of base-point-free pencils (linear series of rank 1) on the fixed curve $C$.

If $F(q)$ is a power series in $q$, we denote the coefficient of $q^d$ by $F(q)[q^d]$. If $\alpha\in A^{*}(X)$ is a Chow class on some variety $X$, then $\{\alpha\}_d$ denotes its projection to $A^d(X)$.

\vskip 3pt

\subsection{Numerology}\label{numerology} Here, we collect the numerical conditions in order for Question \ref{main_question} to have interesting answers.

The celebrated Brill-Noether theorem states that the moduli space of linear series of degree $d$ and rank $r$ and on a general curve $C$ has dimension $\rho(d,g,r)=g+(r+1)(d-g-r)$, and moreover that loci determined by ramification conditions at fixed general points of $C$ have the expected codimension, see \cite[Theorem 4.5]{eh_lls}. However, ramification conditions at \textit{moving} points may fail to impose the expected number of conditions, that is, Brill-Noether loci in $\cM_{g,n}$ may have lower-than-expected codimension, see \cite[\S 2]{eh_counterexamples}.

On the other hand, owing to the existence of well-behaved Hurwitz spaces, moving ramification conditions impose the correct number of conditions in the case $r=1$. We summarize this in the following well-known proposition:

\begin{prop}\label{expected_dim}
Let $(C,p_1,\ldots,p_n)$ be a general marked curve of genus $g$, where $2g-2+n>0$. Let $d\ge2$ be an integer, and let $(a^i_0,a^i_1)$, $i=1,2,\ldots,n+m$ be ordered pairs of integers satisfying $0\le a^i_0<a^i_1\le d$ for $i=1,2,\ldots,n+m$ and $a^i_1>1$ for $i=n+1,\ldots,n+m$. Let $\cG$ be the moduli space of tuples $(V,p_{n+1},\ldots,p_{n+m})$, where the $p_i\in C$ are pairwise distinct points, and $V$ is a pencil with vanishing sequence at least $(a^i_0,a^i_1)$ at $p_i$ for $i=1,2,\ldots,n+m$. Then, $\cG$ is pure of the expected dimension
\begin{equation*}
    \rho'=g+2(d-g-1)-\sum_{i=1}^{n}(a^i_0+a^i_1-1)-\sum_{i=n+1}^{n+m}(a^i_0+a^i_1-2).
\end{equation*}
In particular, if $\rho'<0$, then $\cG$ is empty.
\end{prop}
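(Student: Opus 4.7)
The plan is to realize $\cG$ as a fiber of a forgetful morphism from a relative Hurwitz-type moduli space to $\cM_{g,n}$, and compute the fiber dimension using the classical smoothness of Hurwitz spaces with prescribed ramification profiles.

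First I would reduce to the base-point-free case $a_0^i = 0$. Stratify $\cG$ by the common base divisor $D$ of $V$; since each section of $V$ vanishes at $p_i$ to order at least $a_0^i$, one has $D \geq \sum_i a_0^i p_i$, and equality defines an open stratum. On this open stratum, the map $V \mapsto V \otimes \cO(-\sum_i a_0^i p_i)$ identifies $\cG$ with the moduli of base-point-free pencils of degree $d' = d - \sum_i a_0^i$ with ramification index at least $a_1^i - a_0^i$ at $p_i$, and the expected dimensions agree by a direct check. Any closed stratum where $D$ has additional degree $e \geq 1$ has dimension $\rho' - e$: imposing $e$ extra base points drops the Grassmannian fiber dimension by $2e$, while the position of the extra base divisor contributes $e$. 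These strata are strictly smaller, so purity of $\cG$ of dimension $\rho'$ reduces to the base-point-free case.

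Assume now $a_0^i = 0$. A base-point-free pencil $V$ on a smooth curve $\wt{C}$ is equivalent, modulo $\mathrm{Aut}(\bP^1)$, to a degree-$d$ morphism $f: \wt{C} \to \bP^1$ with the same ramification data. Let $\cG^{\mathrm{rel}}$ be the relative moduli space of tuples $(\wt{C}, q_1, \ldots, q_{n+m}, V)$ with $\wt{C}$ smooth of genus $g$, the $q_i$ distinct, and $V$ a base-point-free pencil of degree $d$ having ramification index at least $a_1^i$ at $q_i$; then $\cG$ is the fiber of the forgetful morphism $\cG^{\mathrm{rel}} \to \cM_{g,n}$ over $(C, p_1, \ldots, p_n)$. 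Modulo the $\mathrm{Aut}(\bP^1)$ action on targets, $\cG^{\mathrm{rel}}$ is, up to labeling, the Hurwitz space of covers with ramification index $a_1^i$ at $q_i$ and simple ramification at $k = 2d + 2g - 2 - \sum_i (a_1^i - 1)$ remaining points (by Riemann--Hurwitz). The branch morphism to $(\bP^1)^{n+m} \times \Sym^k(\bP^1)$ is étale onto its image, so $\cG^{\mathrm{rel}}$ is smooth of dimension $n + m + k - 3$, and a short arithmetic check yields $\dim \cG^{\mathrm{rel}} - \dim \cM_{g,n} = \rho'$.

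The conclusion will then follow from the fiber-dimension theorem applied to $\cG^{\mathrm{rel}} \to \cM_{g,n}$: since $\cM_{g,n}$ is smooth and irreducible and $\cG^{\mathrm{rel}}$ is smooth (hence pure), every non-empty fiber has dimension at least $\rho'$, and the fiber over a generic point of the image has dimension exactly $\rho'$. For $\rho' \geq 0$, Brill--Noether for rank $1$ ensures some component of the morphism is dominant, so $\cG$ is pure of dimension $\rho'$ for generic $(C, p_1, \ldots, p_n)$; for $\rho' < 0$, dominance fails for dimension reasons and $\cG$ is therefore empty. The principal obstacle is the classical smoothness of Hurwitz spaces with prescribed ramification profiles, which I would appeal to as a black box; given this, both the base-point stratification and the fiber-dimension argument are routine bookkeeping.
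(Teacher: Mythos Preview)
Your proposal is correct and follows the same route as the paper: twist away the prescribed base points to reduce to the case $a_0^i=0$, then identify $\cG$ with a fiber of a Hurwitz-type space over $\cM_{g,n}$ and invoke the classical fact that such Hurwitz spaces are \'etale over the configuration space of branch points. The paper's proof is three sentences ending in ``we omit the details''; you have essentially supplied those details, though note that your appeal to ``Brill--Noether for rank 1'' to establish dominance is unnecessary for purity (non-dominant components simply miss the general point of $\cM_{g,n}$) and mildly circular.
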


\begin{proof}
We may assume by twisting $V$ and decreasing $d$ that the $a_0^i=0$ for all $i$. We recall the following classical fact: Hurwitz spaces of covers $C\to\bP^1$ with prescribed ramification profiles are \'{e}tale over the spaces $M_{0,r}$ parametrizing branch divisors on $\bP^1$. This is essentially the content of the Riemann Existence Theorem, but alternatively follows by passing to (the appropriate components of) the smooth stack $B_{g,n}(S_d)$ of twisted $S_d$-covers, which agrees with the Hurwitz space on the locus of curves of smooth curves, see \cite[Theorems 3.0.2, 4.3.2]{acv}

In particular, the Hurwitz spaces have the expected dimension, so the spaces $\cG$ do as well. We omit the details.
\end{proof}

Thus, in Question \ref{main_question}, we impose the condition (\ref{total_ramification_conditions}).

\begin{cor}\label{enumerated_covers_are_generic}
Suppose that (\ref{total_ramification_conditions}) holds. Then, all of the morphisms counted in Question \ref{main_question} are pairwise distinct, have ramification index exactly $d_i$ at $p_i$, and have ramification index at most 2 away from $p_1,p_2,\ldots,p_{n+m}$. 
\end{cor}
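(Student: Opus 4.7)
The plan is to invoke Proposition \ref{expected_dim} in the contrapositive for each forbidden behavior: one augments the ramification data slightly, producing a moduli space of expected dimension $\rho'-1=-1$ which must then be empty. Under (\ref{total_ramification_conditions}), Proposition \ref{expected_dim} applied with $(a^i_0,a^i_1)=(0,d_i)$ gives expected dimension exactly zero for the moduli space $\cG$ of tuples appearing in Question \ref{main_question}, so any tightening of a single ramification condition by one unit drops the dimension below zero.

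For the claim that the ramification index at $p_i$ is exactly $d_i$: if it were at least $d_i+1$ for some counted $f$, then this tuple would lie in the moduli space obtained from $\cG$ by replacing $(0,d_i)$ with $(0,d_i+1)$ at $p_i$. That space has expected dimension $\rho'-1=-1$ and so is empty by Proposition \ref{expected_dim}, a contradiction. (The case $d_i=d$ is vacuous, since a degree-$d$ map has ramification index at most $d$.) For the claim that no ramification of index at least $3$ occurs away from the $p_i$: an offending point $q\in C\setminus\{p_1,\ldots,p_{n+m}\}$ can be promoted to an $(m+1)$-st moving point carrying the condition $(0,3)$, and the augmented moduli space has expected dimension $\rho'-(3-2)=-1$, again empty.

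For pairwise distinctness of the counted tuples, the plan is to appeal to the étaleness statement underlying Proposition \ref{expected_dim}: the Hurwitz space of covers with prescribed ramification profile is étale over the moduli of branch divisors, so $\cG$ is smooth of dimension zero, hence reduced with pairwise distinct closed points, and the count is a genuine set-theoretic count. The main point to verify carefully will be the ``promotion'' step in the argument for ramification away from the $p_i$, where one must confirm that $q\notin\{p_1,\ldots,p_{n+m}\}$ produces a bona fide instance of Proposition \ref{expected_dim} with $m+1$ moving points disjoint from the $n$ fixed ones; this is immediate from the disjointness hypothesis on $q$, but should be recorded explicitly, since a coincidence $q=p_i$ would fall outside the setup of that proposition and would have to be absorbed into the preceding ``exactly $d_i$'' argument instead.
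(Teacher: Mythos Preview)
Your proposal is correct and is precisely the argument the paper intends: the corollary is stated without proof as an immediate consequence of Proposition~\ref{expected_dim}, and your three applications (bumping a single $d_i$ by one, adjoining a new moving point with condition $(0,3)$, and invoking the \'etaleness of the Hurwitz space for reducedness) are exactly the steps that make this inference explicit.
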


\begin{prop}\label{number_of_moving_points}
Suppose that (\ref{total_ramification_conditions}) holds. Then, $m\le 3g$.
\end{prop}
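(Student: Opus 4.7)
The plan is the brief Riemann-Hurwitz comparison hinted at in the parenthetical remark following (\ref{total_ramification_conditions}). First I would rearrange (\ref{total_ramification_conditions}) to isolate the total ``forced'' contribution to the ramification divisor: writing $(d_i - 2) = (d_i - 1) - 1$ for the indices $i = n+1, \ldots, n+m$, the identity becomes
\begin{equation*}
\sum_{i=1}^{n+m}(d_i - 1) = g + 2(d-g-1) + m = 2d - g - 2 + m.
\end{equation*}

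Next I would apply the Riemann-Hurwitz formula to any degree-$d$ morphism $f: C \to \bP^1$ satisfying the ramification conditions of Question \ref{main_question}, which gives $\sum_{p \in C}(e_p - 1) = 2g - 2 + 2d$. Since the $p_i$ are pairwise distinct and each $p_i$ contributes at least $d_i - 1 \geq 1$ to this sum (by the hypothesis $d_i \geq 2$), restricting the sum to the $p_i$ yields
\begin{equation*}
\sum_{i=1}^{n+m}(d_i - 1) \le 2g - 2 + 2d.
\end{equation*}
Comparing this inequality with the displayed equality above immediately produces $m \le 3g$.

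The only subtlety is that Riemann-Hurwitz tacitly requires at least one such cover to exist; if none does then the answer to Question \ref{main_question} is zero, and the claim $m\le 3g$ should be read as a necessary condition for the associated moduli problem (the space $\cG$ of Proposition \ref{expected_dim}) to be nonempty. I do not anticipate any real obstacle, as the proof is a one-line numerical comparison.
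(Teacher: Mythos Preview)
Your argument is correct and is essentially identical to the paper's proof: both rewrite (\ref{total_ramification_conditions}) as $\sum_{i=1}^{n+m}(d_i-1)=g+2(d-g-1)+m$, bound the left side above by $2d+2g-2$ via Riemann--Hurwitz, and rearrange. Your caveat about existence is reasonable but the paper treats the statement as a purely numerical necessary condition and does not raise it.
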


\begin{proof}
By Riemann-Hurwitz, we have
\begin{align*}
2d+2g-2&\ge\sum_{i=1}^{n+m}(d_i-1)\\
&=g+2(d-g-1)+m,
\end{align*}
where we have applied (\ref{total_ramification_conditions}) in the second line. Rearranging yields $m\le 3g$.
\end{proof}

By the last part of Corollary \ref{enumerated_covers_are_generic}, we may add additional moving points $p_i$ with $d_i=2$, where $m+n+1\le i\le m+3g$, without changing condition \ref{total_ramification_conditions}. From the proof of Proposition \ref{number_of_moving_points}, $f:C\to\bP^1$ is unramified away from the $p_i$. With these additional moving points, the answer to Question \ref{main_question} is multiplied by a factor of $(3g-m)!$, the number of ways to label the additional simple ramification points. We will therefore assume throughout the rest of the paper that $m=3g$, and that all ramification of $f$ occurs at the $p_i$.

We will also need a version of Proposition \ref{expected_dim} for pencils with restricted underlying line bundle. For simplicity, we stick to the following special case.

\begin{prop}\label{expected_dim_fixedL}
Let $E$ be a smooth curve of genus 1. Let $d\ge2$ be an integer, and let $(a^i_0,a^i_1)$, $i=1,2,\ldots,m$ be ordered pairs of integers satisfying $0\le a^i_0<a^i_1\le d$ and $a^i_1>1$ for $i=1,2,\ldots,m$. Let $\cG_{\cL}$ be the moduli space of tuples $(V,p_1,\ldots,p_{m})$, where the $p_i\in E$ are pairwise distinct points, and $V$ is a pencil with vanishing sequence at least $(a^i_0,a^i_1)$ at $p_i$ for $i=1,2,\ldots,m$, and the underlying line bundle of $V$ is isomorphic to $\cL$. Then, $\cG_{\cL}$ is pure of the expected dimension
\begin{equation*}
    \rho'=g+2(d-g-1)-\sum_{i=1}^{m}(a^i_0+a^i_1-2)-1.
\end{equation*}
In particular, if $\rho'<0$, then $\cG_{\cL}$ is empty.
\end{prop}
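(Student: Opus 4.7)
My plan is to deduce Proposition \ref{expected_dim_fixedL} from Proposition \ref{expected_dim} by exploiting the translation action of $E$ on itself and the induced transitive action on $\Pic^d(E)$. Let $\wt{\cG}$ denote the analogous moduli space parametrizing $(V, p_1, \ldots, p_m)$ with the same vanishing conditions, but with no restriction on the underlying line bundle of $V$; the fibre of the forgetful map $\pi: \wt{\cG} \to \Pic^d(E)$ over $\cL$ is precisely $\cG_\cL$.

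I first show that $\wt{\cG}$ is pure of dimension $\rho' + 1$ by projecting $\wt{\cG} \to E$ onto the first marked point. The fibre over any $q \in E$ is the moduli space of tuples $(V, p_2, \ldots, p_m)$ on the pointed elliptic curve $(E, q)$ with $p_2, \ldots, p_m$ moving and the vanishing condition at $q$ now treated as a fixed-point condition. This is a space to which Proposition \ref{expected_dim} applies with $n = 1$, and a direct computation shows that the expected dimension there is $1 + 2(d-2) - (a^1_0 + a^1_1 - 1) - \sum_{i=2}^m(a^i_0 + a^i_1 - 2) = \rho'$. Because $E$ is homogeneous under its own translation action, $(E, q)$ is a general pointed elliptic curve for every $q$ once $E$ is general, so each fibre is pure of dimension $\rho'$, whence $\wt{\cG}$ is pure of dimension $\rho' + 1$.

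Next, I use the $E$-equivariance of $\pi$ to conclude. The action of $E$ on $\Pic^d(E)$ given by $\cL' \mapsto t_p^*\cL'$ is transitive, since multiplication by $d$ on the abelian variety $E$ is surjective. Thus $\pi$ is surjective and all of its fibres are abstractly isomorphic. Pulling $\pi$ back along the étale map $E \to \Pic^d(E)$, $p \mapsto t_p^*\cL$, trivializes it: the fibre product is canonically isomorphic to $\cG_\cL \times E$ via the translation $(V, p_1, \ldots, p_m, p) \mapsto (t_{-p}^*V, p_1+p, \ldots, p_m+p, p)$, and it maps étale of degree $d^2$ onto $\wt{\cG}$. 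Hence $\cG_\cL \times E$ shares the pure dimension $\rho' + 1$ of $\wt{\cG}$, so $\cG_\cL$ is pure of dimension $\rho'$, as claimed. The only delicate point is the uniform applicability of Proposition \ref{expected_dim} to $(E, q)$ for every $q \in E$, which is guaranteed by the translation homogeneity of $E$.
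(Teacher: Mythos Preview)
Your argument is correct and follows essentially the same approach as the paper: both use translation on $E$ to relate $\cG_\cL$ to the moduli space $\cG$ with one fixed marked point (to which Proposition~\ref{expected_dim} applies), observing that the discrepancy is an $E[d]$-torsor. The paper constructs the map $\varphi:\cG_\cL\to\cG$ directly, whereas you factor through the intermediate space $\wt{\cG}$ and base-change along $E\to\Pic^d(E)$, but the content is identical.
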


\begin{proof}
Fix a general point $p'_1\in E$. Let $\cG$ be the moduli space of tuples $(V,p_2,\ldots,p_m)$ with the same vanishing conditions as before at $p_2,\ldots,p_m$, and the vanishing conditions at $p_1$ imposed at $p'_1$, with no condition on the underlying line bundle of $V$. By Proposition \ref{expected_dim}, $\cG$ is pure of the expected dimension $\rho'$.

We have a map $\varphi:\cG_{\cL}\to\cG$ sending $(V,p_1,\ldots,p_m)$ to $t_{p_1}^{*}(V,p_2,\ldots,p_m)$, where $t_{p_1}$ denotes the translation by $p_1$ according to the group law on the elliptic curve $(E,p'_1)$. We have that $\varphi$ is a $E[d]$-torsor, where $E[d]$ denotes the group of $d$-torsion points of $E$: indeed, if $\cL'$ is the underlying line bundle of $V$, then
\begin{equation*}
\varphi^{-1}(V,p_2,\ldots,p_m)\cong\{p_1\in E|t_{p_1}^{*}\cL'\cong\cL\}.
\end{equation*}
In particular, $\dim(\cG_{\cL})=\dim(\cG)=\rho'$.
\end{proof}

\vskip 3pt

\subsection{Schubert Calculus}

Let $V$ be a vector space of dimension $n$, and fix a complete flag $0=V_0\subset V_1\subset\cdots\subset V_n=V$, where $\dim V_k=k$. On the Grassmannian $\Gr(2,n)$, let $\sigma_{a,b}\in A^{a+b}(\Gr(2,n))$ denote the class of the subscheme parametrizing two-dimensional subspaces $W\subset V$ satisfying $W\cap V_{n-1-a}\neq\{0\}$ and $W\subset V_{n-b}$. As is conventional, we denote $\sigma_a=\sigma_{a,0}$. The classes $\sigma_{a,b}$, where $0\le b\le a\le n-2$, form a $\bZ$-basis for the Chow ring $A^{*}(\Gr(2,n))$.

The following is a consequence of the Pieri Rule and Hook Length Formula:

\begin{lem}\label{LR_coeffs}
We have
\begin{equation*}
\sigma_1^{k}=\sum_{a+b=k}c_{a,b}\sigma_{a,b},
\end{equation*}
where
\begin{equation*}
c_{a,b}=\binom{a+b}{a}\cdot\frac{a-b+1}{a+1}
\end{equation*}
is the number of Standard Young Tableaux (SYT) of shape $(a,b)$.
\end{lem}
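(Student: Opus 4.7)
The strategy is the standard two-step one: use Pieri's rule to reduce the computation of $\sigma_1^k$ to a sum of Schubert classes $\sigma_{a,b}$ weighted by counts of standard Young tableaux, and then evaluate those counts via the hook length formula.

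First, I would apply Pieri's rule, which in the special case of multiplication by $\sigma_1$ reads
$$\sigma_1 \cdot \sigma_{a,b} = \sigma_{a+1,b} + \sigma_{a,b+1},$$
with the convention that a term on the right is dropped whenever its indices fail to form a partition of length at most $2$ (and, if one wishes to be pedantic, of first part at most $n-2$; this is harmless when $n$ is large enough for the Schubert classes occurring in the final formula to be nonzero, which is what is needed). Iterating starting from $\sigma_{0,0}=1$ and expanding $\sigma_1^k$, the coefficient of $\sigma_{a,b}$ becomes the number of chains of partitions $\emptyset = \lambda^{(0)} \subset \lambda^{(1)} \subset \cdots \subset \lambda^{(k)} = (a,b)$, each step adding a single box. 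By definition this count is the number $f^{(a,b)}$ of standard Young tableaux of shape $(a,b)$.

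Second, I would apply the hook length formula
$$f^{(a,b)} = \frac{(a+b)!}{\prod_{c \in (a,b)} h(c)}.$$
For a two-row partition the hooks are directly enumerable: in the first row, the leftmost $b$ cells have hooks $a+1, a, \ldots, a-b+2$ and the remaining $a-b$ cells have hooks $a-b, a-b-1, \ldots, 1$; the second row contributes hooks $b, b-1, \ldots, 1$. A short simplification gives $\prod_c h(c) = (a+1)!\, b!/(a-b+1)$, so
$$f^{(a,b)} = \frac{(a+b)!\,(a-b+1)}{(a+1)!\, b!} = \binom{a+b}{a} \cdot \frac{a-b+1}{a+1} = c_{a,b},$$
which is the identity claimed.

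Both steps are routine; the only real care required is the hook-length bookkeeping in the second step, and there is no substantive conceptual obstacle.
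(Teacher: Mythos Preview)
Your proposal is correct and follows exactly the approach indicated in the paper, which simply states that the lemma is a consequence of the Pieri Rule and the Hook Length Formula without giving further details. Your write-up fills in those details accurately, including the hook-length bookkeeping for the two-row shape.
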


We also have the following generating function formula for the $c_{a,b}$:

\begin{lem}\label{catalan_gf}
For $t\ge1$, we have
\begin{equation*}
f_{t}(z)=\sum_{m_i=0}^{\infty}c_{t+m_{i}-1,m_{i}}z^{m_{i}}=\left(\frac{1-\sqrt{1-4z}}{2z}\right)^t.
\end{equation*}
\end{lem}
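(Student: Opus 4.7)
The plan is to recognize this as the classical formula for powers of the Catalan generating function and to prove it by Lagrange inversion. Writing $C(z)=\frac{1-\sqrt{1-4z}}{2z}=\sum_{n\ge 0}C_nz^n$, the function we want to expand is $f_t(z)=C(z)^t$. The Catalan generating function satisfies the algebraic relation $C(z)=1+zC(z)^2$, which, after the substitution $u=C(z)-1$, rewrites as
\begin{equation*}
u=z(1+u)^2,
\end{equation*}
exactly the Lagrange form $u=z\phi(u)$ with $\phi(u)=(1+u)^2$. This is the key structural input.

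Next, apply the Lagrange inversion formula to $F(u)=(1+u)^t=C(z)^t$. Using the standard statement $[z^m]F(u(z))=\tfrac{1}{m}[u^{m-1}]F'(u)\phi(u)^m$, I compute
\begin{equation*}
[z^m]C(z)^t=\frac{1}{m}[u^{m-1}]\,t(1+u)^{t-1}(1+u)^{2m}=\frac{t}{m}\binom{t+2m-1}{m-1}.
\end{equation*}
The routine binomial identity $\binom{t+2m-1}{m-1}=\frac{m}{t+m}\binom{t+2m-1}{m}$ then converts this to $\frac{t}{t+m}\binom{t+2m-1}{m}$, which is exactly $c_{t+m-1,m}$ as given in Lemma \ref{LR_coeffs}. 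Summing over $m$ yields the claimed closed form for $f_t(z)$.

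There is no serious obstacle here; the only real step is recognizing the Catalan equation $C=1+zC^2$ and applying Lagrange inversion cleanly. Alternatively, one could argue inductively on $t$ via $f_{t+1}(z)=C(z)f_t(z)$ and verify the resulting Vandermonde-type identity for the $c_{a,b}$'s, but Lagrange inversion gives the quickest route and has the advantage of explaining why the exponent $t$ appears as a prefactor in $c_{t+m-1,m}=\frac{t}{t+m}\binom{t+2m-1}{m}$.
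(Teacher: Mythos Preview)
Your proof via Lagrange inversion is correct. One tiny omission: the Lagrange formula you quote computes $[z^m]C(z)^t$ only for $m\ge1$; you should note separately that $[z^0]C(z)^t=1=c_{t-1,0}$, which is immediate.

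The paper's argument is genuinely different. It proceeds by induction on $t$: the cases $t=1,2$ reduce to the Catalan generating function, and for $t\ge3$ one removes the largest entry from a SYT of shape $(t+m-1,m+1)$ to obtain the recurrence $c_{t+m-1,m}=c_{t+m-1,m+1}-c_{t+m-2,m+1}$, which translates into $f_t(z)=\bigl(f_{t-1}(z)-f_{t-2}(z)\bigr)/z$. The induction then closes because $C(z)$ satisfies $zC^2-C+1=0$, so $C^{t-2}(C-1)/z=C^t$. Your Lagrange-inversion proof is shorter and, as you observe, gives a structural reason for the prefactor $t$ in $c_{t+m-1,m}=\frac{t}{t+m}\binom{t+2m-1}{m}$. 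The paper's route is more elementary and ties the identity directly to the SYT interpretation of $c_{a,b}$ already in play from Lemma~\ref{LR_coeffs}, which is natural given how these numbers are used later. Both arguments ultimately rest on the same algebraic fact $C=1+zC^2$.
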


\begin{proof}
We proceed by induction on $t$. When $t=1$, we have that $c_{m_{i},m_{i}}$ is the Catalan number $C_{m_{i}}$, and 
\begin{equation*}
\sum_{m_i=0}^{\infty}c_{m_{i},m_{i}}z^{m_{i}}=\frac{1-\sqrt{1-4z}}{2z},
\end{equation*}
see, for example, \cite[Example 6.2.6]{stanley}. When $t=2$, we have that $c_{m_{i}+1,m_{i}}$ is the Catalan number $C_{m_{i}+1}$, so
\begin{align*}
\sum_{m_i=0}^{\infty}c_{m_{i}+1,m_{i}}z^{m_{i}}&=\frac{1}{z}\left(\frac{1-\sqrt{1-4z}}{2z}-1\right)\\
&=\left(\frac{1-\sqrt{1-4z}}{2z}\right)^2.
\end{align*}
When $t\ge3$, we have
\begin{equation*}
c_{t+m_{i}-1,m_{i}}=c_{t+m_{i}-1,m_{i}+1}-c_{t+m_{i}-2,m_{i}+1},
\end{equation*}
as a SYT of shape $(t+m_{i}-1,m_{i}+1)$ has its largest entry in the right-most box of either the top or bottom row. Therefore,
\begin{equation*}
f_{t}(z)=\frac{f_{t-1}(z)-1}{z}-\frac{f_{t-2}(z)-1}{z},
\end{equation*}
as $c_{n,0}=1$ for all $n$. The lemma now follows from the fact that $\alpha=\frac{1-\sqrt{1-4z}}{2z}$ satisfies the quadratic equation $z\alpha^2-\alpha+1=0$.
\end{proof}

\vskip 4pt

\section{The weighted count in genus 1: Proof of Theorem \ref{genus1_weighted_count}}\label{weighted_section}

In this section, we consider Question \ref{main_question} in the case $g=n=1$, so that $m=3$: we refer to the fixed curve as $(E,p_1)$ to emphasize that its genus is 1. Fix integers $d,d_1,d_2,d_3,d_4$ such that $2\le d_i\le 2d-2$ and $d_1+d_2+d_3+d_4=2d+4$ (we comment on the additional boundary cases allowed in Theorem \ref{genus1_weighted_count} at the end of this section). 

\subsection{The weighted count $\wt{N}_{d_1,d_2,d_3,d_4}$}

Let us now define the weighted count appearing in Theorem \ref{genus1_weighted_count}.

\begin{defn}\label{pencil_weight}
We define $\wt{N}_{d_1,d_2,d_3,d_4}$ to be the number of 4-tuples $(p_2,p_3,p_4,V)$, where $p_i\in E$ are pairwise distinct points, and $V$ is a pencil with total vanishing at least (and thus, by Corollary \ref{enumerated_covers_are_generic}, exactly) $d_i$ at $p_i$ for $i=1,2,3,4$, such that if $V$ is a pencil with vanishing sequence $(k_i,d_i-k_i)$ at $p_i$, then $V$ is counted with multiplicity
\begin{equation*}
C^{d_1,d_2,d_3,d_4}_{k_1,k_2,k_3,k_4}=\prod_{i=1}^{4}c_{d_i-k_i-1,k_i}.
\end{equation*}
Here, the $c_{a,b}$ are as in Lemma \ref{LR_coeffs}.
\end{defn}

\begin{rem}
We digress here to illustrate the role of the weights in Definition \ref{pencil_weight} in genus 0. Consider the \textit{weighted} number of pencils with \textit{total vanishing} $d_i$ at general points $p_1,\ldots,p_n$ on $\bP^1$, with weights defined analogously as in Definition \ref{pencil_weight}. By Theorem \ref{genus0_count}, this is
\begin{align*}
&\sum_{0\le k_i<d_i/2}\int_{\Gr(2,d+1)}\prod_{i=1}^{n}c_{d_i-k_i-1,k_i}\sigma_{d_i-k_i-1,k_i}\\
=&\int_{\Gr(2,d+1)}\sum_{0\le k_i<d_i/2}\prod_{i=1}^{n}c_{d_i-k_i-1,k_i}\sigma_{d_i-k_i-1,k_i}\\
=&\int_{\Gr(2,d+1)}\prod_{i=1}^{n}\sum_{0\le k_i<d_i/2}c_{d_i-k_i-1,k_i}\sigma_{d_i-k_i-1,k_i}\\
=&\int_{\Gr(2,d+1)}\prod_{i=1}^{n}\sigma_1^{d_i-1}\\
=&\int_{\Gr(2,d+1)}\sigma_1^{2d-2}\\
=&C_{d-1},
\end{align*}
Thus, the weighted count of pencils produces a considerably simpler answer than the unweighted count of base-point free pencils; we will find a similar phenomenon in genus 1. More generally, in the weighted setting, vanishing conditions at multiple fixed points may be combined in to a vanishing condition at a single point, see Proposition \ref{consolidate_fixed_points}.
\end{rem}

\subsection{Outline of Proof}

We briefly summarize the method to compute $\wt{N}_{d_1,d_2,d_3,d_4}$. We first change the problem slightly: fix a line bundle $\cL$ on $E$. Up to a factor of $d^2$, it suffices to enumerate pencils on $E$ with underlying line bundle $\cL$ and the same ramification conditions, but where $p_1$ is allowed to move (Proposition \ref{fixL_instead}).

We then work on the parameter space $T=\Gr(2,H^0(\cL))\times E_1\times E_2\times E_3\times E_4$, where the $E_i$ are all isomorphic to $E$. We would like to consider the locus of 5-tuples $(V,p_1,p_2,p_3,p_4)$ where $V$ is ramified to order $d_i$. The main difficulty is to remove the excess loci where the $p_i$ become equal to each other: we do this as follows. First, let $T_1$ be the (closure of the) codimension 1 locus where $V$ is simply ramified at $p_1$; its class is expressed using Porteous's formula. We show in Lemma \ref{contact_locus} that the locus $T_{d_1-1}$ where $T_1$ has contact order at least $d_1-1$ with $E_1$ is, set-theoretically, the locus where $V$ has \textit{total vanishing} at least $d_1$ at $p_1$. Moreover, we show in Lemma \ref{principal_parts} that the components of $T_{d_1-1}$ parametrizing pencils with vanishing sequence at least $(k_1,d_1-k_1)$ appear with multiplicity $c_{d_1-k_1-1,k_1}$, as defined in Lemma \ref{LR_coeffs}.

Next, on $T_{d_1-1}$, we impose the condition of simple ramification at $p_2$, which defines a Cartier divisor $\wt{T}_{d_1-1,1}\subset T_{d_1-1}$. We find in Lemma \ref{D_2_formula} that $\wt{T}_{d_1,1}$ contains the diagonal locus $\Delta_{12}$, where $p_1=p_2$, with multiplicity $d_1-1$. The residual divisor $T_{d_1-1,1}=\wt{T}_{d_1-1,1}-(d_1-1)\Delta_{12}$ is the closure of the locus of $(V,p_1,p_2,p_3,p_4)$ where $p_1\neq p_2$, and $V$ has total vanishing at least $d_1$ at $p_1$ and at least 2 at $p_2$.

As in the construction of $T_{d_1-1}$, we now let $T_{d_1-1,d_2-1}$ be the locus on $T_{d_1-1}$ where $T_{d_1-1,1}$ intersects $E_2$ with multiplicity at least $d_2-1$. Set-theoretically, $T_{d_1-1,d_2-1}$ is the locus where $V$ has total vanishing at least $d_i$ at $p_i$ for $i=1,2$. We then repeat this procedure at $p_3,p_4$.

In the end, we obtain the zero-dimensional subscheme $T_{d_1-1,d_2-1,d_3-1,d_4-1}\subset\Gr(2,H^0(\cL))\times E_1\times E_2\times E_3\times E_4$ where $V$ has total vanishing at least $d_i$ at $p_i$, for $i=1,2,3,4$. The theory of limit linear series guarantees that $T_{d_1-1,d_2-1,d_3-1,d_4-1}$ is disjoint from all diagonals, as we see in Lemmas \ref{ram_points_collide} and \ref{no_excess_pencils}. The multiplicity of a component of $T_{d_1-1,d_2-1,d_3-1,d_4-1}$ is exactly its weight, as defined in Definition \ref{pencil_weight}, and integrating the class of $T_{d_1-1,d_2-1,d_3-1,d_4-1}$ over $T$ yields Theorem \ref{genus1_weighted_count}.

\vskip 3pt

\subsection{Pencils with fixed underlying line bundle}

For the rest of this section, we fix a line bundle $\cL$ of degree $d$ on $E$. Let $G=\Gr(2,H^0(E,\cL))\cong\Gr(2,d)$.

\begin{prop}\label{fixL_instead}
$\wt{N}_{d_1,d_2,d_3,d_4}$ is equal to the product of $1/d^2$ and the weighted number of 5-tuples $(V,p'_1,p_2,p_3,p_4)$, where $p'_1,p_2,p_3,p_4\in E$ are pairwise distinct, and $V$ is a pencil on $E$ with underlying line bundle $\cL$ and with total vanishing $d_1$ at $p'_1$ and $d_i$ at $p_i$ for $i=2,3,4$. Here, the weighting in the latter count is the same as in the definition of $\wt{N}_{d_1,d_2,d_3,d_4}$.
\end{prop}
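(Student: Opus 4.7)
The plan is to use the translation action of $E$ on itself to trade the fixed marked point $p_1$ for a fixed underlying line bundle $\cL$: as $p'_1$ varies, the pullback of $\cL$ under translation will sweep over $\Pic^d(E)$ with multiplicity $d^2$, which is exactly the factor that appears.

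Concretely, fix the group law on $E$ with origin $p_1$, and for $q\in E$ write $\tau_q\colon E\to E$ for the translation $x\mapsto x+q$. I would first define the natural map from the right-hand set of 5-tuples to the set of 4-tuples contributing to $\wt{N}_{d_1,d_2,d_3,d_4}$. Given $(V',p'_1,p_2,p_3,p_4)$ with $V'$ of underlying line bundle $\cL$ and the prescribed total vanishing, set $q=p'_1-p_1$ and let $V:=\tau_q^*V'$, $\bar p_i:=p_i-q$ for $i=2,3,4$. Because $\tau_q$ is an isomorphism, pullback by $\tau_q$ preserves the full vanishing sequence pointwise, so $V$ has vanishing sequence $(k_1,d_1-k_1)$ at $p_1=\tau_q^{-1}(p'_1)$ matching that of $V'$ at $p'_1$, and similarly $(k_i,d_i-k_i)$ at $\bar p_i$. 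The weight $C^{d_1,d_2,d_3,d_4}_{k_1,k_2,k_3,k_4}$ depends only on these sequences, so weights are preserved; and pairwise distinctness of $(p'_1,p_2,p_3,p_4)$ is equivalent to that of $(p_1,\bar p_2,\bar p_3,\bar p_4)$ since $\tau_q$ is a bijection.

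The remaining step is to show this assignment is exactly $d^2$-to-one. Given $(\bar p_2,\bar p_3,\bar p_4,V)$ counted by $\wt{N}_{d_1,d_2,d_3,d_4}$, with $V$ having underlying line bundle $\cL''$, a preimage corresponds to a choice of $q\in E$ such that $\tau_q^*\cL\cong\cL''$ (then $p'_1=p_1+q$, $p_i=\bar p_i+q$, $V'=\tau_{-q}^*V$). The homomorphism $\phi_{\cL}\colon E\to\Pic^0(E)$, $q\mapsto\tau_q^*\cL\otimes\cL^{-1}$, is the standard polarization map attached to $\cL$; since $\cL$ has degree $d$ on an elliptic curve, $\phi_{\cL}$ is an isogeny of degree $d^2$ with kernel $E[d]$. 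Hence the fiber of $q\mapsto\tau_q^*\cL$ over any $\cL''\in\Pic^d(E)$ has cardinality exactly $d^2$, so the right-hand weighted count equals $d^2\cdot\wt{N}_{d_1,d_2,d_3,d_4}$. I do not anticipate a real obstacle: the only input beyond formal bookkeeping is the classical degree-$d^2$ statement for $\phi_{\cL}$.
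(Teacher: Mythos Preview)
Your proposal is correct and is essentially the same argument the paper gives: the paper's one-line proof invokes Proposition~\ref{expected_dim_fixedL}, whose proof constructs exactly your translation map $\varphi\colon\cG_{\cL}\to\cG$, $(V,p_1,\ldots,p_m)\mapsto t_{p_1}^{*}(V,p_2,\ldots,p_m)$, and observes that it is an $E[d]$-torsor (i.e.\ $d^2$-to-one). The only cosmetic difference is that you identify the fiber size via the polarization isogeny $\phi_{\cL}$ of degree $d^2$, whereas the paper phrases it as $\varphi^{-1}(V,\ldots)=\{p_1:t_{p_1}^{*}\cL'\cong\cL\}$ being an $E[d]$-coset; these are the same computation.
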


\begin{proof}
This is immediate from the proof of Proposition \ref{expected_dim_fixedL}, as the fibers of $\varphi:\cG_{\cL}\to\cG$ have size $d^2$ when the (expected) dimension of the source and target are both equal to zero.
\end{proof}

In light of Proposition \ref{fixL_instead}, we will drop the fixed point $p_1$ from $E$, and by abuse of notation, count 5-tuples $(V,p_1,p_2,p_3,p_4)$ where $p_1$ is allowed to move, but the underlying line bundle of $V$ is constrained to be isomorphic to $\cL$, that is, $V\subset H^0(\cL)$.

\vskip 3pt

\subsection{The ramification loci on $G\times E$}\label{ram_loci_section}

\begin{defn}
For non-negative integers $0\le b\le a\le d-1$, let $\Sigma_{a,b}\subset G\times E$ be the closed subscheme parametrizing pairs $(V,p)$ where $V\subset H^0(\cL)$ is a pencil and $p\in E$ is a point at which the vanishing sequence of $V$ is at least $(b,a+1)$. When $a,b$ fail to satisfy $0\le b\le a\le d-1$, we declare $\Sigma_{a,b}$ to be empty, and when $b=0$, we denote $\Sigma_a=\Sigma_{a,b}$.
\end{defn}

We construct $\Sigma_{a,b}$ as follows. Let
\begin{equation*}
    \cF_k=p_{2*}(p_1^{*}\cL\otimes\cO_{E\times E}/\cI_{\Delta}^{k}),
\end{equation*}
where $p_i:E\times E\to E$ are the projection maps and $\cI_{\Delta}$ is the ideal sheaf of the diagonal $\Delta\subset E\times E$. Note that $\cF_k$ is locally free of rank $k$. Let $\cV$ be the universal 2-plane bundle over $G=\Gr(2,H^0(\cL))$ and let $\pr_G,\pr_E$ be the projection maps from $G\times E$ to $G,E$, respectively. We have natural maps
\begin{equation*}
    \varphi_k:\pr_G^{*}\cV\to\pr_E^{*}\cF_k
\end{equation*}
evaluating sections of $\cL$ to order $k$. Then, $\Sigma_{a,b}$ is the scheme-theoretic intersection $M_0(\varphi_b)\cap M_1(\varphi_{a+1})$, where $M_i(\varphi_k)$ is the degeneracy locus where $\varphi_k$ has rank at most $i$.

\begin{lem}\label{Sigma_ab_properties}
\quad
\begin{enumerate}
\item[(a)] Suppose that $0\le b\le a\le d-2$. Then, $\Sigma_{a,b}$ is integral of the expected codimension $a+b$.
\item[(b)] Suppose that $0\le b\le a=d-1$. Then, $\Sigma_{a,b}$, as a set, is the disjoint union of Schubert cycles $\sigma_{d-2,b}$ on the fibers $G\times\{q\}$, for all $q$ such that $\cL\cong\cO_E(dq)$. In particular, $\Sigma_{a,b}$ again has the expected codimension $a+b$.
\end{enumerate}
\end{lem}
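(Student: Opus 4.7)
The plan is to identify $\Sigma_{a,b}$ as a family of Schubert varieties inside a relative Grassmannian bundle over $E$, and then to handle the two cases separately.

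First I would introduce the locally free rank-$(d-b)$ subbundle $\cW_b\subset H^0(\cL)\otimes\cO_E$ whose fiber at $p$ is $H^0(\cL(-bp))$; this is well-defined and locally free for $0\le b\le d-1$. The degeneracy locus $M_0(\varphi_b)$ is then precisely the relative Grassmannian $\Gr(2,\cW_b)\to E$, a smooth irreducible variety of dimension $2d-2b-3$ embedded in $G\times E$ as a smooth subscheme of codimension $2b$. All further analysis happens inside this relative Grassmannian.

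For part (a), i.e.\ $a\le d-2$, the subsheaf $\cW_{a+1}\subset\cW_b$ is still locally free, of rank $d-a-1\ge 1$. Inside $\Gr(2,\cW_b)$, the closed subscheme $\Sigma_{a,b}$ is then cut out by the relative Schubert condition that the tautological rank-$2$ subbundle meets $\cW_{a+1}$ non-trivially; this is a single Schubert condition of expected codimension $a-b$, so $\Sigma_{a,b}$ has expected codimension $2b+(a-b)=a+b$ in $G\times E$. To prove integrality, I would construct the Bott--Samelson style resolution
\begin{equation*}
\wt{\Sigma}_{a,b}=\{(p,L,V)\,:\,L\subset\cW_{a+1}|_p,\ L\subset V\subset\cW_b|_p\},
\end{equation*}
which is a $\bP^{d-b-2}$-bundle over the projective bundle $\bP(\cW_{a+1})\to E$, and hence smooth and irreducible. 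The forgetful map $\wt{\Sigma}_{a,b}\to\Sigma_{a,b}$ is proper and birational: generically $V\cap\cW_{a+1}|_p$ is one-dimensional and so determines $L$ uniquely. This yields irreducibility and the expected codimension of $\Sigma_{a,b}$. Reducedness then follows from the fact that $\Sigma_{a,b}$ is a degeneracy locus of expected codimension, and hence Cohen--Macaulay by Eagon--Northcott, combined with generic smoothness of the Schubert cycle on each fiber over $E$.

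For part (b), i.e.\ $a=d-1$, the sheaf $H^0(\cL(-dp))$ vanishes for generic $p$ and is one-dimensional precisely at the $d^2$ points $q\in E$ with $\cL\cong\cO_E(dq)$. So a point $(V,p)\in\Sigma_{d-1,b}$ forces $p=q$ for one such $q$, and then $V$ must contain the unique section $s_0\in H^0(\cL)$ vanishing to order $d$ at $q$, and must also lie in $\cW_b|_q=H^0(\cL(-bq))$. Working in $G\cong\Gr(2,d)$ with the two-step flag $\langle s_0\rangle\subset H^0(\cL(-bq))$, these two conditions cut out exactly the Schubert cycle $\sigma_{d-2,b}$, which has the expected codimension $a+b=d-1+b$. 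The main obstacle I anticipate is the scheme-theoretic reducedness in part (a); irreducibility and the set-theoretic/codimension description follow cleanly from the resolution, but the upgrade to \emph{integral} requires Cohen--Macaulayness of degeneracy loci of expected codimension. Part (b), by contrast, is essentially a direct verification once one observes that the $d$-th order vanishing condition at the moving point collapses the parameter space onto finitely many fibers $G\times\{q\}$.
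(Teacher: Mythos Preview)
Your proposal is correct. For part (a), the paper takes a shorter route: it simply observes that the scheme-theoretic fiber of $\Sigma_{a,b}$ over any $q\in E$ is the classical Schubert variety $\sigma_{a,b}$ in $G$ (with respect to the complete flag $H^0(\cL(-rq))$), which is integral of the expected codimension, and then deduces that the total space $\Sigma_{a,b}$ inherits these properties. Your approach---realizing $\Sigma_{a,b}$ as a relative Schubert variety inside $\Gr(2,\cW_b)$, building a Bott--Samelson type resolution for irreducibility, and invoking Cohen--Macaulayness of degeneracy loci for reducedness---is more explicit and makes the upgrade from ``generically reduced'' to ``reduced'' transparent. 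The paper's fiberwise argument is terser but, to be fully rigorous, implicitly relies on the same Cohen--Macaulayness (or on a general lemma about proper families over a smooth curve with integral equidimensional fibers) to pass from fiberwise integrality to global integrality. Your version has the advantage of spelling this out; the paper's has the advantage of making the identification with classical Schubert cycles immediate, which is what gets used downstream in Lemma~\ref{principal_parts}. For part (b), your argument and the paper's are essentially identical: both observe that a section vanishing to order $d$ forces $\cL\cong\cO_E(dq)$, collapsing the locus onto finitely many fibers where it becomes the Schubert cycle $\sigma_{d-2,b}$.
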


\begin{proof}
In both cases, Proposition \ref{expected_dim_fixedL} implies that $\Sigma_{a,b}$ has the expected codimension.

When $a<d-1$, the restriction to the fiber of $\pr_E:G\times E\to E$ over any $q\in E$ is the usual Schubert cycle $\sigma_{a,b}$ with respect to the flag consisting of the subspaces $H^0(E,\cL(-rq))\subset V$, $r=0,1,\ldots,d-1$, which is integral of the expected codimension. Therefore, $\Sigma_{a,b}$ has the same properties. 

When $a=d-1$, a section $s\in H^0(\cL)$ can only vanish at $q$ to order $d$ if $\cL\cong\cO_{E}(dq)$, in which case the condition of vanishing to order $d-1$ is equivalent to that of vanishing to order $d$. Part (b) follows.
\end{proof}

\begin{lem}\label{sigma1_int_multiplicity}
Fix $(V,q)\in G\times E$, and suppose that $V$ has vanishing sequence $(a_0,a_1)$ at $q$. Then, the multiplicity of the intersection of $\Sigma_{1}$ with $E_{V}=\pr_G^{-1}(V)$ is $a_0+a_1-1$. 
\end{lem}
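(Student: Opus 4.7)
The plan is to reduce to a local computation of the Wronskian of $V$ at $q$. Recall from the construction of $\Sigma_1$ that it is the Cartier divisor on $G\times E$ cut out by the determinant of $\varphi_2:\pr_G^{*}\cP\to\pr_E^{*}\cF_2$, a map between rank 2 vector bundles. Pulling back this section of $\det(\pr_E^{*}\cF_2)\otimes\det(\pr_G^{*}\cP)^{\vee}$ along the inclusion $E_V\hookrightarrow G\times E$ produces a section of a line bundle on $E_V\cong E$, whose order of vanishing at $q$ is exactly the multiplicity we wish to compute. Unraveling the definition of $\cF_2$, this section sends a point $q'\in E$ (with local coordinate $x$) to
\begin{equation*}
W(q')=\det\begin{pmatrix}s_1(q') & s_1'(q') \\ s_2(q') & s_2'(q')\end{pmatrix},
\end{equation*}
where $s_1,s_2$ is any basis of $V$; this is the classical Wronskian (changing the basis of $V$ only rescales $W$ by a nonzero constant and so does not affect orders of vanishing).

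To evaluate $\mathrm{ord}_q W$, choose a basis $s_1,s_2\in V$ adapted to the vanishing sequence at $q$, so that in a local analytic coordinate $x$ around $q$ one has $s_i(x)=x^{a_i}u_i(x)$ with $u_i$ a unit at $x=0$. A direct computation then gives
\begin{equation*}
W(x)=(a_1-a_0)\,x^{a_0+a_1-1}u_1(x)u_2(x)+x^{a_0+a_1}\bigl(u_1(x)u_2'(x)-u_1'(x)u_2(x)\bigr).
\end{equation*}
Since $a_1>a_0\ge 0$ and we are in characteristic zero, the coefficient $a_1-a_0$ is nonzero, and $u_1u_2$ does not vanish at $x=0$. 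Hence $W$ vanishes to order exactly $a_0+a_1-1$ at $q$, which is the claimed multiplicity.

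The argument is essentially a Wronskian calculation, so there is no real obstacle; the only subtlety is confirming that the scheme structure on $\Sigma_1$ defined via the degeneracy locus of $\varphi_2$ agrees (after restricting to $E_V$) with the vanishing locus of this Wronskian section, but this is immediate from the fact that both $\cP$ and $\cF_2$ are locally free of rank 2, so the rank-$\le 1$ locus is the vanishing locus of the induced determinant section. The characteristic zero hypothesis enters precisely in ensuring $a_1-a_0\neq 0$ in $k$.
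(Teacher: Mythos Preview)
Your argument is correct and follows essentially the same route as the paper: both compute the restriction of $\Sigma_1$ to $E_V$ as the vanishing locus of the Wronskian of a local basis of $V$, and then read off the order of vanishing as $a_0+a_1-1$ since $a_1\neq a_0$. The paper simply takes $x^{a_0},x^{a_1}$ as the local basis rather than carrying the unit factors, but this is a cosmetic difference.
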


\begin{proof}
Let $x$ be an analytic local coordinate on $E_V$ near $q=V(x)$, so that $\cV|_{E_V}$ is freely generated by the sections $x^{a_0},x^{a_1}$. After restriction to $E_V$, we have that $\Sigma_1$ is the vanishing locus of
\begin{equation*}
\det
\begin{bmatrix}
x^{a_0} & x^{a_1}\\
a_0x^{a_0-1} & a_1x^{a_1-1}
\end{bmatrix}
=(a_1-a_0)x^{a_0+a_1-1},
\end{equation*}
which vanishes to order exactly $a_0+a_1-1$ at $q$ because $a_0\neq a_1$.
\end{proof}

\begin{defn}\label{contact_locus}
For integers $r\ge1$, Let $T_{r}$ be the (scheme-theoretic) locus of points $(V,q)\in G\times E$ where $\Sigma_1$ intersects $E_V$ with multiplicity at least $r$.
\end{defn}

We construct $T_r$ as follows. Let $\cW_r$ be the vector bundle of rank $r$ on $G\times E$ whose fiber over $(P,q)$ is 
\begin{equation*}
    H^0(E_P,\cO(\Sigma_1)|_{E_V})/\mathfrak{m}_{(V,q)}^{r}H^0(E_P,\cO(\Sigma_1)|_{E_V}).
\end{equation*}
Globally, 
\begin{equation*}
    \cW_r=p_{2*}(p_{1}^{*}\cO(\Sigma_1)\otimes\cO_{G\times E\times E}/\cI_{\Delta}^{r}),
\end{equation*}
where $p_{i}:G\times E\times E\to G\times E$ are the two projection maps and $\cI_{\Delta}$ is the ideal sheaf of the pullback of the diagonal under $G\times E\times E\to E\times E$. Then, the effective divisor $\Sigma_1$ defines a tautological section of $\cW_r$, and we define $T_r\subset G\times E$ to be the vanishing locus of this section. In particular, $T_1=\Sigma_1$.

As a set, Lemma \ref{sigma1_int_multiplicity} implies that $T_r$ is the locus where $V$ has total vanishing at least $r+1$. Thus, it is the union of the subschemes $\Sigma_{a,b}$ with $a+b=r$, and in particular has the expected codimension $r$. The following proposition identifies the scheme-theoretic multiplicities with which the $\Sigma_{a,b}$ appear in $T_r$.

\begin{lem}\label{principal_parts}
We have
\begin{equation}\label{sigma1_power_formula}
[\Sigma_1]^r=[T_r]=\sum_{a+b=r}c_{a,b}[\Sigma_{a,b}]
\end{equation}
in $A^{r}(G\times E)$, where the $c_{a,b}$ are as in Lemma \ref{LR_coeffs}.
\end{lem}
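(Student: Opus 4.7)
The lemma contains two equalities in $A^r(G\times E)$, namely $[\Sigma_1]^r=[T_r]$ and $[T_r]=\sum_{a+b=r}c_{a,b}[\Sigma_{a,b}]$, and my plan is to handle them in sequence.

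For the first, the strategy is to identify $[T_r]$ with the top Chern class $c_r(\cW_r)$ and then evaluate the latter. By Lemmas \ref{Sigma_ab_properties} and \ref{sigma1_int_multiplicity}, $T_r$ is set-theoretically $\bigcup_{a+b=r}\Sigma_{a,b}$, which has codimension exactly $r$; hence the tautological section of $\cW_r$ cut out by $\Sigma_1$ is regular, giving $[T_r]=c_r(\cW_r)$. To compute this, I would use the $\cI_\Delta$-adic filtration of $\cO_{G\times E\times E}/\cI_\Delta^r$, whose graded pieces are $\cI_\Delta^k/\cI_\Delta^{k+1}\cong\Sym^k\cN^\vee_\Delta$. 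The decisive geometric input is that the conormal bundle $\cN^\vee_\Delta$ is the pullback of $\Omega_E$, which is trivial since $E$ is elliptic; so every graded piece is isomorphic to $\cO_\Delta$. After tensoring by $p_1^{*}\cO(\Sigma_1)$ and applying $p_{2*}$---which is exact on sheaves supported on the thickened diagonal (finite over $G\times E$ via $p_2$) and restricts to the identity on $\Delta$---I obtain a filtration of $\cW_r$ by locally free subbundles with all $r$ successive quotients isomorphic to $\cO(\Sigma_1)$. Multiplicativity of total Chern classes then yields $c(\cW_r)=(1+[\Sigma_1])^r$, so $c_r(\cW_r)=[\Sigma_1]^r$.

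For the second equality, since $T_r$ is set-theoretically the union of the integral codimension-$r$ subvarieties $\Sigma_{a,b}$, its fundamental class decomposes as $[T_r]=\sum_{a+b=r}m_{a,b}[\Sigma_{a,b}]$ for some non-negative integer multiplicities $m_{a,b}$. To pin down $m_{a,b}$, I would restrict both sides of the already-established identity $[\Sigma_1]^r=[T_r]$ to a fiber $G\times\{q\}$ for a generic $q\in E$. By Lemma \ref{Sigma_ab_properties}(a) and flatness over a dense open of $E$, $[\Sigma_{a,b}]|_{G\times\{q\}}=\sigma_{a,b}$, while $[\Sigma_1]|_{G\times\{q\}}=\sigma_1$, since any Picard-part contribution from $E$ becomes trivial on a point. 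Combining this with the identity $\sigma_1^r=\sum c_{a,b}\sigma_{a,b}$ of Lemma \ref{LR_coeffs} and the $\bZ$-linear independence of the Schubert basis of $A^r(G)$ forces $m_{a,b}=c_{a,b}$.

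The main obstacle is the Chern class computation for $\cW_r$, in particular verifying that the $\cI_\Delta$-adic filtration survives $p_{2*}$ as a filtration of $\cW_r$ by subbundles with the predicted graded pieces. The triviality of $\Omega_E$ is the essential ingredient: in higher genus, the graded pieces would be twisted by positive powers of $\Omega_E$, breaking the clean formula $c(\cW_r)=(1+[\Sigma_1])^r$ and obstructing any direct adaptation of this argument.
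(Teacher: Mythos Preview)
Your argument for the first equality $[T_r]=c_r(\cW_r)=[\Sigma_1]^r$ is essentially the paper's: the $\cI_\Delta$-adic filtration has trivial graded pieces because $\Omega_E$ is trivial, so $c(\cW_r)=(1+[\Sigma_1])^r$, and the expected-codimension statement lets you read off $[T_r]=c_r(\cW_r)$.

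For the second equality, however, your restriction-to-fiber argument works only when $r\le d-2$. When $r\ge d-1$, the decomposition $[T_r]=\sum_{a+b=r}m_{a,b}[\Sigma_{a,b}]$ contains the term $m_{d-1,r-d+1}[\Sigma_{d-1,r-d+1}]$, and by Lemma~\ref{Sigma_ab_properties}(b) the cycle $\Sigma_{d-1,r-d+1}$ is supported over the finitely many $q\in E$ with $\cL\cong\cO_E(dq)$. It is therefore \emph{not} flat over any dense open of $E$, and its restriction to a generic fiber $G\times\{q\}$ is zero rather than a Schubert cycle (indeed $\sigma_{d-1,b}$ does not even exist in $A^*(\Gr(2,d))$). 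So restricting to a generic fiber determines the multiplicities $m_{a,b}$ only for $a\le d-2$, leaving $m_{d-1,r-d+1}$ completely unconstrained. Since in the applications one takes $r=d_i-1$ with $d_i$ as large as $2d-2$, this case cannot be ignored.

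The paper closes this gap with an additional geometric move: it passes to a $\Gr(2,d+1)$-bundle $\cG$ over $E$ with fiber $\Gr(2,H^0(E,\cL((r-d+2)\tau(q))))$ for a fixed nontrivial translation $\tau$, defines the $\Sigma_{a,b}$ there, and embeds $G\times E\hookrightarrow\cG$ by adding a base point of order $r-d+2$ at $\tau(q)\neq q$. On $\cG$ the inequality $r<\deg-1$ holds, so the fiber-restriction argument is valid there, and then one pulls the identity back along $\iota$, using that the $\Sigma_{a,b}$ are stable under this pullback. You would need to supply this step, or an equivalent one, to complete the proof.
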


\begin{proof}
Because $\cI_\Delta/\cI_{\Delta}^2\cong N^{\vee}_{\Delta/G\times E\times E}$ is trivial, we may filter $\cO_{G\times E\times E}/\cI^{r}_{\Delta}$ by $r$ trivial line bundle quotients on $G\times E\times E$. As $T_r$ has expected codimension, we get 
\begin{equation*}
[T_r]=c_r(\cW_r)=\{(1+[\Sigma_1])^{r}\}_r=[\Sigma_1]^r.
\end{equation*}
establishing the first equality.

By the set-theoretic description of $T_r$, we have
\begin{equation}\label{proto_sigma1_formula}
[T_r]=\sum_{a+b=r}c'_{a,b}[\Sigma_{a,b}]
\end{equation}
for some integers $c'_{a,b}>0$. We wish to show that $c'_{a,b}=c_{a,b}$ for all $a,b$. First, consider the case in which $r<d-1$. We restrict (\ref{proto_sigma1_formula}) to the fibers $G_q$ over points $q\in E$. As we have already seen, $\Sigma_1$ and the $\Sigma_{a,b}$ restrict to the usual Schubert cycles $\sigma_1$ and $\sigma_{a,b}$ with respect to the flag of sections of $\cL$ vanishing to varying orders at $q$, so $[T_r]$ restricts to $\sigma_1^r\in A^*(G_q)$. On the other hand, in $A^{r}(G_q)$, we have the formula
\begin{equation*}
    \sigma_1^r=\sum_{a+b=r}c_{a,b}\sigma_{a,b},
\end{equation*}
by definition. Because the $\sigma_{a,b}$ are linearly independent in $A^r(G_q)$, we conclude $c_{a,b}=c'_{a,b}$ for all $a,b$.

In the case $r\ge d-1$, the above argument fails because $\Sigma_{d-1,r-d+1}$ vanishes under pullback to $G_q$. We instead argue as follows. Fix a non-trivial translation $\tau$ on $E$. Let $\cG$ be the $\Gr(2,d+1)$-bundle over $E$ whose fiber over $q$ is $H^0(E,\cL((r-d+2)\tau(q)))$. On this bundle, we may define the cycles $\Sigma_{a,b}$ in terms of vanishing conditions at $q$ in exactly the same way as before. We then have a closed embedding $\iota:G\times E\to \cG$ over $E$, sending a pencil $(P,q)$ to the pencil $(P(\tau(q)),q)$ -- that is, $\iota$ adds a base point of order $r-d+2$ at $\tau(q)\neq q$ to $P$, increasing the degree of the underlying line bundle by the same amount. 

We then obtain the formula (\ref{sigma1_power_formula}) on $\cG$ in the same way we did above, as we now have $r<\deg(\cL(r-d+2)\tau(q))-1$. The cycles $\Sigma_{a,b}$ are stable under pullback by $\iota$, so we then obtain the same formula (\ref{sigma1_power_formula}) on $G\times E$, as desired.
\end{proof}

\vskip 3pt

\subsection{Imposing ramification at additional points}\label{impose_additional_ram}
We now impose the vanishing conditions at the points $p_2,p_3,p_4$ one at a time. We work on the subscheme $T_{d_1-1}\times E_2\subset G\times E_1\times E_2$, where the subscripts denote different copies of $E$ parametrizing the $p_i$.

\begin{defn}
Let $\wt{T}_{d_1-1,1}\subset T_{d_1-1}\times E_2$ denote the subscheme parametrizing $(V,p_1,p_2)$ where $(V,p_1)\subset T_{d_1-1}$, and additionally $V$ has total vanishing at least $2$ at $p_2$. 
\end{defn}

More precisely, we construct $\wt{T}_{d_1-1,1}$ by repeating the construction of $T_1\subset G\times E_1$ on $G\times E_2$, and pulling back to $T_{d_1-1}\times E_2$. As a set, $\wt{T}_{d_1-1,1}$ includes the diagonal $\Delta_{12}$, that is, the locus where $p_1=p_2$, which has codimension 1 on every component of $T_{d_1-1}\times E_2$. Off of the diagonal, $\wt{T}_{d_1-1,1}$ parametrizes $(V,p_1,p_2)$ where $V$ has total vanishing at least $d_1$ at $p_1$ and at least 2 at $p_2$. It thus follows from Proposition \ref{expected_dim_fixedL} that $\wt{T}_{d_1-1,1}$ is a Cartier divisor on $T_{d_1-1}\times E_2$.

\begin{lem}\label{D_2_formula}
As Cartier Divisors on $T_{d_1-1}\times E_2$, we have
\begin{equation*}
    \wt{T}_{d_1-1,1}=(d_1-1)\Delta_{12}+T_{d_1-1,1},
\end{equation*}
where $\Delta_{12}$ is the pullback to $T_{d_1-1}\times E_2$ of the diagonal in $E_1\times E_2$, and $T_{d_1-1,1}$ is the scheme-theoretic closure of the locus on $T_{d_1-1}\times E_2$ where $V$ has total vanishing at least 2 at $p_2$.
\end{lem}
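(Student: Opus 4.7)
My strategy is to verify the Cartier divisor equation by analyzing the support of $\wt{T}_{d_1-1,1}$ and computing its multiplicities along $\Delta_{12}$ and along the residual.

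\textbf{Support.} First, $\wt{T}_{d_1-1,1}$ is an effective Cartier divisor on $T_{d_1-1}\times E_2$, being the pullback of the Cartier divisor $\Sigma_1\subset G\times E_2$ along the projection forgetting $E_1$. Set-theoretically, its support consists of triples $(V,p_1,p_2)$ with $(V,p_1)\in T_{d_1-1}$ and $V$ having total vanishing $\ge 2$ at $p_2$. The diagonal $\Delta_{12}$ lies in this support, since on it $p_1=p_2$ and the condition at $p_2$ is automatic from $d_1\ge 2$. Off of $\Delta_{12}$, the support agrees with $T_{d_1-1,1}$ as described, and has pure codimension one by Proposition \ref{expected_dim_fixedL}.

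\textbf{Multiplicity along $\Delta_{12}$.} To compute $\text{mult}_{\Delta_{12}}\wt{T}_{d_1-1,1}$, I work on each component $\Sigma_{a,b}$ of $T_{d_1-1}$ (with $a+b=d_1-1$). Pick a general $x=(V_0,q_0,q_0)$ in the corresponding component of $\Delta_{12}$, so that $V_0$ has vanishing sequence exactly $(b,a+1)$ at $q_0$. The curve $C=\{(V_0,q_0)\}\times E_2\subset \Sigma_{a,b}\times E_2$ meets $\Delta_{12}$ transversely at $x$, since the $p_1$-coordinate is constant along $C$ while varying $p_2$ is not annihilated by the differential of $p_1-p_2$. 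Restricting $\wt{T}_{d_1-1,1}$ to $C$ yields the pullback of $\Sigma_1$ under $p_2\mapsto (V_0,p_2)$, i.e., the ramification locus on $E_2$ of the fixed pencil $V_0$; by Lemma \ref{sigma1_int_multiplicity}, this has multiplicity $b+(a+1)-1=d_1-1$ at $q_0$. Since intersection with a transverse curve picks up the sum of multiplicities of all irreducible components of $\wt{T}_{d_1-1,1}$ through $x$, we obtain $\text{mult}_{\Delta_{12}}\wt{T}_{d_1-1,1}\le d_1-1$.

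\textbf{Lower bound and conclusion.} For equality, I must show that no other component of $\wt{T}_{d_1-1,1}$ passes through $x$. Suppose some sequence $(V_n,p_{1,n},p_{2,n})\to x$ had $p_{1,n}\neq p_{2,n}$, with $V_n$ of total vanishing $\ge d_1$ at $p_{1,n}$ and $\ge 2$ at $p_{2,n}$, both tending to $q_0$. These contribute $\ge d_1-1$ and $\ge 1$ respectively to the ramification divisor $R(V_n)$ of degree $2d$, so by semicontinuity of ramification under specialization, $\text{mult}_{q_0}R(V_0)\ge d_1$, forcing $V_0$ to have total vanishing $\ge d_1+1$ at $q_0$ -- contradicting that $V_0\in\Sigma_{a,b}$ has total vanishing exactly $d_1$ there. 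Hence $\text{mult}_{\Delta_{12}}\wt{T}_{d_1-1,1}=d_1-1$ on each component, and setting $T_{d_1-1,1}:=\wt{T}_{d_1-1,1}-(d_1-1)\Delta_{12}$ gives an effective Cartier divisor with the claimed support; its reducedness at the generic point of each component, matching the scheme-theoretic closure description, follows from the integrality of $\Sigma_1$ (Lemma \ref{Sigma_ab_properties}(a)) combined with Proposition \ref{expected_dim_fixedL}. The main obstacle is this last lower-bound step: ruling out any extra contribution to the multiplicity along $\Delta_{12}$ coming from $T_{d_1-1,1}$, which rests on the semicontinuity argument showing that two distinct ramification points cannot collapse to a point of lesser total ramification.
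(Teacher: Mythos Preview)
Your approach is different from the paper's, and there is a genuine gap. The paper argues by direct Taylor expansion: writing $f(g,e_1)$ for the local equation of $\Sigma_1\subset G\times E_1$, the scheme $T_{d_1-1}$ is \emph{by construction} cut out by $\partial_{e_1}^{i}f(g,e_1)$ for $0\le i\le d_1-2$. The defining equation of $\wt T_{d_1-1,1}$ on $T_{d_1-1}\times E_2$ is $f(g,e_2)$, and expanding about $e_2=e_1$ gives $f(g,e_2)=\sum_{i}\partial_{e_1}^{i}f(g,e_1)\,(e_2-e_1)^{i}/i!$. On $T_{d_1-1}\times E_2$ the terms with $i\le d_1-2$ vanish \emph{in the structure sheaf}, so $f(g,e_2)=(e_1-e_2)^{d_1-1}h$ identically, with $h|_{\Delta_{12}}=\partial_{e_1}^{d_1-1}f$ generically nonzero by Proposition~\ref{expected_dim_fixedL}. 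This is an equality of Cartier divisors on the possibly non-reduced scheme $T_{d_1-1}\times E_2$.

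Your computation, by contrast, takes place on the reduced component $\Sigma_{a,b}\times E_2$: the test curve $C=\{(V_0,q_0)\}\times E_2$ lies there, so the order you extract via Lemma~\ref{sigma1_int_multiplicity}, together with your semicontinuity argument ruling out other set-theoretic branches through $x$, shows only that $f|_{\Sigma_{a,b}\times E_2}\in(e_1-e_2)^{d_1-1}$. But $T_{d_1-1}$ is generically non-reduced along $\Sigma_{a,b}$ whenever $c_{a,b}>1$ (Lemma~\ref{principal_parts}), and an equality of Cartier divisors on a non-reduced scheme cannot be verified on the reduction. Concretely, in $A=K[t,y]_{(t,y)}/(y^{2})$ the element $f=t^{n}+ty$ satisfies $V(f)=V(t)$ and $f|_{y=0}=t^{n}$, yet $f\notin(t^{2})$; so ``same support as $\Delta_{12}$'' plus ``order $d_1-1$ on a reduced transverse curve'' does not force $f\in(e_1-e_2)^{d_1-1}$ in $\cO_{T_{d_1-1}\times E_2}$. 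What closes this here is precisely that the low-order Taylor coefficients are the \emph{defining equations} of $T_{d_1-1}$, hence vanish scheme-theoretically --- which is the paper's argument, not yours. Your closing remark on generic reducedness of $T_{d_1-1,1}$ is likewise off target: $T_{d_1-1,1}$ is a Cartier divisor on a non-reduced base and need not be reduced; what the paper checks is merely that $h$ is not divisible by $e_1-e_2$.
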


\begin{proof}
It suffices to show that the multiplicity of $\Delta_{12}$ in $\wt{T}_{d_1-1,1}$ is $d_1-1$. In an analytic local neighborhood of a point of $G\times E_1\times E_2$, let $f(g,e_1)$ be the equation cut out by $T_1$ on $G\times E_1\times E_2$, where $g$ is a vector of local coordinates on $G$ and $e_1$ is a local coordinate on $E_1$. Then the equations cutting out $T_{d_1-1}$ are $\frac{\partial^i}{\partial (e_1)^i}f(g,e_1)=0$, for $i=0,1,\ldots,d_1-2$.

Now, the additional equation $f(g,e_2)=0$ cuts out $\wt{T}_{d_1-1,1}$ on $T_{d_1-1}\times E_2\subset G\times E_1\times E_2$, where $e_2$ is a coordinate on $E_2$. Taylor expanding in an analytic local neighborhood of a point in $\Delta_{12}$, we have
\begin{align*}
    f(g,e_2)&=f(g,e_1-(e_1-e_2))\\
    &=\sum_{i=0}^{\infty}\left(\frac{\partial^i}{\partial (e_1)^i}f(g,e_1)\right)(e_1-e_2)^i\\
    &=(e_{1}-e_2)^{d_1-1}\sum_{i=d_1-1}^{\infty}\left(\frac{\partial^i}{\partial (e_1)^i}f(g,e_1)\right)(e_1-e_2)^{i-(d_1-1)},
\end{align*}
because on $\wt{T}^1_{d_1-1}$, we have $\frac{\partial^i}{\partial (e_1)^i}f(g,e_1)=0$ for $i=0,1,\ldots,d_1-2$. Because $e_1-e_2$ is exactly the equation cutting out $\Delta_{12}$, it is left to check that $\frac{\partial^{d_1-1}}{\partial (e_1)^{d_1-1}}f(g,e_1)$ is not identically zero on $\wt{T}^1_{d_1-1}$. This follows from Proposition \ref{expected_dim_fixedL}, as the locus of triples $(V,p_1,p_2)$ with total vanishing $d_1+1$ at $p_1$ is pure of dimension strictly less than that of $T_{d_1-1}$.
\end{proof}

\begin{lem}\label{ram_points_collide}
\quad
\begin{enumerate}
    \item[(a)] As a set, $T_{d_1-1,1}\cap\Delta_{12}\subset G\times E_1\times E_2$ is equal to the locus of triples $(V,p_1,p_2)$ where $V$ has total vanishing at least $d_1+1$ at $p=p_1=p_2$.
    \item[(b)] Suppose that $V\in G$ has vanishing sequence $(a_0,a_1)$ at $p$. Then, the multiplicity of the intersection of $T_{d_1-1,1}$ with $\{V\}\times\{p\}\times E_2$ at $(V,p,p)$ is equal to $a_0+a_1-d_1$.
\end{enumerate}
\end{lem}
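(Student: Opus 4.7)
The strategy is to establish part (b) first by a direct local-coordinate computation that extends the Taylor expansion argument in the proof of Lemma \ref{D_2_formula}, and then to deduce part (a) as an almost immediate consequence.

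For part (b), I would work in an analytic neighborhood of $(V, p, p) \in G \times E_1 \times E_2$, with local coordinates $g$ on $G$ centered at $V$ and $e_i$ on $E_i$ centered at $p$. As in the proof of Lemma \ref{D_2_formula}, let $f(g, e_1)$ be the local equation of $\Sigma_1 = T_1$ on $G \times E_1$, so that $\wt{T}_{d_1-1,1}$ is cut out locally on $T_{d_1-1} \times E_2$ by $f(g, e_2)$. The proof of Lemma \ref{D_2_formula} exhibits, on $T_{d_1-1} \times E_2$, a factorization
\begin{equation*}
f(g, e_2) = (e_1 - e_2)^{d_1-1}\, h(g, e_1, e_2),
\end{equation*}
where $h$ is a local equation for the Cartier divisor $T_{d_1-1,1}$ (and, crucially, is not identically zero on $T_{d_1-1}$). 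Since $a_0 + a_1 \geq d_1$ by hypothesis of (b), the point $(V, p)$ lies in $T_{d_1-1}$, so the curve $\{V\} \times \{p\} \times E_2$ sits inside $T_{d_1-1} \times E_2$, and I can restrict the identity above to it. By Lemma \ref{sigma1_int_multiplicity}, $f(V, e_2)$ vanishes at $e_2 = p$ to order exactly $a_0 + a_1 - 1$, while $(e_1 - e_2)^{d_1-1}$ restricts to a function vanishing to order $d_1 - 1$. Hence $h(V, p, e_2)$ vanishes to order $(a_0 + a_1 - 1) - (d_1 - 1) = a_0 + a_1 - d_1$, which is exactly the claimed intersection multiplicity.

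For part (a), note that a point $(V, p, p) \in \Delta_{12}$ lies in $T_{d_1-1,1}$ precisely when the intersection multiplicity computed in (b) is strictly positive. If $a_0 + a_1 < d_1$, then $(V, p) \notin T_{d_1-1}$, so trivially $(V, p, p) \notin T_{d_1-1,1}$. Otherwise (b) gives multiplicity $a_0 + a_1 - d_1$, which is positive if and only if $V$ has total vanishing at least $d_1 + 1$ at $p$. This is exactly the set equality asserted in (a).

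The main subtle point is in (b): one must justify that the factorization in Lemma \ref{D_2_formula} really holds as regular functions on $T_{d_1-1} \times E_2$, so that restriction to the curve $\{V\} \times \{p\} \times E_2$ is well-defined and the factor $h$ is genuinely a local equation of $T_{d_1-1,1}$ near $(V, p, p)$. This is already essentially the content of Lemma \ref{D_2_formula}, which relies on $T_{d_1-1,1}$ being a Cartier divisor via Proposition \ref{expected_dim_fixedL} together with the non-vanishing of $\partial^{d_1-1} f / \partial e_1^{d_1-1}$ on $T_{d_1-1}$; the step here is just to carry the factorization into a direct restriction computation using Lemma \ref{sigma1_int_multiplicity}.
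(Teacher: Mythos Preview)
Your argument for part (b) is essentially identical to the paper's: both subtract the diagonal contribution $(d_1-1)$ from the order of vanishing $a_0+a_1-1$ supplied by Lemma~\ref{sigma1_int_multiplicity}, using the factorization from Lemma~\ref{D_2_formula}.

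For part (a), however, you take a genuinely different and more economical route. You deduce (a) directly from (b): the local equation $h$ of the Cartier divisor $T_{d_1-1,1}$ restricts to a nonzero function on $\{V\}\times\{p\}\times E_2$ vanishing to order $a_0+a_1-d_1$, so $(V,p,p)$ lies in $T_{d_1-1,1}$ exactly when this order is positive. This is correct and short. The paper instead proves (a) by a limit linear series argument: it decomposes $T_{d_1-1,1}$ set-theoretically as a union of closures $S_j$, and uses a one-parameter degeneration (blowing up $E\times E$ along the diagonal point and tracking aspects of limit series) to show that $S_j\cap\Delta_{12}=\Sigma_{d_1-j,j}\cup\Sigma_{d_1-1-j,j+1}$. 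This yields a strictly finer statement than (a) --- it identifies how each $S_j$ meets the diagonal, not just the union --- and the technique is invoked again in the proof of Lemma~\ref{no_excess_pencils} to analyze the deeper diagonals. Your approach buys simplicity for the statement as written, while the paper's buys a reusable geometric picture needed downstream.
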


\begin{proof}
We first prove (a). As a set,
\begin{equation*}
    T_{d_1-1,1}=\bigcup_j\overline{(\pr_1^{*}\Sigma_{d_1-1-j,j}\cap\pr_2^{*}\Sigma_1)-\Delta_{12}},
\end{equation*}
where $\pr_i:G\times E_1\times E_2\to G\times E_i$ are the projection maps, and the closure is taken in $G\times E_1\times E_2$ (equivalently, in $T_{d_1-1}\times E_2$). Let $S_j=\overline{(\pr_1^{*}\Sigma_{d_1-1-j,j}\cap\pr_2^{*}\Sigma_1)-\Delta_{12}}$. Identifying $\Delta_{12}\subset G\times E_1\times E_2$ with $G\times E$, we claim that the set-theoretic restriction of $S_i$ to $\Delta_{12}$ is $\Sigma_{d_1-j,j}\cup\Sigma_{d_1-1-j,j+1}$.

It suffices to check the claim pointwise, after further restriction to $G\times\{q\}\times\{q\}$, for a fixed $q\in E$. Consider the one-parameter family $p_1:X=\Bl_{q\times q}E\times E\to E$, with sections $\sigma_1,\sigma_2$ equal to the proper transforms of $\{q\}\times E$ and $\Delta$, respectively. If $p_2:X\to E$ is the second projection, the line bundle $p_2^{*}\cL$ restricts to $\cL$ on the general fiber of $p_1$, and, over $q$, to $\cL$ on the elliptic component and to $\cO_{\bP^1}$ on the rational component. 

Let $\cG_{\cL,j}$ be the moduli space of limit linear series on the fibers of $p_1$ with underlying line bundle $p_2^{*}\cL$ and with vanishing sequence at least $(j,d_1-j)$ along $\sigma_1$ and at least $(0,2)$ along $\sigma_2$. Following \cite{eh_lls}, $\cG_{\cL,j}$ may be constructed as a closed subscheme of a product of Grassmannian bundles over $E$, and carries a projection map $\pi:\cG_{\cL,j}\to G\times E$ remembering the aspects of limit linear series on the elliptic components. In particular, $\pi$ is proper, so the image of $\pi$, when restricted to $G\times\{q\}$, contains $S_i$.

The fiber of $\cG_{\cL,j}$ over $q$ is the space of limit linear series $V$ on $E\cup\bP^1$, where the $E$-aspect of $V$ has underlying line bundle $\cL$, and $V$ has vanishing at least $(j,d_1-j)$ and $(0,2)$ at $p_1,p_2\in\bP^1$, respectively. A straightforward calculation shows that the $E$-aspect of $V$ has vanishing at least $(i,d_1-i+1)$ or $(j+1,d_1-j)$ at $q$. Thus, we conclude that $S_j\subset\Sigma_{d_1-j,j}\cup\Sigma_{d_1-1-j,j+1}$.

In fact, this inclusion must be an equality, because the cycle class of $S_j$ when restricted to general fiber of $G\times E\to E$ is
\begin{equation*}
    \sigma_1\sigma_{d_1-1-j,j}=\sigma_{d_1-j,j}+\sigma_{d_1-1-j,j+1},
\end{equation*}
and thus the same is true over $q$. Taking the union over all $j$ yields (a).

The statement in part (b) follows from Lemmas \ref{sigma1_int_multiplicity} and \ref{D_2_formula}. Namely, the same proof from Lemma \ref{sigma1_int_multiplicity} shows that $T_{d_1-1}\times E_2$ intersects $\{P\}\times\{q_1\}\times E_2$ at $(P,q_1,q_2)$ with multiplicity $a_0+a_1-1$. By Lemma $\ref{D_2_formula}$, the contribution from $T_{d_1-1,1}$ is $a_0+a_1-1-(d_1-1)=a_0+a_1-d_1$.
\end{proof}

We now proceed as in Definition \ref{contact_locus} and Lemma \ref{principal_parts}. Let $T_{d_1-1,d_2-1}$ be the locus on $T_{d_1-1}\times E_2$ where the divisor $T_{d_1-1,1}$ intersects the fibers of the projection $T_{d_1-1}\times E_2\to T_{d_1-1}$ with multiplicity at least $d_2-1$. 

On $\Delta_{12}$, by Lemma \ref{ram_points_collide}, the underlying set of $T_{d_1-1,d_2-1}$ is the locus of pencils with total vanishing at least $d_1+d_2-1$. Away from $\Delta_{12}$, the underlying set of $T_{d_1-1,d_2-1}$ is the locus of pencils with total vanishing at least $d_i$ at $p_i$ for $i=1,2$. It follows from Proposition \ref{expected_dim_fixedL}, that $T_{d_1-1,d_2-1}$ has the expected dimension. We therefore obtain the following analogue of Lemma \ref{principal_parts}:

\begin{lem}\label{principal_parts_twopoints}
We have
\begin{equation*}
[T_{d_1-1,1}]^{d_2-1}=[T_{d_1-1,d_2-1}]=\sum_{a+b=d_2-1}c_{a,b}\pr_2^{*}[\Sigma_{a,b}]
\end{equation*}
in $A^r(T_{d_1-1}\times E_2)$.
\end{lem}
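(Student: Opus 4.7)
The plan is to mirror the proof of Lemma \ref{principal_parts} in the relative setting over $T_{d_1-1}$. I would construct $T_{d_1-1,d_2-1}$ as the vanishing locus of the tautological section of a rank-$(d_2-1)$ bundle $\cW'_{d_2-1}$ on $T_{d_1-1}\times E_2$ obtained from the relative principal parts of $\cO(T_{d_1-1,1})$ along the fibers of $\pi\colon T_{d_1-1}\times E_2 \to T_{d_1-1}$: explicitly,
\begin{equation*}
\cW'_{d_2-1} = p_{2*}\bigl(p_1^*\cO(T_{d_1-1,1})\otimes \cO/\cI_\Delta^{d_2-1}\bigr),
\end{equation*}
where $p_1,p_2\colon (T_{d_1-1}\times E_2)\times_{T_{d_1-1}} E_2 \to T_{d_1-1}\times E_2$ are the two projections and $\cI_\Delta$ is the ideal sheaf of the relative diagonal.

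Since the cotangent bundle of the elliptic curve $E$ is trivial, $\cI_\Delta/\cI_\Delta^2$ is trivial along $\Delta$, so $\cW'_{d_2-1}$ admits a filtration by $d_2-1$ line-bundle quotients each isomorphic to $\cO(T_{d_1-1,1})$. Combined with the fact that $T_{d_1-1,d_2-1}$ has expected codimension $d_2-1$ in $T_{d_1-1}\times E_2$ (from Proposition \ref{expected_dim_fixedL} together with the diagonal description in Lemma \ref{ram_points_collide}(a)), this yields the first equality $[T_{d_1-1,d_2-1}] = c_{d_2-1}(\cW'_{d_2-1}) = [T_{d_1-1,1}]^{d_2-1}$.

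For the second equality, I would decompose $T_{d_1-1,d_2-1}$ set-theoretically via Lemma \ref{ram_points_collide}: away from $\Delta_{12}$ it is $\bigcup_{a+b=d_2-1}\pr_2^{-1}(\Sigma_{a,b})\cap(T_{d_1-1}\times E_2)$, while on $\Delta_{12}$ it is the locus of $(V,p)$ with total vanishing $\ge d_1+d_2-1$ at $p$, which by Proposition \ref{expected_dim_fixedL} has codimension $d_2$ in $T_{d_1-1}\times E_2$. The latter is thus of strictly lower dimension than $T_{d_1-1,d_2-1}$ and contributes nothing to the codim-$(d_2-1)$ class, giving $[T_{d_1-1,d_2-1}]=\sum_{a+b=d_2-1}c'_{a,b}\pr_2^*[\Sigma_{a,b}]$ for some positive integers $c'_{a,b}$. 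I would identify $c'_{a,b}=c_{a,b}$ by restricting to the fiber of $T_{d_1-1}\times E_2 \to E_2$ over a general $q\in E_2$: on this fiber, $T_{d_1-1,1}$ agrees with $\wt{T}_{d_1-1,1}=\pr_2^*\Sigma_1$, and the Schubert-calculus argument of Lemma \ref{principal_parts} identifies the multiplicities with the SYT numbers $c_{a,b}$. The principal obstacle is precisely this excess-codimension verification for the diagonal locus, which the preceding dimension count resolves.
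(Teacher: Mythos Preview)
Your proposal is correct and matches the paper's approach; the paper itself offers no separate argument, merely stating that the lemma is the analogue of Lemma~\ref{principal_parts}, and you have filled in exactly those details (the principal-parts bundle filtration for the first equality, and the set-theoretic decomposition plus multiplicity identification for the second).

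One small point of care: your claim that ``on this fiber, $T_{d_1-1,1}$ agrees with $\wt{T}_{d_1-1,1}=\pr_2^*\Sigma_1$'' is not literally true, since the fiber of $\Delta_{12}$ over a general $q\in E_2$ is the nonempty locus $\{p_1=q\}\subset T_{d_1-1}$. What you actually need is that the multiplicity $c'_{a,b}$ is the length of $T_{d_1-1,d_2-1}$ at the \emph{generic point} of the component $\pr_2^{-1}\Sigma_{a,b}\cap(T_{d_1-1}\times E_2)$, and at such a generic point one has $p_1\neq p_2$, so locally $T_{d_1-1,1}$ coincides with $\pr_2^*\Sigma_1$ and hence $T_{d_1-1,d_2-1}$ coincides with $\pr_2^*T_{d_2-1}$; now Lemma~\ref{principal_parts} gives $c'_{a,b}=c_{a,b}$ directly. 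This avoids needing any linear-independence statement for the restricted classes in $A^*(T_{d_1-1})$, which is not as immediate as in the Grassmannian fiber $G_q$.
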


By the push-pull formula, we conclude:

\begin{cor}\label{two_conditions_locus}
We have
\begin{equation*}
[T_{d_1-1,d_2-1}]=\pr_{1}^{*}[\Sigma_1]^{d_1-1}\cdot(\pr_{2}^{*}[\Sigma_1]-(d_1-1)[\Delta_{12}])^{d_2-1}
\end{equation*}
in $A^r(G\times E_1\times E_2)$.
\end{cor}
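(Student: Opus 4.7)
The plan is to assemble the corollary directly from the three lemmas immediately preceding it, together with the projection formula applied to the inclusion $\iota:T_{d_1-1}\times E_2\hookrightarrow G\times E_1\times E_2$. The corollary contains no essentially new geometric content beyond Lemmas \ref{principal_parts}, \ref{D_2_formula}, and \ref{principal_parts_twopoints}; the only task is to re-express classes computed on $T_{d_1-1}\times E_2$ in terms of classes on the ambient $G\times E_1\times E_2$.

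First I would invoke the first equality of Lemma \ref{principal_parts_twopoints} to write
\begin{equation*}
[T_{d_1-1,d_2-1}]=[T_{d_1-1,1}]^{d_2-1}
\end{equation*}
inside $A^{*}(T_{d_1-1}\times E_2)$. Next, Lemma \ref{D_2_formula} gives
\begin{equation*}
[T_{d_1-1,1}]=[\wt{T}_{d_1-1,1}]-(d_1-1)[\Delta_{12}]
\end{equation*}
as Cartier divisors on $T_{d_1-1}\times E_2$. Since $\wt{T}_{d_1-1,1}$ was defined by pulling back the divisor $T_1=\Sigma_1\subset G\times E_2$ along the projection $T_{d_1-1}\times E_2\to G\times E_2$, its class equals $\iota^{*}\pr_2^{*}[\Sigma_1]$. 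Hence, writing everything as restrictions of classes from the ambient space,
\begin{equation*}
[T_{d_1-1,d_2-1}]=\iota^{*}\left(\pr_2^{*}[\Sigma_1]-(d_1-1)[\Delta_{12}]\right)^{d_2-1}
\end{equation*}
in $A^{*}(T_{d_1-1}\times E_2)$.

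Finally, I would push forward by $\iota$ using the projection formula. The fundamental class $\iota_{*}[T_{d_1-1}\times E_2]$ equals $\pr_1^{*}[T_{d_1-1}]$ in $A^{*}(G\times E_1\times E_2)$, and by Lemma \ref{principal_parts} we have $[T_{d_1-1}]=[\Sigma_1]^{d_1-1}$. Combining these ingredients yields
\begin{equation*}
[T_{d_1-1,d_2-1}]=\pr_1^{*}[\Sigma_1]^{d_1-1}\cdot\left(\pr_2^{*}[\Sigma_1]-(d_1-1)[\Delta_{12}]\right)^{d_2-1}
\end{equation*}
in $A^{*}(G\times E_1\times E_2)$, as claimed. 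There is no real obstacle; the only point requiring attention is the bookkeeping that $\wt{T}_{d_1-1,1}$ is literally the restriction of $\Sigma_1$ pulled back via the second-factor projection, so that subtracting the diagonal correction from Lemma \ref{D_2_formula} yields a class that is manifestly a restriction, making the projection formula applicable.
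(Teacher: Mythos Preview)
Your proposal is correct and matches the paper's approach exactly: the paper's entire proof is the single phrase ``By the push-pull formula, we conclude,'' and you have simply unpacked this by citing Lemmas \ref{principal_parts}, \ref{D_2_formula}, and \ref{principal_parts_twopoints} and applying the projection formula for the inclusion $\iota:T_{d_1-1}\times E_2\hookrightarrow G\times E_1\times E_2$. There is nothing to add.
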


We may repeat this procedure with the additional conditions at $p_3,p_4$ to obtain a subscheme
\begin{equation*}
T_{d_1-1,d_2-1,d_3-1,d_4-1}\subset G\times E_1\times E_2\times E_3\times E_4
\end{equation*}
of class
\begin{dmath}\label{class_to_integrate}
\pr_{1}^{*}[\Sigma_1]^{d_1-1}\cdot(\pr_{2}^{*}[\Sigma_1]-(d_1-1)[\Delta_{12}])^{d_2-1}\cdot(\pr_{3}^{*}[\Sigma_1]-(d_1-1)[\Delta_{13}]-(d_2-1)[\Delta_{23}])^{d_3-1}\cdot(\pr_4^{*}[\Sigma_1]-(d_1-1)[\Delta_{14}]-(d_2-1)[\Delta_{24}]-(d_3-1)[\Delta_{34}])^{d_4-1}
\end{dmath}
in $A^{*}(G\times E_1\times E_2\times E_3\times E_4)$, where $\pr_i:G\times E_1\times E_2\times E_3\times E_4\to G\times E_i$ denotes the projection as before. By construction, on the locus where the $p_i\in E$ are pairwise disjoint, $T_{d_1-1,d_2-1,d_3-1,d_4-1}$ is the subscheme parametrizing $(V,p_1,p_2,p_3,p_4)$ such that $V$ has total vanishing at least $d_i$ at $p_i$. The following two lemmas show that in fact $T_{d_1-1,d_2-1,d_3-1,d_4-1}$ has the desired structure to obtain the weighted counts $\wt{N}_{d_1,d_2,d_3,d_4}$.

\begin{lem}\label{no_excess_pencils}
The subscheme $T_{d_1-1,d_2-1,d_3-1,d_4-1}$ has dimension 0, and is disjoint from all diagonals of $G\times E_1\times E_2\times E_3\times E_4$.
\end{lem}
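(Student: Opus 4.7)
The plan is to apply Proposition \ref{expected_dim_fixedL} to bound the dimension both on the open stratum and on each diagonal stratum.

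On the open locus where $p_1,\ldots,p_4$ are pairwise distinct, $T_{d_1-1,d_2-1,d_3-1,d_4-1}$ parametrizes exactly the tuples $(V,p_1,\ldots,p_4)$ for which $V$ has total vanishing at least $d_i$ at $p_i$; this is the closure description built in to the construction (and checked through $T_{d_1-1,d_2-1}$ in Lemma \ref{D_2_formula} and Lemma \ref{ram_points_collide}). Using $\sum d_i = 2d+4$, Proposition \ref{expected_dim_fixedL} gives expected dimension $1 + 2(d-2) - \sum_{i=1}^4 (d_i-2) - 1 = 0$, so each component meeting the open stratum is zero-dimensional.

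To handle the diagonals, I would prove the following strengthening of Lemma \ref{ram_points_collide}(a): for any partition $\{S_1,\ldots,S_k\}$ of $\{1,2,3,4\}$ with $k < 4$, the set-theoretic restriction of $T_{d_1-1,\ldots,d_4-1}$ to the corresponding diagonal stratum is contained in the locus of pencils $V$ with total vanishing at least $\sum_{i\in S_j} d_i - (|S_j|-1)$ at the common point of $S_j$, for each $j$. Granting this, Proposition \ref{expected_dim_fixedL} gives expected dimension
\[
1 + 2(d-2) - \sum_{j=1}^{k}\Bigl(\sum_{i\in S_j} d_i - (|S_j|-1) - 2\Bigr) - 1 = k - 4 < 0,
\]
using $\sum d_i = 2d+4$ and $\sum |S_j|=4$; hence every such stratum is empty, yielding both disjointness from diagonals and zero-dimensionality.

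The main obstacle is the set-theoretic bound on diagonal strata. I would prove it by induction on $|S|$, iterating the degeneration argument from Lemma \ref{ram_points_collide}(a): the base case $|S|=2$ is that lemma, and the inductive step iteratively blows up the coincidences so that they separate, producing a comb curve $E \cup \bP^1 \cup \cdots$ on which one applies limit linear series theory. The additivity of Brill--Noether numbers (\ref{bn_number_additive}) forces the $E$-aspect to absorb the vanishing that ``should have'' been distributed among the colliding points, up to the combinatorial correction $|S_j|-1$ coming from the freedom to distribute vanishing between the elliptic aspect and the rational bubbles. The bookkeeping of which Schubert strata appear at each stage is the most delicate part, but follows the template already established in the proof of Lemma \ref{ram_points_collide}(a).
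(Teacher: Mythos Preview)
Your proposal is correct and follows essentially the same route as the paper's own (much terser) proof: invoke the natural extensions of Lemma~\ref{ram_points_collide} to show that on each diagonal stratum the total vanishing is concentrated at $k\le 3$ points with the bound $\sum_{i\in S_j}d_i-(|S_j|-1)$, then apply Proposition~\ref{expected_dim_fixedL} to obtain expected dimension $k-4<0$ and conclude emptiness. The paper simply asserts that the needed extension of Lemma~\ref{ram_points_collide}(a) goes through by the same limit-linear-series argument, whereas you spell out the inductive bookkeeping; the content is the same.
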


\begin{proof}
The expected dimension of $T_{d_1-1,d_2-1,d_3-1,d_4-1}$ is 0, and the natural extensions of Lemma \ref{ram_points_collide}, along with Proposition \ref{expected_dim_fixedL} imply the first statement. Along the diagonals, a limit linear series argument similar to that in the proof of Lemma \ref{ram_points_collide} shows that the underlying set of $T_{d_1-1,d_2-1,d_3-1,d_4-1}$ consists of pencils on $E$ whose ramification is concentrated at three or fewer points. Proposition \ref{expected_dim_fixedL} implies that for a general $E$, there are no such pencils, so the lemma follows.
\end{proof}

\begin{lem}\label{identify_pencil_weights}
We have
\begin{equation*}
\wt{N}_{d_1,d_2,d_3,d_4}=\frac{1}{d^2}\int_{G\times E_1\times E_2\times E_3\times E_4}[T_{d_1-1,d_2-1,d_3-1,d_4-1}]
\end{equation*}
\end{lem}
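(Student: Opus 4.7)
The plan is to reduce to a fixed-underlying-line-bundle count via Proposition \ref{fixL_instead}, identify the support of the zero-dimensional scheme $T := T_{d_1-1,d_2-1,d_3-1,d_4-1}$ as the finite set of 5-tuples being weighted in $\wt{N}_{d_1,d_2,d_3,d_4}$ (up to the factor $d^2$), and verify that the local multiplicity of $T$ at each support point matches the prescribed weight $\prod_{i=1}^4 c_{d_i-k_i-1,k_i}$.

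Applying Proposition \ref{fixL_instead}, it suffices to show that the integral on the right equals the weighted count of 5-tuples $(V,p_1,p_2,p_3,p_4)$ with $V\subset H^0(\cL)$ a pencil, the $p_i$ pairwise distinct, and $V$ of total vanishing $d_i$ at $p_i$. By Lemma \ref{no_excess_pencils}, $T$ is zero-dimensional and disjoint from all diagonals of $G\times E_1\times E_2\times E_3\times E_4$; iterating the construction of Lemma \ref{D_2_formula} at $p_3$ and $p_4$, its support coincides precisely with this set of 5-tuples. Hence $\int[T]$ is a sum of local multiplicities over this set, and it remains to identify each multiplicity with $\prod_{i=1}^4 c_{d_i-k_i-1,k_i}$, where $(k_i,d_i-k_i)$ denotes the vanishing sequence of $V$ at $p_i$.

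I would argue this inductively. At $(V,p_1)$, Lemma \ref{principal_parts} gives
\begin{equation*}
[T_{d_1-1}]=\sum_{a+b=d_1-1}c_{a,b}[\Sigma_{a,b}],
\end{equation*}
so the local multiplicity of $T_{d_1-1}$ at $(V,p_1)$ along $\Sigma_{d_1-k_1-1,k_1}$ is $c_{d_1-k_1-1,k_1}$. Since the support point has $p_1\neq p_2$, Lemma \ref{D_2_formula} identifies $T_{d_1-1,1}$ locally with the pullback of $\Sigma_1\subset G\times E_2$, and Lemma \ref{principal_parts_twopoints} contributes the additional local factor $c_{d_2-k_2-1,k_2}$. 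The same argument applied at $p_3$ and $p_4$ -- each time away from all the relevant diagonals $\Delta_{ij}$ -- produces the remaining two factors. Summing the resulting multiplicities $\prod_{i=1}^4 c_{d_i-k_i-1,k_i}$ over the support of $T$, and invoking the $d^{-2}$ reduction from Proposition \ref{fixL_instead}, recovers $\wt{N}_{d_1,d_2,d_3,d_4}$ as claimed.

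The main obstacle is verifying that the multiplicity computation iterates cleanly: at the stages where one adds $p_j$ for $j=2,3,4$, one needs an analogue of Lemma \ref{principal_parts} relative to the partially-constructed scheme $T_{d_1-1,\ldots,d_{j-1}-1}$, and in particular that the defining equations cut out a complete intersection near each support point. This should follow from the expected-dimensionality afforded by Proposition \ref{expected_dim_fixedL}, together with the non-vanishing argument used in Lemma \ref{D_2_formula} confirming that each residual-divisor step strips off exactly the correct diagonal multiplicity without affecting the local structure at non-diagonal support points.
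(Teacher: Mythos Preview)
Your reduction via Proposition \ref{fixL_instead} and the use of Lemma \ref{no_excess_pencils} to identify the support of $T$ are correct, and the paper proceeds the same way. The gap is in the multiplicity step. Knowing that $[T_{d_i-1}] = \sum_{a+b=d_i-1} c_{a,b}[\Sigma_{a,b}]$ tells you the \emph{generic} multiplicity of $T_{d_i-1}$ along $\Sigma_{d_i-k_i-1,k_i}$ is $c_{d_i-k_i-1,k_i}$, but the support points of $T$ are not generic points of these loci --- they are determined by all four conditions at once. To conclude that the length of $T$ at a given support point $P$ is exactly $\prod_i c_{d_i-k_i-1,k_i}$ (rather than merely at least this), you must show that the reduced loci $\pr_i^{*}\Sigma_{d_i-k_i-1,k_i}$ intersect \emph{transversely} at $P$. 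Expected dimension from Proposition \ref{expected_dim_fixedL} only gives that $T$ is zero-dimensional; a zero-dimensional scheme of the correct class can still have points of higher length, so the appeal to expected-dimensionality and the diagonal-stripping of Lemma \ref{D_2_formula} is not enough.

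The paper supplies precisely this transversality, by a deformation-theoretic argument that you do not have. Suppose the intersection were not transverse at $P=(V,p_1,p_2,p_3,p_4)$; then there is a nontrivial first-order deformation $(\wt V,\wt p_1,\wt p_2,\wt p_3,\wt p_4)$ over $B=\Spec k[\epsilon]/\epsilon^2$ preserving the vanishing sequences and the underlying line bundle $\cL$. One twists away the base points and translates so that $\wt p_1$ becomes the identity section; this produces a nontrivial tangent vector to the relevant Hurwitz space $\cH$ lying in the kernel of the differential of the forgetful map $\psi:\cH\to\cM_{1,1}$. Since $\cH$ and $\cM_{1,1}$ are smooth and have the same dimension, $\psi$ is generically étale, so such a kernel vector can only exist over special $(E,p_1)$. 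The genericity of $E$ then gives the contradiction. This Hurwitz-space input is the missing idea; without it, your inductive factorization of multiplicities does not close.
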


\begin{proof}
By Proposition \ref{fixL_instead}, it suffices to show that each $(V,p_1,p_2,p_3,p_4)\in T_{d_1-1,d_2-1,d_3-1,d_4-1}$ appears with multiplicity equal to that given in Definition \ref{pencil_weight}. Because the $p_i$ are pairwise distinct and $V$ must vanish to order exactly $d_i$ at the $p_i$, we may regard $(V,p_1,p_2,p_3,p_4)$ as arising locally from the intersections of $\pr_{i}^{*}[\Sigma_{d_i-k_i-1,k_i}]$, with multiplicities as dictated by Lemma \ref{principal_parts}. Therefore, it suffices to prove that the $\pr_{i}^{*}\Sigma_{d_i-k_i-1,k_i}$ intersect transversely in $G\times E_1\times E_2\times E_3\times E_4$.

Suppose that this were not the case. We would then have a non-trivial deformation $(\wt{V},\wt{p}_1,\wt{p}_2,\wt{p}_3,\wt{p}_4)$ of $(V,p_1,p_2,p_3,p_4)$. Letting $B=\Spec k[\epsilon]/\epsilon^2$, this means explicitly that $\wt{V}\subset H^0(E,\cL)\otimes_k B$ is a linear system on $E\times B$ with underlying line bundle $\cL_{B}=\cL\otimes_k B$, and has vanishing $(k_i,d_i-k_i)$ along sections $\wt{p}_1,\wt{p}_2,\wt{p}_3,\wt{p}_4$ of $E\times B\to B$ restricting to $p_1,p_2,p_3,p_4$.

We now remove the base-points of $\wt{V}$ by twisting, and apply a translation so that $\wt{p}_i$ becomes the identity section. Let $\tau:E\times B\to E\times B$ be the translation by $\wt{p}_1$. Explicitly, have a new quintuple $(\wt{V}',\wt{p},\wt{p}'_2,\wt{p}'_3,\wt{p}'_4)$ with
\begin{align*}
\wt{V}'&=\tau^{*}\left(\wt{V}\left(-\sum_i k_i\wt{p}_i\right)\right)\\
\wt{p}'_i&=\tau^{*}(\wt{p}_i);
\end{align*}
in particular, $\wt{p}'_1$ is just the identity section $\wt{p}$. Note that the underlying line bundle of $\wt{V}'$ is
\begin{equation*}
\cL'=\tau^{*}\left(\cL_B\left(-\sum_i k_i\wt{p}_i\right)\right).
\end{equation*}

Let $\cH$ be the Hurwitz space parametrizing degree $d-\sum_{i=1}^4 k_i$ covers $f:X\to\bP^1$ ramified to order $d_i-2k_i-1$ at pairwise distinct marked points $p_i\in X$ for $i=1,2,3,4$, where $X$ is a smooth curve of genus 1, and let $\psi:\cH\to\cM_{1,1}$ be the map remembering the elliptic curve $(X,p_1)$. We claim that $\wt{V}'$ gives rise to a non-trivial tangent vector $v$ of $\cH$ in the kernel of $d\psi$. It suffices to prove that, with $E$ fixed, we can recover the deformation of $(V,p_1,p_2,p_3,p_4)$ from the data of $(\wt{V}',\wt{p},\wt{p}'_2,\wt{p}'_3,\wt{p}'_4)$. Indeed, we have
\begin{equation*}
\tau^{*}\cL_B=\cL'\left(\sum_i k_i\wt{p}'_i\right).
\end{equation*}
We may recover the translation $\tau$, by the \'{e}taleness of the group scheme $K(\cL_B)$ over $B$, and thus the section $\wt{p}_1$. Now, by inverting the formulas for $\wt{P}'$ and $\wt{p}'_i$, we may recover $\wt{P}$ and $\wt{p}_i$ as well.

Finally, $\cH$ and $\cM_{1,1}$ are smooth, hence the map $\cH\to\cM_{1,1}$ is generically smooth. Thus, $v$ can only map to special $(E,p)\in\cM_{1,1}$. Because $E$ is general, we have reached a contradiction, completing the proof of the lemma.
\end{proof}

\vskip 3pt

\subsection{Proof of Theorem \ref{genus1_weighted_count}}

By \ref{class_to_integrate} and Lemma \ref{identify_pencil_weights}, to prove Theorem \ref{genus1_weighted_count} in the case $d_i\ge2$, we need to compute the integral of the class
\begin{dmath}
\pr_{1}^{*}[\Sigma_1]^{d_1-1}\cdot(\pr_{2}^{*}[\Sigma_1]-(d_1-1)[\Delta_{12}])^{d_2-1}\cdot(\pr_{3}^{*}[\Sigma_1]-(d_1-1)[\Delta_{13}]-(d_2-1)[\Delta_{23}])^{d_3-1}\cdot(\pr_4^{*}[\Sigma_1]-(d_1-1)[\Delta_{14}]-(d_2-1)[\Delta_{24}]-(d_3-1)[\Delta_{34}])^{d_4-1}
\end{dmath}
on $G\times E_1\times E_2\times E_3\times E_4$. It suffices to work in numerical equivalence; we will do so throughout this section.

\begin{lem}
In $\Num(G\times E_i)$, we have
\begin{equation*}
    [\Sigma_1]=\sigma_1+2dx_i,
\end{equation*}
where $\sigma_1\in \Num(G)$ is the usual Schubert cycle and $x_i\in \Num(E_i)$ is the class of a point.
\end{lem}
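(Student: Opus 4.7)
The plan is to use the Künneth decomposition of $\Num^{1}(G\times E_{i})$ and determine the coefficients by restriction to fibers of the two projections. Since $\Num^{1}(G)=\bZ\sigma_{1}$ and $\Num^{1}(E_{i})=\bZ x_{i}$, we may write
\begin{equation*}
[\Sigma_{1}]=a\,\sigma_{1}+b\,x_{i}
\end{equation*}
in $\Num^{1}(G\times E_{i})$ for unique integers $a,b$.

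To pin down $a$, I would restrict to a fiber $G\times\{q\}$ for a general $q\in E$. By Lemma \ref{Sigma_ab_properties}(a), applied with $a=1$, $b=0$, the restriction $\Sigma_{1}|_{G\times\{q\}}$ is integral of the expected codimension and equals the classical Schubert divisor $\sigma_{1}$ on $\Gr(2,H^{0}(\cL))$ determined by the flag of subspaces $H^{0}(\cL(-rq))\subset H^{0}(\cL)$. Hence $a=1$.

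To pin down $b$, restrict to $\{V\}\times E_{i}\cong E$ for a general pencil $V\in G$. By Lemma \ref{sigma1_int_multiplicity}, the scheme-theoretic intersection of $\Sigma_{1}$ with this fiber at a point where $V$ has vanishing sequence $(a_{0},a_{1})$ has multiplicity $a_{0}+a_{1}-1$. For $V$ generic, the associated map $f\colon E\to\bP^{1}$ has only simple ramification, so every point in the support contributes multiplicity $1$, and the total number of such points is computed by Riemann--Hurwitz: for a degree $d$ map from a genus $1$ curve, the ramification divisor has degree $2d+2g-2=2d$. Thus the degree of $\Sigma_{1}\cap(\{V\}\times E_{i})$ equals $2d$, giving $b=2d$.

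Combining these two computations yields $[\Sigma_{1}]=\sigma_{1}+2d\,x_{i}$. There is no real obstacle here; the only point to check carefully is that the generic pencil in $G$ gives rise to a cover of $\bP^{1}$ with only simple ramification, which is a standard consequence of Proposition \ref{expected_dim_fixedL} applied to the vanishing sequence $(0,3)$, whose locus has positive codimension in $G$.
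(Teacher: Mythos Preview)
Your argument is correct and is a genuinely different route from the paper's. You exploit the K\"unneth decomposition $\Num^{1}(G\times E_{i})=\bZ\sigma_{1}\oplus\bZ x_{i}$ (valid since $H^{1}(G)=0$), then read off the two coefficients by restricting to fibers of the two projections; the restriction to $\{V\}\times E_{i}$ reduces to a Riemann--Hurwitz count via Lemma~\ref{sigma1_int_multiplicity}. The paper instead computes the Chern classes $c(\cF_{k})$ and $c(\cV)^{-1}$ and applies Porteous's formula to the degeneracy locus $\Sigma_{k,0}$, obtaining the more general identity $[\Sigma_{k,0}]=\sigma_{k}+d(k+1)\sigma_{k-1}x_{i}$, of which the lemma is the case $k=1$. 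Your approach is more elementary and self-contained for the case at hand; the paper's approach yields the whole family of classes $[\Sigma_{k,0}]$ at once, though in the end only $k=1$ is used downstream. One small remark: when you restrict to $\{V\}\times E_{i}$ and invoke Riemann--Hurwitz, you implicitly need $V$ base-point-free, so alongside excluding vanishing sequence $(0,3)$ you should also note that the locus with vanishing sequence $(1,2)$ (i.e.\ a base-point) has positive codimension in $G$ by the same appeal to Proposition~\ref{expected_dim_fixedL}. Alternatively, you can bypass this entirely by observing that $\sum_{p}(a_{0}(p)+a_{1}(p)-1)$ is the degree of the Wronskian section of $\omega_{E}\otimes\cL^{\otimes 2}$, hence equals $2d$ for every $V$ not contained in $\Sigma_{1}$.
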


\begin{proof}
We first compute the classes of $c(\cF_k)$ and $c(\cV)$, as defined in \S \ref{ram_loci_section}. Because $\cI_{\Delta}/\cI_{\Delta}^2\cong N^{\vee}_{E_i/E_i\times E_i}$ is trivial, we may filter $\cO_{E_i\times E_i}/\cI_{\Delta}^{k}$ by $k$ trivial line bundle quotients on $E$, and thus
\begin{equation*}
c(\cF_k)=(1+c_1(\cL))^k=1+dkx_i.
\end{equation*}
Next, let $\cV\to H^0(E,\cL)\otimes_k \cO_G$ be the tautological inclusion. Let $W\subset H^0(E,\cL)$ be a subspace of codimension $k+1$. By definition, the first degeneracy locus of the composition $\cV\to (H^0(E,\cL)/W)\otimes_k \cO_G$ is $\sigma_k$, and by Porteous, its class is also equal to $\{c(\cV)^{-1}\}_k$. We thus conclude that
\begin{equation*}
\frac{1}{c(\cV)}=\sum_{i=0}^{d-2}\sigma_i.
\end{equation*}
Now, $\Sigma_{k,0}$ is the first degeneracy locus of $\varphi_{k+1}$, so by Porteous, we have
\begin{equation*}
[\Sigma_{k,0}]=\left\{c(\cF_{k+1})\cdot\frac{1}{c(\cV)}\right\}_{k}=\sigma_{k}+d(k+1)\sigma_{k-1}x_i.
\end{equation*}
\end{proof}

We thus have
\begin{equation*}
    \widetilde{N}_{d_1,d_2,d_3,d_4}=\frac{1}{d^2}\int_{G\times E_1\times E_2\times E_3\times E_4}R_1R_2R_3R_4,
\end{equation*}
where
\begin{align*}
    R_1&=(\sigma_1+2dx_1)^{d_1-1},\\
    R_2&=(\sigma_1+2dx_2-(d_1-1)\Delta_{12})^{d_2-1},\\
    R_3&=(\sigma_1+2dx_3-(d_1-1)\Delta_{13}-(d_2-1)\Delta_{23})^{d_3-1},\\
    R_4&=(\sigma_1+2dx_4-(d_1-1)\Delta_{14}-(d_2-1)\Delta_{24}-(d_3-1)\Delta_{34})^{d_4-1}.
\end{align*}
Here, all classes are regarded as pulled back to the ambient space. We have
\begin{align*}
    R_2&=\quad\sigma_1^{d_2-1}\\
    &\quad+\sigma_1^{d_2-2}(d_2-1)(2dx_2-(d_1-1))\Delta_{12}\\
    &\quad+\sigma_1^{d_2-3}(d_2-1)(d_2-2)\cdot(-2d(d_1-1)x_1x_2)
\end{align*}
and
\begin{align*}
    R_3&=\quad\sigma_1^{d_3-1}\\
    &\quad+\sigma_{1}^{d_3-2}(d_3-1)(2dx_3-(d_1-1)\Delta_{13}-(d_2-1)\Delta_{23})\\
    &\quad+\sigma_{1}^{d_3-3}(d_3-1)(d_3-2)(-2d(d_1-1)x_1x_3-2d(d_2-1)x_2x_3+(d_1-1)(d_2-1)\Delta_{123})\\
    &\quad+\sigma_{1}^{d_3-4}(d_3-1)(d_3-2)(d_3-3)\cdot2d(d_1-1)(d_2-1)x_1x_2x_3
\end{align*}
Multiplying,
\begin{align*}
    R_2R_3&=\quad\sigma_{1}^{d_2+d_3-2}\\
    &\quad+\sigma_{1}^{d_2+d_3-3}(2d(d_2-1)x_2+2d(d_3-1)x_3\\
    &\qquad\qquad-(d_1-1)(d_2-1)\Delta_{12}-(d_1-1)(d_3-1)\Delta_{13}-(d_2-1)(d_3-1)\Delta_{23})\\
    &\quad+\sigma_{1}^{d_2+d_3-4}(2d(d_1-1)(d_2-1)(d_2-2)x_1x_2-2d(d_1-1)(d_3-1)(d_3-2)x_1x_3\\
    &\qquad\qquad+2d(d_2-1)(d_3-1)(2d-d_2-d_3+3)x_2x_3\\
    &\qquad\qquad-2d(d_1-1)(d_2-1)(d_3-1)\Delta_{12}x_3-2d(d_1-1)(d_2-1)(d_3-1)\Delta_{13}x_2\\
    &\qquad\qquad+(d_1-1)(d_2-1)(d_3-1)(d_1+d_2+d_3-4)\Delta_{123})\\
    &\quad+\sigma_1^{d_2+d_3-5}(-2d(d_1-1)(d_2-1)(d_3-1)d_4(d_2+d_3-4)x_1x_2x_3).
\end{align*}

We next multiply with
\begin{align*}
    R_4&=\quad\sigma_1^{d_4-1}\\
    &\quad+\sigma_1^{d_4-2}(d_4-1)(2dx_4-(d_1-1)\Delta_{14}-(d_2-1)\Delta_{24}-(d_3-1)\Delta_{34})\\
    &\quad+\sigma_1^{d_4-3}(d_4-1)(d_4-2)(-2d(d_1-1)x_1x_4-2d(d_2-1)x_2x_4-2d(d_3-1)x_3x_4\\
    &\qquad\qquad+(d_1-1)(d_2-1)\Delta_{124}+(d_1-1)(d_3-1)\Delta_{134}+(d_2-1)(d_3-1)\Delta_{234})\\
    &\quad+\sigma_{1}^{d_4-4}(d_4-1)(d_4-2)(d_4-3)(2d(d_1-1)(d_2-1)x_1x_2x_4+2d(d_1-1)(d_3-1)x_1x_3x_4\\
    &\qquad\qquad+2d(d_2-1)(d_3-1)x_2x_3x_4-(d_1-1)(d_2-1)(d_3-1)\Delta_{1234})\\
    &\quad+\sigma_{1}^{d_4-5}(d_4-1)(d_4-2)(d_4-3)(d_4-4)(-2d(d_1-1)(d_2-1)(d_3-1)x_1x_2x_3x_4).
\end{align*}
In the product $R_2R_3R_4$, we only wish to extract the terms that will be non-zero after multiplying by
\begin{align*}
    R_1&=\sigma_1^{d_1-1}+\sigma_1^{d_1-2}(d_1-1)(2dx_1)
\end{align*}
and integrating. These are the terms of $R_2R_3R_4$ that have factors of exactly $\sigma_1^{d_2+d_3+d_4-6}$ and $\sigma_1^{d_2+d_3+d_4-7}$.

First, we extract the terms having a factor of $\sigma_1^{d_2+d_3+d_4-6}$, and multiply by $x_1$. There are three non-zero contributions: the product of the $\sigma_{1}^{d_2+d_3-i}$ term of $R_2R_3$ and the $\sigma_1^{d_4-(6-i)}$ for $i=2,3,4$ (when $i=5$, multiplying by $x_1$ kills the term coming from $R_2R_3$). These are listed below; we suppress the factor of $\sigma_{1}^{d_2+d_3+d_4-6}x_1x_2x_3x_4$ appearing in all three.
\begin{itemize}
    \item $i=2$: $(d_2-1)(d_3-1)(d_4-1)(d_4-2)(d_4-3)(d_2+d_3+d_4-3)$
    \item $i=3$: $-2(d_2-1)(d_3-1)(d_4-1)^2(d_4-2)(d_2+d_3+d_4-3)$
    \item $i=4$: $(d_2-1)(d_3-1)(d_4-1)^2d_4(d_2+d_3+d_4-3)$
\end{itemize}
The sum of these contributions is
\begin{align*}
    &\quad(d_2-1)(d_3-1)(d_4-1)(d_2+d_3+d_4-3)((d_4-2)(d_4-3)-2(d_4-1)(d_4-2)+d_4(d_4-1))\\
    &=2(d_2-1)(d_3-1)(d_4-1)(d_2+d_3+d_4-3).
\end{align*}
Therefore, we get a total contribution to $\widetilde{N}_{d_1,d_2,d_3,d_4}$ of
\begin{equation*}
    4d(d_1-1)(d_2-1)(d_3-1)(d_4-1)(d_2+d_3+d_4-3)\cdot\int_{G}\sigma_1^{2d-4}.
\end{equation*}

Now, we consider the contribution from terms with a factor of $\sigma_1^{d_2+d_3+d_4-7}$. Here, there are four non-zero contributions: the product of the $\sigma_{1}^{d_2+d_3-i}$ term of $R_2R_3$ and the $\sigma_1^{d_4-(7-i)}$ for $i=2,3,4,5$. Suppressing the factors of $\sigma_{1}^{d_2+d_3+d_4-7}x_1x_2x_3x_4$, they are:
\begin{itemize}
    \item $i=2$: $-2d(d_1-1)(d_2-1)(d_3-1)(d_4-1)(d_4-2)(d_4-3)(d_4-4)$
    \item $i=3$: $2d(d_1-1)(d_2-1)(d_3-1)(d_4-1)(d_4-2)(d_4-3)(2d_4-d_2-d_3)$
    \item $i=4$: $2d(d_1-1)(d_2-1)(d_3-1)(d_4-1)^2(d_4-2)(2d_2+2d_3-d_4-6)$
    \item $i=5$: $-2d(d_1-1)(d_2-1)(d_3-1)(d_4-1)^2d_4(d_2+d_3-4)$
\end{itemize}
The sum of these contributions is
\begin{align*}
    &\quad2d(d_1-1)(d_2-1)(d_3-1)(d_4-1)(-(d_4-2)(d_4-3)(d_4-4)\\
    &\qquad\qquad+(d_4-2)(d_4-3)(2d_4-d_2-d_3)\\
    &\qquad\qquad+(d_4-1)(d_4-2)(2d_2+2d_3+d_4-6)+(d_4-1)(d_2+d_3-4))\\
    &=-4d(d_1-1)(d_2-1)(d_3-1)(d_4-1)(d_2+d_3+d_4-6).
\end{align*}
The corresponding contribution to $\widetilde{N}_{d_1,d_2,d_3,d_4}$ is then
\begin{equation*}
    -4d(d_1-1)(d_2-1)(d_3-1)(d_4-1)(d_2+d_3+d_4-6)\cdot\int_{G}\sigma_1^{2d-4}.
\end{equation*}

Summing the two contributions, we conclude:

\begin{equation}\label{tildeN_final_formula}
    d^2\widetilde{N}_{d_1,d_2,d_3,d_4}=12d(d_1-1)(d_2-1)(d_3-1)(d_4-1)C_{d-2},
\end{equation}

\begin{proof}[Proof of Theorem \ref{genus1_weighted_count}]
When $d_1,d_2,d_3,d_4\ge 2$, we have (\ref{tildeN_final_formula}). When $d_i=1$, the right hand side of (\ref{tildeN_final_formula}) becomes zero, and indeed, Proposition \ref{expected_dim} implies that $\wt{N}_{d_1,d_2,d_3,d_4}=0$.
\end{proof}

\vskip 3pt

\subsection{Variants}

Here, we make some auxiliary remarks on variants of the method of computation above. First, note that in the first step of the proof, instead of imposing the condition $T_{d_1-1}\subset G\times E$, we could have directly imposed the condition that $V$ has \textit{ramification index} at least $d_1$ at $p_1$, that is, computed the locus $\Sigma_{d_1-1}\subset G\times E$ by Porteous's formula. Then, as we need to subtract excess loci in the subsequent steps, the remainder of the computation will remain the same. Carrying out the computation in this way yields the following:

\begin{prop}\label{first_point_unweighted}
Let $(E,p_1),d,d_1,d_2,d_3,d_4$ be as above. Then, the weighted number $\wt{N}^{\circ}_{d_1,d_2,d_3,d_4}$ of tuples $(V,p_2,p_3,p_4)$ of pencils with vanishing $(0,d_1)$ at $p_1$ and total vanishing $d_i$ at $p_i$ for $i=2,3,4$ is
\begin{equation*}
\wt{N}^{\circ}_{d_1,d_2,d_3,d_4}=2d_1(d_1+1)(d_1-1)(d_2-1)(d_3-1)(d_4-1)\binom{2d-d_1-2}{d-d_1}\cdot\frac{1}{d(d-1)}.
\end{equation*}
Here, the multiplicity of $(V,p_2,p_3,p_4)$ in the weighted count is
\begin{equation*}
C^{d_2,d_3,d_4}_{k_2,k_3,k_4}=\prod_{i=2}^{4}c_{d_i-k_i-i,k_i},
\end{equation*}
where $(k_i,d_i-k_i)$ is the vanishing sequence of $V$ at $p_i$.
\end{prop}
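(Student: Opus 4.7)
The plan is to repeat the argument of \S\ref{weighted_section} almost verbatim, but replacing the class $[\Sigma_1]^{d_1-1}=[T_{d_1-1}]$ on $G\times E_1$ (which encodes \emph{total vanishing} of order at least $d_1$ at $p_1$, weighted over vanishing sequences by Lemma \ref{principal_parts}) with the Schubert class $[\Sigma_{d_1-1}]$ (which encodes \emph{ramification index} at least $d_1$ at $p_1$, i.e.\ vanishing sequence $(0,d_1)$ generically). By the Porteous computation already recorded in the proof of Theorem \ref{genus1_weighted_count},
\[
[\Sigma_{d_1-1}] = \sigma_{d_1-1} + d\cdot d_1\cdot\sigma_{d_1-2}\, x_1
\]
in $\Num(G\times E_1)$. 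Since a generic element of $\Sigma_{d_1-1}$ has total vanishing exactly $d_1$ at $p_1$---just as for $T_{d_1-1}$---the excess-diagonal machinery of \S\ref{impose_additional_ram} (Lemmas \ref{D_2_formula}, \ref{ram_points_collide}, \ref{no_excess_pencils}, and \ref{identify_pencil_weights}) carries over verbatim: the diagonals $\Delta_{1j}$ appear with the same multiplicity $d_1-1$ (by Lemma \ref{sigma1_int_multiplicity}), and the same factors $R_2,R_3,R_4$ describe the excess corrections. The weight at $p_1$ now collapses to $c_{d_1-1,0}=1$, while the weights $C^{d_2,d_3,d_4}_{k_2,k_3,k_4}$ at the remaining points appear as claimed.

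It remains to evaluate $\frac{1}{d^2}\int_{G\times E_1\times E_2\times E_3\times E_4}[\Sigma_{d_1-1}]\cdot R_2 R_3 R_4$. The summand $\sigma_{d_1-1}$ pairs against the $\sigma_1^{2d-d_1-3}=\sigma_1^{d_2+d_3+d_4-7}$-coefficient of the K\"unneth component $x_1 x_2 x_3 x_4$ of $R_2 R_3 R_4$, while $d\cdot d_1\cdot\sigma_{d_1-2}\, x_1$ pairs against the $\sigma_1^{2d-d_1-2}=\sigma_1^{d_2+d_3+d_4-6}$-coefficient of the K\"unneth component $x_2 x_3 x_4$ (terms of $R_2 R_3 R_4$ that already carry $x_1$ vanish against this part since $x_1^2=0$). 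Both coefficients were computed in the proof of Theorem \ref{genus1_weighted_count} as the $j=7$ and $j=6$ sums, and equal respectively
\[
-4d(d_1-1)(d_2-1)(d_3-1)(d_4-1)(d_2+d_3+d_4-6),\qquad 2(d_2-1)(d_3-1)(d_4-1)(d_2+d_3+d_4-3).
\]

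The integrals over $G$ follow from Poincar\'e duality on $\Gr(2,d)$ and Lemma \ref{LR_coeffs}:
\[
\int_G\sigma_{d_1-1}\sigma_1^{2d-d_1-3}=c_{d-2,d-1-d_1}=\binom{2d-d_1-3}{d-2}\cdot\frac{d_1}{d-1},
\]
\[
\int_G\sigma_{d_1-2}\sigma_1^{2d-d_1-2}=c_{d-2,d-d_1}=\binom{2d-d_1-2}{d-2}\cdot\frac{d_1-1}{d-1}.
\]
Substituting $d_2+d_3+d_4=2d+4-d_1$ to rewrite the linear factors and using the identity $\binom{2d-d_1-3}{d-2}(2d-d_1-2)=\binom{2d-d_1-2}{d-2}(d-d_1)$, the combined contribution collapses to
\[
d^2\wt{N}^{\circ}_{d_1,d_2,d_3,d_4}=\frac{2d\, d_1(d_1-1)(d_2-1)(d_3-1)(d_4-1)}{d-1}\binom{2d-d_1-2}{d-2}\bigl[-2(d-d_1)+(2d-d_1+1)\bigr],
\]
whose bracket simplifies to $d_1+1$. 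After invoking the symmetry $\binom{2d-d_1-2}{d-2}=\binom{2d-d_1-2}{d-d_1}$ and dividing by $d^2$, the stated formula falls out. I expect the only real obstacle to be precisely this last algebraic reduction: merging two summands with different binomials and different coefficient polynomials into a single product via one application of Pascal's rule and one arithmetic cancellation, as the geometric setup itself is a mechanical adaptation of the proof of Theorem \ref{genus1_weighted_count}.
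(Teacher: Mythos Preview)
Your proposal is correct and follows precisely the approach the paper itself indicates: the paper does not give a detailed proof of this proposition but only remarks that one replaces $T_{d_1-1}$ by $\Sigma_{d_1-1}$ (computed via Porteous) and that ``the remainder of the computation will remain the same,'' which is exactly what you have carried out. Your arithmetic with the two contributions and the binomial identity is accurate and recovers the claimed formula.
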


Instead of working on $G\times E_1\times E_2\times E_3\times E_4$, one can also prove Theorem \ref{genus1_weighted_count} via an analogous computation on the smooth moduli variety $G\times\overline{\cM}_{E,5}$, where $\overline{\cM}_{E,5}$ denotes the fiber of the forgetful map $\overline{\cM}_{1,5}\to\overline{\cM}_{1,1}$ over $(E,p_1)$. The ramification loci may then be expressed in terms of tautological classes on $\overline{\cM}_{1,5}$. One pleasant feature is that the analogues of ``diagonal'' loci $\Delta_{ij}$ appear with multiplicity 1, and in all of the classes $\pr_i^{*}\Sigma_1$ (not just those with $j<i$), so the class of $T_{d_1,d_2,d_3,d_4}$ in this setting is clearly symmetric under permutation of the $p_i$. 

In fact, in this setting, it is natural to perform the computation in smooth families, for instance, over the universal family $\cC_{1,1}\to\cM_{1,1}$ of elliptic curves. While it would be desirable for the method to extend further to the singular fiber in the family $\overline{\cM}_{1,2}\to\overline{\cM}_{1,1}$, for instance, to compute certain pure-cycle Hurwitz numbers, it breaks down at singular points. 

In either setting, one can extend the technique to higher genus curves $C$, and allow the line bundle $\cL$ to vary. For example, let $J=\Pic^d(C)$, and assume for simplicity that $d>2g-2$. Let $\pr_J:C\times J\to J$ be the projection map, and let $\cE=(\pr_J)_{*}\cP$, where $\cP$ is the Poincaré bundle. Then, one can define the ramification loci as before on $\Gr(2,\cE)\times C^{3g}$. There are no obstructions to generalizing Theorem \ref{genus1_weighted_count} to higher genus except for the combinatorial difficulty of having $3g$ copies of $C$. Thus, to answer Question \ref{main_question} in the case $g>1$, we instead use the degeneration approach in \S \ref{degeneration_section}.

Finally, the method we have developed also works in enumerating higher rank linear systems, with two caveats. First, as Proposition \ref{expected_dim} fails in higher rank, it is necessary to restrict to cases in which the expected dimension statements are guaranteed to hold, for instance, if $m$ is small (see \cite{eh_counterexamples}, \cite{edidin}, and \cite{farkas_bn}). Second, one can again obtain counts of linear systems with imposed conditions of \textit{total vanishing}, but unlike in rank 1, it is not possible to recover the counts of linear systems with prescribed \textit{vanishing sequences} simply by twisting away base-points. Thus, results such as that of Farkas-Tarasca \cite{fartar} remain out of reach of our techniques.

\vskip 3pt

\section{Base-point-free pencils in genus 1: Proof of Theorems \ref{genus1_unweighted_count} and \ref{genus1_duality}}\label{unweighted_section}

As in the previous section, let $(E,p_1)$ be a general elliptic curve, let $d,d_1,d_2,d_3,d_4$ be integers such that $1\le d_i\le d$ and $d_1+d_2+d_3+d_4=2d+4$. Let $N_{d_1,d_2,d_3,d_4}$ be the number of 4-tuples $(p_2,p_3,p_4,f)$, where $p_i\in E$ are pairwise distinct points, and $f:E\to\bP^1$ is a morphism of degree $d$ with ramification index at least (and, by Corollary \ref{enumerated_covers_are_generic}, exactly) $d_i$ at each $p_i$. In this section, we use the fact that the $N_{d_1,d_2,d_3,d_4}$ are determined by the $\wt{N}_{d_1,d_2,d_3,d_4}$ to prove Theorems \ref{genus1_unweighted_count} and \ref{genus1_duality}.

\begin{prop}\label{recursion}
We have:
\begin{equation*}
\wt{N}_{d_1,d_2,d_3,d_4}\\
=\sum_{k_1,k_2,k_3,k_4\ge0}C^{d_1,d_2,d_3,d_4}_{k_1,k_2,k_3,k_4}N_{d_1-2k_1,d_2-2k_2,d_3-2k_3,d_4-2k_4}.
\end{equation*}
\end{prop}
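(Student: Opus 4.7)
The plan is to establish a stratified bijection, indexed by the base-point structure at the $p_i$, between pencils counted in $\wt{N}_{d_1,d_2,d_3,d_4}$ and base-point-free pencils counted in the various $N_{d_1-2k_1,\ldots,d_4-2k_4}$, obtained by twisting away base-points.

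The first key claim is that any pencil $V$ contributing to $\wt{N}_{d_1,d_2,d_3,d_4}$, with vanishing sequence $(k_i,d_i-k_i)$ at $p_i$, has base locus equal to exactly $\sum_{i=1}^{4}k_i p_i$. The inclusion of $\sum k_i p_i$ in the base locus is immediate from Corollary \ref{enumerated_covers_are_generic}, which ensures that the total vanishing at $p_i$ is exactly $d_i$. For the reverse inclusion, suppose $V$ had an extra base-point at some $q\notin\{p_1,\ldots,p_4\}$. Then $V(-q)$ would be a pencil of degree $d-1$ satisfying the same total vanishing conditions at the $p_i$; letting $q$ and the moving $p_2,p_3,p_4$ vary, the resulting moduli space has negative expected dimension by Proposition \ref{expected_dim} (the numerology $\sum d_i = 2d+4$ is off by $2$ for degree $d-1$, and adjoining the free point $q$ recovers only $1$ dimension), hence is empty for a general $E$.

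Given this, the assignment $V\mapsto V':=V(-\sum_{i=1}^{4}k_i p_i)$ sends a pencil in $\wt{N}_{d_1,d_2,d_3,d_4}$ with vanishing sequence $(k_i,d_i-k_i)$ at $p_i$ to a base-point-free pencil of degree $d'=d-\sum k_i$ with vanishing sequence $(0,d_i-2k_i)$ at $p_i$, i.e., with ramification index exactly $d_i-2k_i$ at $p_i$. The required numerology $\sum(d_i-2k_i)=2d'+4$ is preserved, so $V'$ defines a morphism $E\to\bP^1$ counted in $N_{d_1-2k_1,\ldots,d_4-2k_4}$. Conversely, given such a base-point-free $V'$, tensoring with $\cO(\sum k_i p_i)$ recovers $V$ with the prescribed vanishing sequences. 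These constructions are mutually inverse and preserve the moving data $(p_2,p_3,p_4)$, giving a bijection on each stratum indexed by $(k_1,\ldots,k_4)$.

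Finally, by Definition \ref{pencil_weight}, each $V$ in the stratum $(k_1,\ldots,k_4)$ contributes to $\wt{N}_{d_1,d_2,d_3,d_4}$ with multiplicity exactly $\prod_{i=1}^{4}c_{d_i-k_i-1,k_i}=C^{d_1,d_2,d_3,d_4}_{k_1,k_2,k_3,k_4}$, matching the coefficient on the right-hand side. Summing the stratum-wise bijections with these multiplicities yields the identity. The main obstacle is the base-locus analysis of the first step; beyond that, the argument is routine bookkeeping. Implicitly, the sum is over $0\le k_i < d_i/2$, since larger $k_i$ give no valid vanishing sequence, and edge strata with $d_i-2k_i=1$ (an unramified moving point) contribute zero on both sides by the dimension count in Proposition \ref{expected_dim}.
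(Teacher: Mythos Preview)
Your proposal is correct and follows essentially the same approach as the paper's (very terse) proof: both stratify by the vanishing sequences $(k_i,d_i-k_i)$ and observe that twisting by $\pm\sum_i k_i p_i$ gives a bijection between each stratum and the base-point-free pencils counted in $N_{d_1-2k_1,\ldots,d_4-2k_4}$. You are in fact more careful than the paper in verifying that pencils contributing to $\wt{N}_{d_1,\ldots,d_4}$ have no base-points away from the $p_i$, a point the paper leaves implicit.
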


\begin{proof}
After adding base-points of order $k_i$, each term on the left hand side counts the number of pencils of degree $d$ on $E$ with vanishing sequence $(k_i,d_i-k_i)$ at $p_i$ for $i=1,2,3,4$, with the appropriate multiplicity as in the definition of $\wt{N}_{d_1,d_2,d_3,d_4}$.
\end{proof}

\subsection{Generating functions}

It is natural to package the numbers $\wt{N}_{d_1,d_2,d_3,d_4},N_{d_1,d_2,d_3,d_4}$ into generating functions; after doing so, we obtain a formula for $N_{d_1,d_2,d_3,d_4}$ as a particular coefficient in a power series in one variable, see Proposition \ref{N_coeff_onevar}. It will be convenient to treat $d$ as an independent variable from the $d_i$, so we first extend the definitions of $N_{d_1,d_2,d_3,d_4}$ and $\wt{N}_{d_1,d_2,d_3,d_4}$. 

\begin{defn}\label{extended_N}
For any integers $d,d_1,d_2,d_3,d_4$ with $d_i\ge 1$ and $d\ge2$, define
\begin{equation*}
    \widetilde{N}^d_{d_1,d_2,d_3,d_4}=\frac{12C_{d-2}}{d}(d_1-1)(d_2-1)(d_3-1)(d_4-1).
\end{equation*}
Then, define $N^d_{d_1,d_2,d_3,d_4}$ inductively as the unique integers satisfying 
\begin{equation*}
\wt{N}^d_{d_1,d_2,d_3,d_4}\\
=\sum_{k_1,k_2,k_3,k_4\ge0}C^{d_1,d_2,d_3,d_4}_{k_1,k_2,k_3,k_4}N^d_{d_1-2k_1,d_2-2k_2,d_3-2k_3,d_4-2k_4}
\end{equation*}
for all $d_i\ge1,d\ge2$.
\end{defn}

Clearly, when $d_1+d_2+d_3+d_4=2d+4$, we have $\wt{N}_{d_1,d_2,d_3,d_4}=\wt{N}^d_{d_1,d_2,d_3,d_4}$ and $N_{d_1,d_2,d_3,d_4}=N^d_{d_1,d_2,d_3,d_4}$.

\begin{defn}\label{N_generating_functions}
Define the generating functions
\begin{align*}
N(x_1,x_2,x_3,x_4,q)&=\sum_{d\ge2,d_i\ge1}N^d_{d_1,d_2,d_3,d_4}x_1^{d_1}x_2^{d_2}x_3^{d_3}x_4^{d_4}q^{d}\\
\wt{N}(x_1,x_2,x_3,x_4,q)&=\sum_{d\ge2,d_i\ge1}\wt{N}^d_{d_1,d_2,d_3,d_4}x_1^{d_1}x_2^{d_2}x_3^{d_3}x_4^{d_4}q^{d}
\end{align*}
\end{defn}

\begin{lem}\label{tildeN_gf_formula}
We have
\begin{equation*}
\wt{N}(x_1,x_2,x_3,x_4,q)=[(6q-1)+(1-4q)^{3/2}]\cdot\prod_{i=1}^{4}\left(\frac{x_i}{1-x_i}\right)^2
\end{equation*}
\end{lem}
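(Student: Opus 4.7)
The plan is a direct computation: the formula for $\wt{N}^d_{d_1,d_2,d_3,d_4}$ from Definition \ref{extended_N} is multiplicatively separated in $d$ and in each $d_i$, so the generating function $\wt{N}(x_1,x_2,x_3,x_4,q)$ factors as a product of one power series in $q$ and four power series in the $x_i$. I plan to evaluate each factor in closed form.

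First, substituting the explicit formula into Definition \ref{N_generating_functions} yields
\begin{equation*}
\wt{N}(x_1,x_2,x_3,x_4,q)=\left(\sum_{d\ge2}\frac{12C_{d-2}}{d}q^{d}\right)\cdot\prod_{i=1}^{4}\left(\sum_{d_i\ge1}(d_i-1)x_i^{d_i}\right).
\end{equation*}
For each $x_i$-factor, reindex by $k=d_i-2$ to obtain
\begin{equation*}
\sum_{d_i\ge1}(d_i-1)x_i^{d_i}=x_i^2\sum_{k\ge 0}(k+1)x_i^{k}=\frac{x_i^2}{(1-x_i)^2}=\left(\frac{x_i}{1-x_i}\right)^2,
\end{equation*}
which accounts for the product on the right-hand side of the claimed formula.

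It remains to show that $G(q):=\sum_{d\ge2}\frac{12C_{d-2}}{d}q^{d}$ equals $(6q-1)+(1-4q)^{3/2}$. The key observation is that $G(q)$ is an antiderivative: differentiating term by term gives $G'(q)=12\sum_{d\ge 2}C_{d-2}q^{d-1}=12q\cdot C(q)$, where $C(q)=\sum_{n\ge 0}C_n q^n=\frac{1-\sqrt{1-4q}}{2q}$ is the Catalan generating function recalled in Lemma \ref{catalan_gf} (with $t=1$). Hence
\begin{equation*}
G(q)=12\int_0^{q}t\,C(t)\,dt=\int_0^{q}6\bigl(1-\sqrt{1-4t}\bigr)\,dt=6q+(1-4q)^{3/2}-1,
\end{equation*}
where I used the antiderivative $\int\sqrt{1-4t}\,dt=-\frac{1}{6}(1-4t)^{3/2}$ and the initial condition $G(0)=0$. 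This gives the desired bracketed factor.

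There is no substantive obstacle: the only sanity checks are that the lowest-order term of $G$ is $6q^2$, matching $\frac{12C_0}{2}=6$, and that the signs in the integration are handled correctly. Assembling the three ingredients completes the proof.
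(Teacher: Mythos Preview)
Your proof is correct and follows essentially the same approach as the paper: both factor the generating function into a $q$-part and four $x_i$-parts, evaluate $\sum_{d_i\ge1}(d_i-1)x_i^{d_i}=\left(\frac{x_i}{1-x_i}\right)^2$, and obtain the $q$-factor by integrating the Catalan generating function. The only difference is that you spell out the antiderivative and the initial condition explicitly, whereas the paper simply asserts the result ``by integrating that of the Catalan numbers.''
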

\begin{proof}
Indeed,
\begin{align*}
&\quad\wt{N}(x_1,x_2,x_3,x_4,q)\\
&=\sum_{d_i,d}\frac{12}{d}(d_1-1)(d_2-1)(d_3-1)(d_4-1)C_{d-2}x_1^{d_1}x_2^{d_2}x_3^{d_3}x_4^{d_4}q^{d}\\
&=\sum_{d=2}^{\infty}\frac{12C_{d-2}}{d}q^d\cdot\prod_{i=1}^4\left(\sum_{d_i=1}^{\infty}(d_i-1)x_i^{d_i}\right)\\
&=[(6q-1)+(1-4q)^{3/2}]\cdot\prod_{i=1}^{4}\left(\frac{x_i}{1-x_i}\right)^2,
\end{align*}
where in the last step we obtain the generating function for the sequence $b_d=12C_{d-2}/d$ by integrating that of the Catalan numbers, see \cite[Example 6.2.6]{stanley}.
\end{proof}

\begin{prop}\label{N_vs_tildeN}
The generating functions $N(x_1,x_2,x_3,x_4,q)$ and $\wt{N}(x_1,x_2,x_3,x_4,q)$ are related by the following formulas:
\begin{align*}
\wt{N}(x_1,x_2,x_3,x_4,q)&=N\left(\frac{1-\sqrt{1-4x_{1}^2q}}{2x_{1}q},\frac{1-\sqrt{1-4x_{2}^2q}}{2x_{2}q},\frac{1-\sqrt{1-4x_{3}^2q}}{2x_{3}q},\frac{1-\sqrt{1-4x_{4}^2q}}{2x_{4}q},q\right)\\
N(x_1,x_2,x_3,x_4,q)&=\wt{N}\left(\frac{x_1}{1+x_1^2q},\frac{x_2}{1+x_2^2q},\frac{x_3}{1+x_3^2q},\frac{x_4}{1+x_4^2q},q\right)
\end{align*}
\end{prop}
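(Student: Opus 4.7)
The plan is to derive the first identity from the recursion of Proposition \ref{recursion} by substituting in the closed form for the Catalan-type generating function given by Lemma \ref{catalan_gf}; the second identity will then follow by a simple algebraic inversion.

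First I would rewrite Proposition \ref{recursion} with the degree of every pencil appearing explicitly as a superscript. Since adding base points of total order $\sum k_j$ raises the degree by $\sum k_j$, the recursion reads
\[
\wt{N}^d_{d_1,\ldots,d_4}=\sum_{k_1,\ldots,k_4\ge 0}\prod_{i=1}^{4}c_{d_i-k_i-1,k_i}\,N^{d-\sum k_j}_{d_1-2k_1,\ldots,d_4-2k_4}.
\]
Multiplying through by $q^{d}\prod_i x_i^{d_i}$ and summing over $d\ge 2$, $d_i\ge 1$, the critical move is to reindex with $d_i'=d_i-2k_i$ and $d'=d-\sum k_j$. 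After this change of variables the right-hand side factors as
\[
\sum_{d',d_i'}q^{d'}N^{d'}_{d_i'}\prod_{i=1}^{4}\left(x_i^{d_i'}\sum_{k_i\ge 0}c_{d_i'+k_i-1,k_i}(x_i^{2}q)^{k_i}\right).
\]
Applying Lemma \ref{catalan_gf} with $t=d_i'$ and $z=x_i^{2}q$ evaluates each inner sum as $\bigl(\tfrac{1-\sqrt{1-4x_i^{2}q}}{2x_i^{2}q}\bigr)^{d_i'}$, and combining with the prefactor $x_i^{d_i'}$ yields $\bigl(\tfrac{1-\sqrt{1-4x_i^{2}q}}{2x_iq}\bigr)^{d_i'}$. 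Summing over $d',d_i'$ then gives exactly $N$ evaluated at the substitution of the first formula, proving that identity.

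For the second formula, I would check that the two substitutions in the proposition are inverse formal power series in $q$ and the $x_i$. With $y_i=\frac{1-\sqrt{1-4x_i^{2}q}}{2x_iq}$, the equation $2x_iy_iq=1-\sqrt{1-4x_i^{2}q}$ squares to $(1-2x_iy_iq)^{2}=1-4x_i^{2}q$; expanding and dividing by $4x_iq$ rearranges to $x_i=\frac{y_i}{1+y_i^{2}q}$. Substituting this expression for $x_i$ into the first formula and renaming $y_i$ as $x_i$ produces the second.

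The only point that warrants any care is the legitimacy of the reindexing as an identity of formal power series in $q$ and the $x_i$; this is routine because each coefficient of $q^{d}\prod x_i^{d_i}$ receives contributions from only the finitely many $(k_1,\ldots,k_4)$ with $d_i-2k_i\ge 1$. The substantive content of the proof is Lemma \ref{catalan_gf}, whose closed form is precisely what introduces the square-root expressions appearing on both sides of the proposition.
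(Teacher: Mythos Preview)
Your proposal is correct and follows essentially the same route as the paper: expand the recursion (with the degree shift $d\mapsto d-\sum k_j$), reindex, apply Lemma~\ref{catalan_gf} with $t=d_i'$ and $z=x_i^2q$ to collapse the inner sums, and then invert the substitution to obtain the second identity. The paper is terser on the inversion step, simply asserting that the second formula follows from the first, whereas you spell out the quadratic algebra showing $x_i\mapsto y_i$ and $y_i\mapsto \frac{y_i}{1+y_i^2q}$ are mutual inverses.
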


\begin{proof}
The second formula will follow directly from the first. We have
\begin{align*}
&\quad\wt{N}(x_1,x_2,x_3,x_4,q)\\
&=\sum_{d,d_i}\wt{N}^d_{d_1,d_2,d_3,d_4}x_1^{d_1}x_2^{d_2}x_3^{d_3}x_4^{d_4}q^{d}\\
&=\sum_{d,d_i,k_i} c_{d_1-k_1-1,d_1}\cdots c_{d_4-k_4-1,d_4}\cdot N^{d-k_1-k_2-k_3-k_4}_{d_1-2k_1,d_2-2k_2,d_3-2k_3,d_4-2k_4}x_1^{d_1}x_2^{d_2}x_3^{d_3}x_4^{d_4}q^{d}\\
&=\sum_{d,d_i}\left[x_1^{d_1}x_2^{d_2}x_3^{d_3}x_4^{d_4}q^{d}\prod_{i=1}^4\left(\sum_{m_i=0}^{\infty}c_{d_i+m_{i}-1,m_i}(x_i^2q)^{m_i}\right)\right]N^d_{d_1,d_2,d_3,d_4}\\
&=\sum_{d,d_i}\left[x_1^{d_1}x_2^{d_2}x_3^{d_3}x_4^{d_4}q^{d}\prod_{i=1}^4\left(\frac{1-\sqrt{1-4x_{i}^2q}}{2x_{i}^2q}\right)^{d_i}\right]N^d_{d_1,d_2,d_3,d_4}\\
&=\sum_{d,d_i}\left[q^{d}\prod_{i=1}^4\left(\frac{1-\sqrt{1-4x_{i}^2q}}{2x_{i}q}\right)^{d_i}\right]N^d_{d_1,d_2,d_3,d_4}\\
&=N\left(\frac{1-\sqrt{1-4x_{1}^2q}}{2x_{1}q},\frac{1-\sqrt{1-4x_{2}^2q}}{2x_{2}q},\frac{1-\sqrt{1-4x_{3}^2q}}{2x_{3}q},1-\frac{\sqrt{1-4x_{4}^2q}}{2x_{4}q},q\right),
\end{align*}
where in the fifth line we have applied Lemma \ref{catalan_gf}.
\end{proof}

Combining Lemma \ref{tildeN_gf_formula} and the second part of Proposition \ref{N_vs_tildeN}, we obtain:

\begin{cor}\label{N_gf_formula}
We have
\begin{dmath}\label{N_gf_formula_eq}
N(x_1,x_2,x_3,x_4,q)=[(6q-1)+(1-4q)\sqrt{1-4q}]\cdot\prod_{i=1}^{4}\left(\frac{x_i}{1-x_i+x_i^2q}\right)^2
\end{dmath}
\end{cor}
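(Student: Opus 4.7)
The plan is to obtain the formula by direct substitution, combining the two inputs named in the corollary's statement. Lemma \ref{tildeN_gf_formula} gives the closed form
\[
\wt{N}(x_1,x_2,x_3,x_4,q)=[(6q-1)+(1-4q)^{3/2}]\cdot\prod_{i=1}^{4}\left(\frac{x_i}{1-x_i}\right)^2,
\]
and the second identity in Proposition \ref{N_vs_tildeN} recovers $N$ from $\wt{N}$ by substituting $x_i\mapsto y_i:=x_i/(1+x_i^2q)$ in each argument. So my proposal is simply to carry out this substitution and simplify each factor, one $i$ at a time.

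For each $i$, I would compute
\[
1-y_i=1-\frac{x_i}{1+x_i^2q}=\frac{1-x_i+x_i^2q}{1+x_i^2q},
\]
so that
\[
\frac{y_i}{1-y_i}=\frac{x_i}{1-x_i+x_i^2q},\qquad \left(\frac{y_i}{1-y_i}\right)^2=\left(\frac{x_i}{1-x_i+x_i^2q}\right)^2.
\]
Taking the product over $i=1,2,3,4$ produces exactly the product appearing on the right-hand side of (\ref{N_gf_formula_eq}). Meanwhile the prefactor $(6q-1)+(1-4q)^{3/2}$ depends only on $q$ and so is unaffected by the substitution; rewriting $(1-4q)^{3/2}=(1-4q)\sqrt{1-4q}$ puts it in the form stated. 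There is no genuine obstacle here: the only thing to check is the elementary algebraic identity above, so the corollary follows at once by combining Lemma \ref{tildeN_gf_formula} with Proposition \ref{N_vs_tildeN}.
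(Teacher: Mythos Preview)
Your proposal is correct and matches the paper's approach exactly: the paper simply states that the corollary follows by combining Lemma \ref{tildeN_gf_formula} with the second part of Proposition \ref{N_vs_tildeN}, and your substitution $y_i=x_i/(1+x_i^2q)$ together with the simplification $y_i/(1-y_i)=x_i/(1-x_i+x_i^2q)$ is precisely what is needed to make that combination explicit.
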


\begin{lem}\label{rational_function_x_coeffs}
We have
\begin{dmath*}
\left(\frac{x}{1-x+x^2q}\right)^2=\sum_{n=0}^{\infty}\frac{1}{1-4q}\left(\sum_{k+\ell=n}\frac{\alpha^k-\beta^k}{\alpha-\beta}\cdot\frac{\alpha^\ell-\beta^\ell}{\alpha-\beta}\right)x^n
\end{dmath*}
where
\begin{align*}
\alpha&=\frac{1+\sqrt{1-4q}}{2},\\
\beta&=\frac{1-\sqrt{1-4q}}{2}.
\end{align*}
Furthermore, the coefficient of $x^n$ above is a polynomial in $q$ of degree $\displaystyle\left\lfloor\frac{n}{2}\right\rfloor-1$.
\end{lem}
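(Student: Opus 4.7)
The plan is to factor the denominator using the fact that $\alpha,\beta$ are the roots of $t^2-t+q=0$, so that $\alpha+\beta=1$ and $\alpha\beta=q$ give
\[
1-x+x^2q=(1-\alpha x)(1-\beta x).
\]
Partial fractions then yield
\[
\frac{x}{1-x+x^2q}=\frac{1}{\alpha-\beta}\left(\frac{1}{1-\alpha x}-\frac{1}{1-\beta x}\right),
\]
and expanding each summand as a geometric series shows that the coefficient of $x^k$ is $U_k:=(\alpha^k-\beta^k)/(\alpha-\beta)$. Squaring and applying the Cauchy product then gives the claimed formula, once one observes that the overall denominator $(\alpha-\beta)^2=(\alpha+\beta)^2-4\alpha\beta=1-4q$ accounts for the prefactor $1/(1-4q)$.

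For the degree bound, the cleanest route is to square the partial fraction decomposition directly:
\[
\left(\frac{x}{1-x+x^2q}\right)^2=\frac{1}{1-4q}\left[\frac{1}{(1-\alpha x)^2}+\frac{1}{(1-\beta x)^2}-\frac{2}{(1-\alpha x)(1-\beta x)}\right].
\]
Using the standard expansions $(1-\alpha x)^{-2}=\sum_n(n+1)\alpha^n x^n$ and $(1-\alpha x)^{-1}(1-\beta x)^{-1}=\sum_n U_{n+1}x^n$, one extracts the closed form
\[
[x^n]\left(\frac{x}{1-x+x^2q}\right)^2=\frac{(n+1)V_n-2U_{n+1}}{1-4q},
\]
where $V_n:=\alpha^n+\beta^n$. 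Both $V_n$ and $U_{n+1}$ are polynomials in $q$ of degree $\lfloor n/2\rfloor$ (both satisfying the common recurrence $X_{k+1}=X_k-qX_{k-1}$), so the numerator has degree at most $\lfloor n/2\rfloor$. Divisibility by $1-4q$ is seen by specializing to $q=1/4$: there $\alpha=\beta=1/2$, so $V_n=2^{1-n}$ and, by L'H\^{o}pital, $U_{n+1}=(n+1)/2^n$, making $(n+1)V_n-2U_{n+1}=0$. Hence the quotient is a polynomial in $q$ of degree at most $\lfloor n/2\rfloor-1$.

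The main obstacle I anticipate is verifying that the bound $\lfloor n/2\rfloor-1$ is attained, i.e.\ that the leading $q^{\lfloor n/2\rfloor}$ coefficient of $(n+1)V_n-2U_{n+1}$ is nonzero. This is a routine but careful calculation: splitting into cases by the parity of $n$, the recurrence gives $[q^m]V_{2m}=(-1)^m\cdot 2$, $[q^m]U_{2m+1}=(-1)^m$, $[q^m]V_{2m+1}=(-1)^m(2m+1)$, and $[q^m]U_{2m+2}=(-1)^m(m+1)$. Combining these shows the top-degree terms of $(n+1)V_n$ and $2U_{n+1}$ do not cancel for $n\ge 2$, completing the proof.
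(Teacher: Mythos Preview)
Your argument is correct. For the identity itself, your route via partial fractions on the unsquared factor and the Cauchy product is essentially the same as the paper's, and arguably cleaner: the paper instead decomposes the square directly into a combination of $(1-\alpha x)^{-2}$, $(1-\beta x)^{-2}$, $(1-\alpha x)^{-1}$, $(1-\beta x)^{-1}$ terms and then algebraically identifies the resulting coefficient with $\sum_{k+\ell=n}U_kU_\ell$.

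Where the two proofs genuinely diverge is the degree bound. The paper observes that $\sum_{k+\ell=n}U_kU_\ell$ is a \emph{symmetric} polynomial in $\alpha,\beta$, homogeneous of degree $n-2$, and since $\alpha+\beta=1$ and $\alpha\beta=q$, it is therefore a polynomial in $q$ of degree $\lfloor(n-2)/2\rfloor=\lfloor n/2\rfloor-1$; this takes one line. Your approach instead passes through the closed form $\bigl((n+1)V_n-2U_{n+1}\bigr)/(1-4q)$, checks divisibility at $q=1/4$, and then computes leading coefficients via the recurrence. This is more labor, but it buys something the paper's argument does not actually supply: you verify that the degree is \emph{exactly} $\lfloor n/2\rfloor-1$ (for $n\ge2$), whereas the symmetric-polynomial argument, as written, only gives the upper bound. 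In the paper only the upper bound is used downstream (to discard the $(6q-1)$ term in Corollary~\ref{N_coeff_onevar}), so this extra precision is not needed, but it is a nice bonus.
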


\begin{proof}
One verifies by a straightforward computation that
\begin{align*}
&\left(\frac{x}{1-x+x^2q}\right)^2\\
=&\frac{1}{1-4q}\left[\left(\frac{1}{1-\alpha x}\right)^2+\left(\frac{1}{1-\beta x}\right)^2-\left(\frac{1}{1-\alpha x}+\frac{1}{1-\beta x}\right)-\frac{1}{\sqrt{1-4q}}\left(\frac{1}{1-\alpha x}-\frac{1}{1-\beta x}\right)\right]\\
=&\sum_{n=0}^{\infty}\frac{1}{1-4q}\left[n(\alpha^n+\beta^n)-\frac{1}{\sqrt{1-4q}}(\alpha^n-\beta^n)\right]x^n.
\end{align*}
Then, note that
\begin{align*}
&\frac{1}{1-4q}\left[n(\alpha^n+\beta^n)-\frac{1}{\sqrt{1-4q}}(\alpha^n-\beta^n)\right]\\
=&\frac{1}{(\alpha-\beta)^2}\left[n(\alpha^n+\beta^n)-(\alpha+\beta)\cdot\frac{\alpha^n-\beta^n}{\alpha-\beta}\right]\\
=&\sum_{k+\ell=n}\frac{\alpha^k-\beta^k}{\alpha-\beta}\cdot\frac{\alpha^\ell-\beta^\ell}{\alpha-\beta},
\end{align*}
which is a symmetric polynomial of degree $n-2$ in $\alpha$ and $\beta$. Because $\alpha+\beta=1$ and $\alpha\beta=1-4q$, the coefficient of $x^n$ is thus a polynomial of degree
\begin{equation*}
\left\lfloor\frac{n-2}{2}\right\rfloor=\left\lfloor\frac{n}{2}\right\rfloor-1
\end{equation*}
in $q$, as claimed.
\end{proof}

\begin{cor}\label{N_coeff_onevar}
We have
\begin{equation*}
N_{d_1,d_2,d_3,d_4}=(1-4q)^{3/2}\cdot\prod_{i=1}^{4}\left(\sum_{j=0}^{d_1-2}s_{j}(\alpha,\beta)s_{d_i-2-j}(\alpha,\beta)\right)[q^d],
\end{equation*}
where 
\begin{equation*}
s_{j}(x,y)=\frac{x^{j+1}-y^{j+1}}{x-y}
\end{equation*}
is a Schur polynomial in two variables. 
\end{cor}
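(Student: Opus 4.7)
The plan is to extract the coefficient $[x_1^{d_1}x_2^{d_2}x_3^{d_3}x_4^{d_4}q^d]$ from the generating function identity of Corollary \ref{N_gf_formula}, and then use the degree bound in Lemma \ref{rational_function_x_coeffs} to kill the contribution of the polynomial factor $(6q-1)$.

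Applying Lemma \ref{rational_function_x_coeffs} to each of the four factors of the product in Corollary \ref{N_gf_formula}, I would compute
\begin{equation*}
[x_i^{d_i}]\left(\frac{x_i}{1-x_i+x_i^2q}\right)^2 = \sum_{j=0}^{d_i-2}s_j(\alpha,\beta)s_{d_i-2-j}(\alpha,\beta),
\end{equation*}
and multiplying by the $q$-dependent prefactor from Corollary \ref{N_gf_formula} would yield
\begin{equation*}
N_{d_1,d_2,d_3,d_4} = [q^d]\left[(6q-1)+(1-4q)^{3/2}\right] Q(q),
\end{equation*}
where $Q(q):=\prod_{i=1}^{4}\sum_{j=0}^{d_i-2}s_j(\alpha,\beta)s_{d_i-2-j}(\alpha,\beta)$ is a polynomial in $q$.

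The decisive observation is that $\deg_q Q(q)\le d-2$. Indeed, by the degree assertion of Lemma \ref{rational_function_x_coeffs}, each factor of $Q$ has $q$-degree $\lfloor d_i/2\rfloor-1$, and hence
\begin{equation*}
\deg_q Q(q)\le\sum_{i=1}^{4}\left(\left\lfloor\frac{d_i}{2}\right\rfloor-1\right)\le\frac{d_1+d_2+d_3+d_4}{2}-4 = d-2,
\end{equation*}
where the last equality is the numerical constraint $d_1+d_2+d_3+d_4=2d+4$. Consequently $(6q-1)Q(q)$ is a polynomial of $q$-degree at most $d-1$, so $[q^d](6q-1)Q(q)=0$, and we conclude that $N_{d_1,d_2,d_3,d_4}=[q^d](1-4q)^{3/2}Q(q)$, which is the claim. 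The entire argument is routine coefficient extraction followed by a degree count; no significant technical obstacle is anticipated, since the bound is tight precisely because of the constraint relating $d$ to the $d_i$.
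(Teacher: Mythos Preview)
Your proposal is correct and follows essentially the same approach as the paper's own proof: extract the $x_i^{d_i}$-coefficients via Lemma~\ref{rational_function_x_coeffs}, then use the degree bound $\sum_i(\lfloor d_i/2\rfloor-1)\le d-2$ (coming from $d_1+\cdots+d_4=2d+4$) to show that the $(6q-1)$ part of the prefactor contributes nothing to the $q^d$-coefficient. Your write-up is slightly more explicit than the paper's, but the argument is the same.
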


\begin{proof}
This is a consequence of Corollary \ref{N_gf_formula} and Lemma \ref{rational_function_x_coeffs}. We may ignore the contribution of the $(6q-1)$ term appearing on the right hand side of (\ref{N_gf_formula_eq}), because, by the last statement in Lemma \ref{rational_function_x_coeffs}, the degree of the coefficient of $x_1^{d_1}x_2^{d_2}x_3^{d_3}x_4^{d_4}$ in
\begin{equation*}
\prod_{i=1}^{4}\left(\frac{x_i}{1-x_i+x_i^2q}\right)^2
\end{equation*}
as a polynomial in $q$ is
\begin{equation*}
\sum_{i=1}^{4}\left(\left\lfloor\frac{d_i}{2}\right\rfloor-1\right)\le d-2,
\end{equation*}
and thus contributes nothing to the $q^d$ coefficient after multiplication by $(6q-1)$.
\end{proof}

\vskip 3pt

\subsection{Schubert cycle formula}

We now relate the formula in Corollary \ref{N_coeff_onevar} to intersection numbers on the Grassmannian to prove Theorem \ref{genus1_unweighted_count}(a).

\begin{lem}\label{integral_formula}
Let $d$ be a positive integer, and let $f(x,y)$ be a homogeneous symmetric polynomial with $\deg(f)\le 2d-2$. Then, we have
\begin{equation*}
\left(-\frac{1}{2}(1-4q)^{1/2}\cdot f(\alpha,\beta)\right)[q^d]=\int_{\Gr(2,d+1)}f(x,y)\cdot(x+y)^{2d-2-\deg(f)},
\end{equation*}
where as before we put
\begin{align*}
\alpha&=\frac{1+\sqrt{1-4q}}{2},\\
\beta&=\frac{1-\sqrt{1-4q}}{2},
\end{align*}
and the integrand on the right hand side is viewed as a top cohomology class on $\Gr(2,d+1)$ via the identification of Schur polynomials $s_j$ and Schubert cycles $\sigma_j$.
\end{lem}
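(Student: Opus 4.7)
The plan is to reduce the identity to a single-variable computation by writing $f(x,y)$ in the basis $\{e_1^i e_2^j\}$, where $e_1 = x+y$ and $e_2 = xy$ are the elementary symmetric polynomials. By linearity (and the fundamental theorem of symmetric polynomials), it will suffice to verify the lemma when $f = e_1^i e_2^j$ is a single monomial; homogeneity then imposes $i + 2j = \deg f \leq 2d-2$, so in particular $j \le d-1$.

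For the left-hand side, the relations $\alpha + \beta = 1$ and $\alpha\beta = q$ give $f(\alpha,\beta) = q^j$, so the LHS reduces to $[q^{d-j}]\bigl(-\tfrac{1}{2}(1-4q)^{1/2}\bigr)$. Expanding the square root via the binomial series produces the classical Catalan generating function
\begin{equation*}
-\tfrac{1}{2}(1-4q)^{1/2} = -\tfrac{1}{2} + \sum_{k \geq 1} C_{k-1}\, q^k,
\end{equation*}
so the LHS equals $C_{d-j-1}$. For the right-hand side, under $e_1 \mapsto \sigma_1$ and $e_2 \mapsto \sigma_{1,1}$, the integrand becomes $\sigma_{1,1}^{\,j} \cdot \sigma_1^{2d-2-2j}$ on $\Gr(2,d+1)$. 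The Pieri rule $\sigma_{1,1}\cdot \sigma_{a,b} = \sigma_{a+1,b+1}$ iterates to $\sigma_{1,1}^{\,j} = \sigma_{j,j}$, so the RHS becomes $\int_{\Gr(2,d+1)} \sigma_{j,j}\,\sigma_1^{2d-2-2j}$, which (cf.\ Lemma~\ref{LR_coeffs}) counts standard Young tableaux of the skew shape $(d-1,d-1)/(j,j)$. A standard ballot/reflection argument identifies this count with $C_{d-1-j}$, matching the LHS.

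There is no serious obstacle once the basis $\{e_1^i e_2^j\}$ is chosen, as both sides collapse to the same Catalan number. Working instead in the Schur basis $\{s_{a,b}\}$ is also possible: the LHS can be computed by the Lagrange-inversion substitution $q = t(1-t)$, $\alpha = 1-t$, $\beta = t$, after which the identity $h_{a-b}(1-t,t) = \bigl((1-t)^{a-b+1} - t^{a-b+1}\bigr)/(1-2t)$ reduces the LHS to a sum of binomial coefficients; the RHS is the skew SYT count $\binom{2d-2-a-b}{d-1-a} - \binom{2d-2-a-b}{d-b}$, and Pascal's rule matches the two. This alternative route is where the bookkeeping would become the main obstacle, but it is avoided entirely by the elementary-symmetric-monomial basis reduction.
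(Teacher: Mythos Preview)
Your proof is correct and follows essentially the same approach as the paper. Both arguments reduce by linearity to the monomial basis $(xy)^j(x+y)^i$ in the elementary symmetric polynomials, use $\alpha+\beta=1$ and $\alpha\beta=q$ to evaluate the left-hand side as the $q^{d-j}$ coefficient of $-\tfrac{1}{2}(1-4q)^{1/2}$, and identify both sides with the Catalan number $C_{d-1-j}$ via $\int_{\Gr(2,d+1)}\sigma_{11}^{j}\sigma_1^{2d-2-2j}$.
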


\begin{proof}
The vector space of symmetric polynomials $f(x,y)$ is spanned by polynomials of the form 
\begin{equation*}
f(x,y)=(xy)^m(x+y)^n=s_{11}(x,y)^m\cdot s_1(x,y)^n,
\end{equation*}
where $2m+n=2d-2$; it suffices to prove the claim for such $f$. Note that $f(\alpha,\beta)=q^m$. Now,
\begin{align*}
\int_{\Gr(2,d+1)}f(x,y)\cdot(x+y)^{2d-2-\deg(f)}&=\int_{\Gr(2,d+1)}(xy)^m\cdot(x+y)^{2d-2-2m}\\
&=\int_{\Gr(2,d+1)}\sigma_{11}^m\cdot\sigma_1^{2d-2-2m}\\
&=C_{d-m-1}\\
&=-\frac{1}{2}(1-4q)^{1/2}[q^{d-m}]\\
&=-\frac{1}{2}f(\alpha,\beta)(1-4q)^{1/2}[q^{d-m}],
\end{align*}
where we have applied the Pieri Rule and Lemma \ref{catalan_gf}.
\end{proof}

\begin{prop}\label{N_schubert_formula}
We have
\begin{equation*}
N_{d_1,d_2,d_3,d_4}=\int_{\Gr(2,d+1)}\left(\prod_{i=1}^{4}\sum_{a_i+b_i=d_i-2}\sigma_{a_i}\sigma_{b_i}\right)(8\sigma_{11}-2\sigma_1^2).
\end{equation*}
\end{prop}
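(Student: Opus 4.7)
The plan is to combine Corollary \ref{N_coeff_onevar} with Lemma \ref{integral_formula}. Corollary \ref{N_coeff_onevar} expresses $N_{d_1,d_2,d_3,d_4}$ as a coefficient of $(1-4q)^{3/2}\cdot f(\alpha,\beta)$, where
\begin{equation*}
f(x,y)=\prod_{i=1}^{4}\sum_{j=0}^{d_i-2}s_{j}(x,y)\,s_{d_i-2-j}(x,y)
\end{equation*}
is a symmetric polynomial of degree $\sum_{i=1}^{4}(d_i-2)=2d-4$. Lemma \ref{integral_formula} computes the $[q^d]$ coefficient of $(1-4q)^{1/2}$ times a symmetric polynomial of degree $\le 2d-2$ as an intersection on $\Gr(2,d+1)$. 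So the task reduces to absorbing one factor of $(1-4q)$ into $f$ in a way compatible with the Schur$\leftrightarrow$Schubert dictionary.

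The key identity is that $\alpha+\beta=1$ and $\alpha\beta=q$, so $s_1(\alpha,\beta)=1$ and $s_{11}(\alpha,\beta)=q$, giving
\begin{equation*}
1-4q=s_1^2(\alpha,\beta)-4\,s_{11}(\alpha,\beta).
\end{equation*}
Therefore I would define the symmetric polynomial $g(x,y)=(s_1(x,y)^2-4\,s_{11}(x,y))\cdot f(x,y)$, which has degree $2+(2d-4)=2d-2$, and rewrite
\begin{equation*}
N_{d_1,d_2,d_3,d_4}=\left[(1-4q)^{1/2}\cdot g(\alpha,\beta)\right][q^d].
\end{equation*}

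Now I apply Lemma \ref{integral_formula} directly to $g$ (with $\deg(g)=2d-2$, so the $(x+y)$ correction factor has exponent $0$ and disappears):
\begin{equation*}
\left[(1-4q)^{1/2}\cdot g(\alpha,\beta)\right][q^d]=-2\int_{\Gr(2,d+1)}g(x,y)=-2\int_{\Gr(2,d+1)}(\sigma_1^2-4\sigma_{11})\cdot f(x,y),
\end{equation*}
where we interpret $s_j\leftrightarrow\sigma_j$, $s_{11}\leftrightarrow\sigma_{11}$ in the top-degree Chow class. Rearranging gives exactly the asserted formula, since $f(x,y)$ becomes $\prod_{i=1}^{4}\sum_{a_i+b_i=d_i-2}\sigma_{a_i}\sigma_{b_i}$.

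There is essentially no obstacle here beyond checking that the degree bookkeeping matches the hypothesis of Lemma \ref{integral_formula}. The only thing worth verifying carefully is that the degree-$(2d-4)$ polynomial $f$ indeed becomes degree $2d-2$ after multiplying by $s_1^2-4s_{11}$, so that no compensating power of $\sigma_1=x+y$ survives; this is guaranteed by the hypothesis $d_1+d_2+d_3+d_4=2d+4$.
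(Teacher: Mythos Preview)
Your argument is correct and follows the same route as the paper: both start from Corollary \ref{N_coeff_onevar} and reduce to Lemma \ref{integral_formula} applied to the degree-$(2d-4)$ symmetric polynomial $f(x,y)=\prod_i\sum_j s_j s_{d_i-2-j}$. The only cosmetic difference is in how the extra factor of $(1-4q)$ is absorbed: you recognize $1-4q=s_1(\alpha,\beta)^2-4s_{11}(\alpha,\beta)$ and apply Lemma \ref{integral_formula} once to the degree-$(2d-2)$ polynomial $g=(s_1^2-4s_{11})f$, whereas the paper writes $(1-4q)^{3/2}=(-2+8q)\cdot\bigl(-\tfrac12(1-4q)^{1/2}\bigr)$, extracts the $[q^{d-1}]$ and $[q^d]$ coefficients separately, and then reassembles the two resulting Grassmannian integrals using $\int_{\Gr(2,d)}(-)=\int_{\Gr(2,d+1)}(-)\cdot\sigma_{11}$. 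Your packaging is slightly more direct, but the content is identical.
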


\begin{proof}
Applying Corollary \ref{N_coeff_onevar},
\begin{align*}
N_{d_1,d_2,d_3,d_4}&=(1-4q)^{3/2}\prod_{i=1}^{4}\left(\sum_{j=0}^{d_1-2}s_{j}(\alpha,\beta)s_{d_i-2-j}(\alpha,\beta)\right)[q^d]\\
&=(-2+8q)\cdot\left[-\frac{1}{2}(1-4q)^{1/2}\prod_{i=1}^{4}\left(\sum_{j=0}^{d_1-2}s_{j}(\alpha,\beta)s_{d_i-2-j}(\alpha,\beta)\right)\right][q^d]\\
&=8\cdot\left[-\frac{1}{2}(1-4q)^{1/2}\prod_{i=1}^{4}\left(\sum_{j=0}^{d_1-2}s_{j}(\alpha,\beta)s_{d_j-2-j}(\alpha,\beta)\right)\right][q^{d-1}]\\
&\quad-2\cdot\left[-\frac{1}{2}(1-4q)^{1/2}\prod_{i=1}^{4}\left(\sum_{j=0}^{d_1-2}s_{j}(\alpha,\beta)s_{d_i-2-j}(\alpha,\beta)\right)\right][q^d]\\
&=8\int_{\Gr(2,d)}\left(\prod_{i=1}^{4}\sum_{a_i+b_i=d_i-2}\sigma_{a_i}\sigma_{b_i}\right)-2\int_{\Gr(2,d+1)}\left(\prod_{i=1}^{4}\sum_{a_i+b_i=d_i-2}\sigma_{a_i}\sigma_{b_i}\right)\sigma_1^2\\
&=\int_{\Gr(2,d+1)}\left(\prod_{i=1}^{4}\sum_{a_i+b_i=d_i-2}\sigma_{a_i}\sigma_{b_i}\right)(8\sigma_{11}-2\sigma_1^2),
\end{align*}
where in the second to last step we have applied Lemma \ref{integral_formula} to the polynomial
\begin{equation*}
f(x,y)=\prod_{i=1}^{4}\sum_{j=0}^{d_i-2}s_j(x,y)s_{d_i-2-j}(x,y)
\end{equation*}
of degree $(d_1-2)+\cdots+(d_4-2)=2(d-1)-2$.
\end{proof}

\vskip 3pt

\subsection{Laurent polynomial formula}
Here, we expand the formula in Proposition \ref{N_schubert_formula} to prove Theorem \ref{genus1_unweighted_count}(b).

\begin{lem}\label{integral_fourfold_product}
Let $n_1, n_2, n_3, n_4$ be integers satisfying $0\le n_i\le d-1$ and $n_1+n_2+n_3+n_4=2d-4$. Then, we have:
\begin{equation*}
\int_{\Gr(2,d)}\sigma_{n_1}\sigma_{n_2}\sigma_{n_3}\sigma_{n_4}=\min(d-n_1-1,n_4+1)
\end{equation*}
\end{lem}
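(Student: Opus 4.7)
The plan is to reduce the intersection to a lattice-point count via two applications of Pieri's rule followed by Poincar\'e duality on $\Gr(2,d)$. Since the integral is symmetric in $n_1,\ldots,n_4$, we may assume $n_1\ge n_2\ge n_3\ge n_4$, so that $n_1$ and $n_4$ in the statement really denote the maximum and minimum of the ramification indices.

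First I would apply Pieri's rule to write $\sigma_{n_1}\sigma_{n_2}=\sum_{j}\sigma_{n_1+n_2-j,\,j}$ with $\max(0,\,n_1+n_2-d+2)\le j\le n_2$, and analogously $\sigma_{n_3}\sigma_{n_4}=\sum_{j'}\sigma_{n_3+n_4-j',\,j'}$ with $\max(0,\,n_3+n_4-d+2)\le j'\le n_4$. Poincar\'e duality on $\Gr(2,d)$ identifies the class dual to $\sigma_{a,b}$ as $\sigma_{d-2-b,\,d-2-a}$, so after expanding the product each cross term contributes $1$ to the integral precisely when $j'=j-(n_1+n_2-d+2)$, and $0$ otherwise. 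The problem therefore reduces to counting the integers $j$ satisfying \emph{both} Pieri ranges under this substitution.

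The sum condition $n_1+n_2+n_3+n_4=2d-4$ combined with the ordering forces $n_1+n_2\ge d-2\ge n_3+n_4$, so the lower bound $\max(0,\,n_3+n_4-d+2)=0$ on $j'$ is automatic, while the upper bound $j'\le n_4$ translates to $j\le n_4+(n_1+n_2-d+2)$. The resulting interval for $j$ is
\begin{equation*}
n_1+n_2-d+2\;\le\;j\;\le\;\min\bigl(n_2,\;n_4+n_1+n_2-d+2\bigr),
\end{equation*}
whose number of integer points equals $\min(d-n_1-1,\,n_4+1)$, as desired. The only bookkeeping point is identifying which of the two upper bounds is active; the dichotomy is governed by the sign of $n_1+n_4-d+2$, and I do not expect any genuine obstacle here. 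Boundary degeneracies such as $n_1=d-1$ (where $\sigma_{n_1}$ vanishes on $\Gr(2,d)$) are handled automatically, since the formula then returns $0$.
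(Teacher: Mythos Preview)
Your proof is correct and takes essentially the same approach as the paper's: both reduce the integral via Pieri's rule and Poincar\'e duality on $\Gr(2,d)$ to a count of lattice points in an interval, arriving at the same dichotomy $\min(d-n_1-1,\,n_4+1)$. The only organizational difference is that you expand both $\sigma_{n_1}\sigma_{n_2}$ and $\sigma_{n_3}\sigma_{n_4}$ by Pieri and then pair via duality, whereas the paper expands $\sigma_{n_1}\sigma_{n_2}$, multiplies termwise by $\sigma_{n_3}$, and extracts the coefficient dual to $\sigma_{n_4}$; your symmetric version has the minor advantage of absorbing the boundary case $n_1=d-1$ automatically rather than treating it separately.
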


\begin{proof}
Without loss of generality, suppose that $n_1\ge n_2\ge n_3\ge n_4$, so that $n_1+n_2\ge d-2$. If $n_1=d_1-1$, then $\sigma_{n_1}=0=\min(d-n_1-1,n_4+1)$, so assume that $n_1\le d_1-2$. By the Pieri Rule, we have
\begin{equation*}
\sigma_{n_1}\sigma_{n_2}=\sigma_{n_1,n_2}+\sigma_{n_1+1,n_2-1}+\cdots+\sigma_{d-2,n_1+n_2-d+2}.
\end{equation*}
We wish to express the product of this class with $\sigma_{n_3}$ in the Schubert cycle basis and extract the coefficient of $\sigma_{d-2,d-2-n_{4}}$. By the Pieri rule, each product $\sigma_{n_1+i,n_2-i}\sigma_{n_3}$ will be a sum of Schubert cycles with multiplicity 1, and $\sigma_{d-2,d-2-n_4}$ appears if and only if $d-2-n_4\le n_1+i$. If $d-2-n_4-n_1\le 0$, or equivalently $d-n_1-1\le n_4+1$, then this is true for all of the terms above, and we conclude that 
\begin{equation*}
\int_{\Gr(2,d)}\sigma_{n_1}\sigma_{n_2}\sigma_{n_3}\sigma_{n_4}=d-n_1-1.
\end{equation*}
Otherwise, the number of terms for which $d-2-n_4\le n_1+i$ is $n_4+1$, and 
\begin{equation*}
\int_{\Gr(2,d)}\sigma_{n_1}\sigma_{n_2}\sigma_{n_3}\sigma_{n_4}=n_4+1.
\end{equation*}
This establishes the lemma.
\end{proof}

\begin{lem}\label{integral_against_magic_class}
Let $n_1\ge n_2\ge n_3\ge n_4\ge0$ be integers satisfying $n_1+n_2+n_3+n_4=2d-4$.
\begin{equation*}
\int_{\Gr(2,d+1)}\sigma_{n_1}\sigma_{n_2}\sigma_{n_3}\sigma_{n_4}(8\sigma_{11}-2\sigma_1^2)=
        \begin{cases} 
6 & n_1=n_2=n_3=n_4\\
4 & n_1 = n_2 \neq n_3 = n_4\\
2 & n_1+n_4 = n_2+n_3\text{ and }n_1\neq n_2\\
-2 & n_1 = n_2+n_3+n_4+2\\
0 & \text{otherwise}
   \end{cases}
   \end{equation*}
\end{lem}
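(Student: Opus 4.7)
The plan is to express the integral as a coefficient extraction in a single explicit polynomial. Writing $x, y$ for the Chern roots of the tautological quotient bundle on $\Gr(2, d+1)$, so that $\sigma_1 = x + y$ and $\sigma_{11} = xy$, I first observe the identity
\begin{equation*}
8\sigma_{11} - 2\sigma_1^2 = 8xy - 2(x+y)^2 = -2(x-y)^2.
\end{equation*}
Combining the Jacobi-Trudi formula $s_{a,b}(x,y) = (x^{a+1}y^b - x^b y^{a+1})/(x-y)$ with the fact that $\int_{\Gr(2, d+1)} s_\lambda = \delta_{\lambda, (d-1, d-1)}$, one verifies directly that for any symmetric homogeneous polynomial $f(x, y)$ of degree $2d - 2$,
\begin{equation*}
\int_{\Gr(2, d+1)} f(x, y) = [x^d y^{d-1}]\bigl(f(x, y)(x - y)\bigr).
\end{equation*}

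Applying this with $f = -2(x-y)^2 \prod_i s_{n_i}(x, y)$ and using $s_n(x,y) = (x^{n+1}-y^{n+1})/(x-y)$, the integral reduces to
\begin{equation*}
-2\,[x^d y^{d-1}] \frac{\prod_{i=1}^{4}(x^{a_i} - y^{a_i})}{x - y}, \qquad a_i := n_i + 1.
\end{equation*}
Let $Q(x,y) = \prod_i(x^{a_i} - y^{a_i})$, which is symmetric, so $P := Q/(x-y)$ is antisymmetric. Expanding $P = \sum_{r > s,\; r + s = 2d - 1} d_{r,s}(x^r y^s - x^s y^r)$, one has $[x^d y^{d-1}]P = d_{d, d-1}$, and a direct computation with $(x-y)P = Q$ shows that only the basis element $x^d y^{d-1} - x^{d-1}y^d$ contributes to $[x^d y^d] Q$, with coefficient $-2$. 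Hence the original integral equals $[x^d y^d] Q$.

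Expanding the product directly, $Q = \sum_{T \subseteq \{1,2,3,4\}}(-1)^{|T|}\, x^{A_T}\, y^{A - A_T}$ with $A_T = \sum_{i \in T} a_i$ and $A = 2d$, so the coefficient of $x^d y^d$ equals $\sum_{T\,:\,A_T = d}(-1)^{|T|}$. I split this sum by $|T|$: contributions from $|T| \in \{0, 4\}$ vanish (as $d \ge 2$); $|T| = 1$ and $|T| = 3$ each contribute $-M$, where $M := \#\{i : n_i = d-1\}$; and $|T| = 2$ contributes $+2P$, where $P$ counts the unordered set partitions $\{\{i,j\},\{k,l\}\}$ of $\{1,2,3,4\}$ with $a_i + a_j = d$, equivalently $n_i + n_j = d - 2$. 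Thus the integral equals $2(P - M)$.

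The remaining verification is a short case check. Under the ordering $n_1 \ge n_2 \ge n_3 \ge n_4$, the three set partitions $\{\{1,2\},\{3,4\}\}$, $\{\{1,3\},\{2,4\}\}$, $\{\{1,4\},\{2,3\}\}$ have both pairs summing to $d-2$ precisely when, respectively, all the $n_i$ are equal (case (i) only); $n_1 = n_2$ and $n_3 = n_4$ (cases (i) and (ii)); or $n_1 + n_4 = n_2 + n_3$ (cases (i), (ii), and (iii)). Meanwhile $M \ge 1$ forces $n_1 = d - 1$, which combined with $\sum n_i = 2d - 4$ is exactly the condition $n_1 = n_2 + n_3 + n_4 + 2$ of case (iv), in which $M = 1$ and no pair can sum to $d$, so $P = 0$. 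Cases (i)--(v) therefore give $(P, M) = (3,0), (2,0), (1,0), (0,1), (0,0)$, hence the values $6, 4, 2, -2, 0$. The only point requiring care is confirming that the four exceptional conditions are pairwise disjoint and exhaust all configurations with $(P, M) \ne (0, 0)$; this follows from elementary inequalities given the ordering of the $n_i$.
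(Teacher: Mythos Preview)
Your argument is correct and takes a genuinely different route from the paper's.

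The paper proceeds by brute-force Schubert calculus: it first reduces via Pieri to combinations of integrals $\int_{\Gr(2,d')}\sigma_{m_1}\sigma_{m_2}\sigma_{m_3}\sigma_{m_4}$, invokes the closed form for such four-fold products (Lemma~\ref{integral_fourfold_product}), and then works through a lengthy case analysis on the relative sizes of the $n_i$, treating separately the boundary situations $n_1>d-1$, $n_1=d-1$, $n_1\le 1$, and several sub-cases within the generic range. Your proof instead passes to Chern roots, uses the identity $8\sigma_{11}-2\sigma_1^2=-2(x-y)^2$, and reduces the entire integral to the single coefficient $[x^dy^d]\prod_i(x^{n_i+1}-y^{n_i+1})$. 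Expanding this product over subsets $T\subseteq\{1,2,3,4\}$ converts the problem into counting solutions of $\sum_{i\in T}(n_i+1)=d$, which yields the formula $2(P-M)$ and makes the five-way case split essentially transparent.

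What your approach buys is a dramatic shortening of the casework and the elimination of any dependence on Lemma~\ref{integral_fourfold_product}; it also handles the degenerate situations $n_1\ge d$ automatically, since no subset can then satisfy the required equality. What the paper's approach buys is that it stays entirely inside standard Schubert calculus and does not require the reader to be comfortable with the symmetric-function/coefficient-extraction dictionary; it also makes the appearance of $\int\sigma_{n_1}\sigma_{n_2}\sigma_{n_3}\sigma_{n_4}$ explicit, which connects more visibly to the surrounding material. Both are valid, but yours is the cleaner computation.
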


\begin{proof}
Without loss of generality, suppose that $n_1\ge n_2\ge n_3\ge n_4$. First, if $n_1>d-1$, then $\sigma_{n_1}=0$, and it is clear that none of the first four conditions on the right hand side can be satisfied. If $n_1=d-1$, then we are in the fourth case on the right hand side, as $n_2+n_3+n_4=(2d-4)-(d-1)=d-3$. In this case, the Pieri rule implies that $\sigma_{d-1}\sigma_{11}=0$ and
\begin{equation*}
\int_{\Gr(2,d+1)}\sigma_{d-1}\sigma_{n_2}\sigma_{n_3}\sigma_{n_4}\sigma_{1}^2=1
\end{equation*}
so again the Lemma holds.

We next dispose of the case $n_1\le 1$: the possibilities are $(n_1,n_2,n_3,n_4)=(0,0,0,0),(1,1,0,0),(1,1,1,1)$, and one easily checks that the Lemma holds here.

Thus, we assume that the $d-2\ge n_1\ge n_2\ge n_3\ge n_4\ge0$ and $n_1\ge 2$. Applying the Pieri rule and Lemma \ref{integral_fourfold_product},
\begin{align*}
&\quad\int_{\Gr(2,d+1)}\sigma_{n_1}\sigma_{n_2}\sigma_{n_3}\sigma_{n_4}(8\sigma_{11}-2\sigma_1^2)\\
&=\int_{\Gr(2,d+1)}\sigma_{n_1}\sigma_{n_2}\sigma_{n_3}\sigma_{n_4}(6\sigma_{11}-2\sigma_{11})\\
&=6\int_{\Gr(2,d)}\sigma_{n_1}\sigma_{n_2}\sigma_{n_3}\sigma_{n_4}-2\int_{\Gr(2,d+1)}(\sigma_{n_1+2}+\sigma_{n_1+1,1}+\sigma_{n_1,2})\sigma_{n_2}\sigma_{n_3}\sigma_{n_4}\\
&=4\int_{\Gr(2,d)}\sigma_{n_1}\sigma_{n_2}\sigma_{n_3}\sigma_{n_4}-2\int_{\Gr(2,d+1)}\sigma_{n_1+2}\sigma_{n_2}\sigma_{n_3}\sigma_{n_4}-2\int_{\Gr(2,d-1)}\sigma_{n_1-2}\sigma_{n_2}\sigma_{n_3}\sigma_{n_4}\\
&=4\min(d-n_1-1,n_4+1)-2\min(d-n_1-2,n_4+1)-2\int_{\Gr(2,d-1)}\sigma_{n_1-2}\sigma_{n_2}\sigma_{n_3}\sigma_{n_4}.
\end{align*}

We now consider the first, second, third, and fifth cases separately: as $n_1\le d-2$, we cannot have $n_1=n_2+n_3+n_4+2$. Suppose first that $n_1=n_2=n_3=n_4=n$, and $d=2n+2$. We then have 
\begin{align*}
&\quad4\min(d-n_1-1,n_4+1)-2\min(d-n_1-2,n_4+1)-2\int_{\Gr(2,d-1)}\sigma_{n_1-2}\sigma_{n_2}\sigma_{n_3}\sigma_{n_4}\\
&=4\min(n,n+1)-2\min(n+1,n+1)-2\int_{\Gr(2,2n+1)}\sigma_n^3\sigma_{n-2}\\
&=4n-2(n+1)-2\min(n,n-1)\\
&=6.
\end{align*}

Next, consider the case $n_1=n_2\neq n_3=n_4$, so that $d=n_1+n_3+2$. We have
\begin{align*}
&\quad4\min(d-n_1-1,n_4+1)-2\min(d-n_1-2,n_4+1)-2\int_{\Gr(2,d-1)}\sigma_{n_1-2}\sigma_{n_2}\sigma_{n_3}\sigma_{n_4}\\
&=4\min(n_3+1,n_4+1)-2\min(n_3,n_4+1)-2\int_{\Gr(2,n_1+n_3+1)}\sigma_{n_1}\sigma_{n_1-2}\sigma_{n_3}^2.
\end{align*}
To evaluate the last term, we consider two sub-cases: if $n_1-2\ge n_3$, then by Lemma \ref{integral_fourfold_product}, we have
\begin{equation*}
\int_{\Gr(2,n_1+n_3+1)}\sigma_{n_1}\sigma_{n_1-2}\sigma_{n_3}^2=\min(n_3,n_3+1)=n_3.
\end{equation*}
On the other hand, if $n_1-2<n_3$, we must have $n_1-n_3=1$, as $n_1>n_3$. Thus, 
\begin{equation*}
\int_{\Gr(2,n_1+n_3+1)}\sigma_{n_1}\sigma_{n_1-2}\sigma_{n_3}^2=\min(n_1,n_1-1)=n_1-1=n_3.
\end{equation*}
Therefore, in both sub-cases, we have 
\begin{equation*}
\int_{\Gr(2,d+1)}\sigma_{n_1}\sigma_{n_2}\sigma_{n_3}\sigma_{n_4}(8\sigma_{11}-2\sigma_1^2)=4(n_3+1)-2n_3-2n_3=4.
\end{equation*}

Next, consider the case $n_1+n_4=n_2+n_3$ and $n_1\neq n_2$. Then, $d-n_1=n_4+2$. Thus,
\begin{align*}
&\quad4\min(d-n_1-1,n_4+1)-2\min(d-n_1-2,n_4+1)-2\int_{\Gr(2,d-1)}\sigma_{n_1-2}\sigma_{n_2}\sigma_{n_3}\sigma_{n_4}\\
&=4(n_4+1)-2n_4-2\int_{\Gr(2,d-1)}\sigma_{n_1-2}\sigma_{n_2}\sigma_{n_3}\sigma_{n_4}.
\end{align*}
If $n_1-n_2\ge 2$, then the last integral is equal to $\min(d-n_1,n_4+1)=(n_4+1)$, and we immediately deduce the lemma. If, on the other hand, $n_1-n_2=1$, we first note that $n_4\le n_1-2$, or else $n_2=n_3=n_4$, an impossibility. Then, Lemma \ref{integral_fourfold_product} implies that the last term is again equal to $\min(d-n_2-1,n_4+1)=\min(d-n_1,n_4+1)=n_4+1$, so we are done in this case.\\

Finally, suppose $n_1+n_4\neq n_2+n_3$. In particular, we have either $d-n_1-2\ge n_4+1$ or $d-n_1\le n_4+1$. First, assume that $n_1-n_2\ge 2$. Then,
\begin{equation*}
\int_{\Gr(2,d-1)}\sigma_{n_1-2}\sigma_{n_2}\sigma_{n_3}\sigma_{n_4}=\min(d-n_1,n_4+1).
\end{equation*}
Thus, the expression
\begin{equation}\label{three_terms}
4\min(d-n_1-1,n_4+1)-2\min(d-n_1-2,n_4+1)-2\int_{\Gr(2,d-1)}\sigma_{n_1-2}\sigma_{n_2}\sigma_{n_3}\sigma_{n_4}
\end{equation}
is equal to either
\begin{equation*}
4(n_4+1)-2(n_4+1)-2(n_4+1)=0
\end{equation*}
or
\begin{equation*}
4(d-n_1-1)-2(d-n_1-2)-2(d-n_1)=0,
\end{equation*}
so we have the lemma if $n_1-n_2\ge 2$.

Suppose instead that $n_1-n_2=0$ or $n_1-n_2=1$. Then, we can check as before that $n_1-2\ge n_4$. Furthermore, we claim that we must have $d-n_1-2\ge n_4+1$. If not, then we have instead $d-n_1\le n_4+1$, so $n_2+n_4\ge (n_1-1)+n_4\ge d-2=\frac{1}{2}(n_1+n_2+n_3+n_4)$, which is impossible unless $n_1=n_2,n_3=n_4$. From here, one easily evaluates the expression (\ref{three_terms}) as in the previous cases, so we are done.
\end{proof}

\begin{prop}\label{genus1_laurent}
$N_{d_1,d_2,d_3,d_4}$ is the constant term of the Laurent polynomial $P_{d_1-1}P_{d_2-1}P_{d_3-1}P_{d_4-1}$, where
\begin{equation*}
P_r=rq^n+(r-2)q^{r-2}+\cdots+(-r+2)q^{-r+2}+(-r)q^{-r}.
\end{equation*}
\end{prop}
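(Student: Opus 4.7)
The plan is to realize both $N_{d_1,\ldots,d_4}$ (via Proposition \ref{N_schubert_formula}) and the Laurent constant term $[q^0]\prod_i P_{d_i-1}(q)$ as the same antisymmetric monomial coefficient of a single homogeneous polynomial in two formal variables $x_1, x_2$, which I think of as Chern roots of the tautological quotient bundle on $\Gr(2, d+1)$.

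First I will invoke the standard residue formula $\int_{\Gr(2, d+1)} f(x_1,x_2) = [x_1^d x_2^{d-1}](x_1-x_2)f(x_1,x_2)$ valid for any symmetric polynomial $f$ of the right degree. Combined with the elementary identity $8\sigma_{11} - 2\sigma_1^2 = -2(x_1-x_2)^2$ and Proposition \ref{N_schubert_formula}, this rewrites
\[
N_{d_1,\ldots,d_4} = -2\,[x_1^d x_2^{d-1}](x_1-x_2)^3 \prod_{i=1}^{4} F_i(x_1,x_2),
\]
where $F_i = \sum_{a+b=d_i-2} s_a(x_1,x_2) s_b(x_1,x_2)$. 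A direct expansion, using $s_m(x_1,x_2) = (x_1^{m+1}-x_2^{m+1})/(x_1-x_2)$, yields the closed form
\[
A_i(x_1,x_2) := (x_1-x_2)^3 F_i(x_1,x_2) = (d_i-1)(x_1^{d_i+1} - x_2^{d_i+1}) - (d_i+1)\,x_1 x_2\,(x_1^{d_i-1} - x_2^{d_i-1}),
\]
so that, setting $p(x_1,x_2) := -2\prod_i A_i(x_1,x_2)/(x_1-x_2)^9$, the number $N_{d_1,\ldots,d_4}$ equals $[x_1^d x_2^{d-1}]p$.

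Next, I observe that $p$ is a homogeneous polynomial of degree $2d-1$, antisymmetric under $x_1 \leftrightarrow x_2$, so it has a unique expansion $p = \sum_{k \ge 1,\ k\text{ odd}}^{2d-1} c_k\,(x_1^{(2d-1+k)/2}x_2^{(2d-1-k)/2} - x_1^{(2d-1-k)/2}x_2^{(2d-1+k)/2})$, and the coefficient $c_1$ is exactly $[x_1^d x_2^{d-1}]p = N_{d_1,\ldots,d_4}$. The crucial step is then to specialize $(x_1,x_2) = (q,q^{-1})$ and verify the identity
\[
A_i(q,q^{-1}) = (q-q^{-1})^2\,P_{d_i-1}(q),
\]
which yields $p(q,q^{-1}) = -2\prod_i P_{d_i-1}(q)/(q-q^{-1})$, while the antisymmetric expansion gives $p(q,q^{-1}) = \sum_k c_k(q^k - q^{-k})$.

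Finally I multiply both sides by $(q - q^{-1})$ and extract $[q^0]$. Using $(q-q^{-1})(q^k - q^{-k}) = (q^{k+1} + q^{-k-1}) - (q^{k-1} + q^{-k+1})$, whose constant term vanishes unless $k = 1$ (in which case it equals $-2$), the right-hand side contributes $-2c_1$. Matching with the left-hand side $(q-q^{-1})p(q,q^{-1}) = -2\prod_i P_{d_i-1}(q)$ gives $[q^0]\prod_i P_{d_i-1}(q) = c_1 = N_{d_1,\ldots,d_4}$. The one step I expect to require care is verifying the identity $A_i(q,q^{-1}) = (q-q^{-1})^2 P_{d_i-1}(q)$, but this is a direct termwise check: expanding $(q^2 - 2 + q^{-2})P_{d_i-1}(q)$ against the definition $P_r(q) = \sum_{k=0}^{r}(r-2k)q^{r-2k}$, the ``middle'' coefficients telescope to zero, leaving only the four outermost terms which match $A_i(q,q^{-1})$.
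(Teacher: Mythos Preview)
Your proof is correct and takes a genuinely different route from the paper's. The paper proceeds combinatorially: it first establishes Lemma~\ref{integral_against_magic_class}, a five-way case analysis computing $\int_{\Gr(2,d+1)}\sigma_{n_1}\sigma_{n_2}\sigma_{n_3}\sigma_{n_4}(8\sigma_{11}-2\sigma_1^2)$ explicitly (values $6,4,2,-2,0$ according to linear relations among the $n_i$), and then expands each $\tau_{d_i-2}=\sum(a'_i-b'_i+1)\sigma_{a'_i,b'_i}$ and matches the resulting contributions bijectively with the monomials in $\prod P_{d_i-1}$ whose exponents sum to zero, sorted by how many are positive. Your argument instead works entirely in the symmetric-function model: you use the Weyl-type integration formula $\int_{\Gr(2,d+1)}f=[x_1^dx_2^{d-1}](x_1-x_2)f$, the identity $8\sigma_{11}-2\sigma_1^2=-2(x_1-x_2)^2$, and the closed form for $(x_1-x_2)^3F_i$, then specialize $(x_1,x_2)\to(q,q^{-1})$ and read off the constant term via the antisymmetric expansion of the degree-$(2d-1)$ polynomial $p$. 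This bypasses the casework of Lemma~\ref{integral_against_magic_class} entirely and is arguably cleaner; the paper's approach, on the other hand, makes visible the combinatorial structure of which quadruples $(n_1,n_2,n_3,n_4)$ actually contribute, which is what drives the term-by-term matching with $P_{d_1-1}\cdots P_{d_4-1}$. One small terminological slip: $x_1,x_2$ are Chern roots of the rank-$2$ dual tautological subbundle (the quotient on $\Gr(2,d+1)$ has rank $d-1$), but this does not affect the argument since you only use the identification $\sigma_j\leftrightarrow s_j(x_1,x_2)$, consistent with Lemma~\ref{integral_formula}.
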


\begin{proof}
First, observe that, by the Pieri rule,
\begin{equation*}
\tau_{d_i-2}:=\sum_{a_i+b_i=d_i-2}\sigma_{a_i}\sigma_{b_i}=\sum_{a'_i+b'_i=d_i-2}(a'_i-b'_i+1)\sigma_{a'_i,b'_i}.
\end{equation*}

By Lemma \ref{integral_against_magic_class}, the positive contributions to
\begin{equation*}
\int_{\Gr(2,d+1)}\tau_{d_1-2}\tau_{d_2-2}\tau_{d_3-2}\tau_{d_4-2}(8\sigma_{11}-2\sigma_1^2)
\end{equation*}
correspond to terms $\sigma_{a'_1,b'_1}\sigma_{a'_2,b'_2}\sigma_{a'_3,b'_3}\sigma_{a'_4,b'_4}$ with
\begin{equation*}
(a'_i-b'_i)+(a'_j-b'_j)=(a'_k-b'_k)+(a'_\ell-b'_\ell)
\end{equation*}
for some permutation $(i,j,k,\ell)$ of $(1,2,3,4)$, Moreover, if we fix $i=1$, the contribution to the integral is
\begin{equation*}
2m\prod_{i=1}^4(a'_i-b'_i+1)
\end{equation*}
where $m$ is the number of such permutations. Similarly, the negative contributions to the integral correspond to terms where
\begin{equation*}
(a'_i-b'_i+1)=(a'_j-b'_j+1)+(a'_k-b'_k+1)+(a'_\ell-b'_\ell+1),
\end{equation*} 
and the contribution to the integral is 
\begin{equation*}
-2\prod_{i=1}^4(a'_i-b'_i+1)
\end{equation*}

We match these contributions exactly with the contributions to the constant term in $P_{d_1-1}P_{d_2-1}P_{d_3-1}P_{d_4-1}$: the positive contributions come from terms
\begin{equation*}
\prod_{i=1}^4m_iq^{m_i}
\end{equation*}
with exactly two of the $m_i$ positive, and the negative contributions come from terms in which one or three of the $m_i$ are positive.
\end{proof}

One can easily deduce the following, which is also a consequence of Proposition \ref{first_point_unweighted}.

\begin{cor}
Suppose that $d_1=d$. Then,
\begin{equation*}
N_{d_1,d_2,d_3,d_4}=2(d+1)(d_2-1)(d_3-1)(d_4-1).
\end{equation*}
\end{cor}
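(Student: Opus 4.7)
The plan is to derive this corollary directly from Proposition \ref{first_point_unweighted}, by observing that in the boundary case $d_1 = d$, the weighted count $\wt{N}^{\circ}_{d,d_2,d_3,d_4}$ coincides with the unweighted count $N_{d,d_2,d_3,d_4}$.

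The key observation is a base-point rigidity argument. Any pencil $V\subset H^0(E,\cL)$ with vanishing sequence $(0,d)$ at $p_1$ must satisfy $\cL\cong\cO_E(dp_1)$, and contains a section $s$ whose divisor is exactly $dp_1$ together with a second section $t$ with $t(p_1)\ne 0$. At every point $q\ne p_1$ we have $s(q)\ne 0$, so $V$ has no base-point at $q$; and at $p_1$ itself, $t$ is non-vanishing. Hence $V$ is automatically a base-point-free pencil of degree $d$, i.e., it defines a degree-$d$ morphism $f:E\to\bP^1$.

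Consequently, for every pencil $V$ contributing to $\wt{N}^{\circ}_{d,d_2,d_3,d_4}$, the vanishing sequence at each $p_i$ ($i=2,3,4$) is necessarily $(0,d_i)$, meaning that the index $k_i$ of Definition \ref{pencil_weight} equals $0$. The multiplicity is therefore $\prod_{i=2}^{4} c_{d_i-1,0}=1$, which gives the identification $\wt{N}^{\circ}_{d,d_2,d_3,d_4}=N_{d,d_2,d_3,d_4}$. Substituting $d_1=d$ into the formula of Proposition \ref{first_point_unweighted} then yields
$$N_{d,d_2,d_3,d_4}=2d(d+1)(d-1)(d_2-1)(d_3-1)(d_4-1)\binom{d-2}{0}\cdot\frac{1}{d(d-1)}=2(d+1)(d_2-1)(d_3-1)(d_4-1).$$

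There is no real obstacle: the corollary is a clean boundary-case specialization of the weighted formula, and the only ingredient beyond Proposition \ref{first_point_unweighted} is the elementary base-point-free observation above. Alternatively, one could extract the same result from Proposition \ref{genus1_laurent} by computing the constant term of $P_{d-1}P_{d_2-1}P_{d_3-1}P_{d_4-1}$, but this combinatorial route would be considerably more involved than the geometric argument sketched here.
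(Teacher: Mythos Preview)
Your proof is correct and follows exactly one of the two routes the paper itself indicates: the text states the corollary ``is also a consequence of Proposition~\ref{first_point_unweighted},'' and your base-point-freeness observation is precisely what is needed to identify $\wt{N}^{\circ}_{d,d_2,d_3,d_4}$ with $N_{d,d_2,d_3,d_4}$ in this boundary case. One small remark: the Laurent-polynomial route (the paper's primary placement, right after Proposition~\ref{genus1_laurent}) is not as involved as you suggest---since each $P_r$ satisfies $P_r(1)=0$, one finds $Q'(1)=0$ for $Q=P_{d_2-1}P_{d_3-1}P_{d_4-1}$, and the constant term of $P_{d-1}Q$ reduces to the two extremal contributions $2(d+1)\,[q^{d+1}]Q = 2(d+1)(d_2-1)(d_3-1)(d_4-1)$.
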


In particular, we recover \cite[Theorem 2.1(f)]{harris}, as well as the fact that the number of degree $d$ covers $f:E\to\bP^1$ totally ramified at a fixed point of $E$ and one other point is equal to $\#E[d^2]-1=d^2-1$.


%
%

\vskip 3pt

\subsection{Explicit formula, and proof of Theorem \ref{genus1_duality}}\label{genus1_unweighted_explicit}

Using the Laurent polynomial formula of the previous section, we now complete the proofs of Theorems \ref{genus1_unweighted_count} and \ref{genus1_duality}.

Let $m,n$ be integers with $m\ge n$. We first compute $P_{m}P_{n}$. When $0\le k\le n$, the coefficients of $q^{m+n-2k}$ and $q^{-m-n+2k}$ in $P_{m}P_{n}$ are
\begin{align*}
\sum_{r+s=k}(m-2r)(n-2s)&=(k+1)mn-k(k+1)(m+n)+4\sum_{i=0}^{k}i(k-i)\\
&=(k+1)mn-k(k+1)(m+n)+4\left(\frac{k^2(k+1)}{2}-\frac{k(k+1)(2k+1)}{6}\right)\\
&=(k+1)\left[mn-k(m+n)+\frac{2}{3}k(k-1)\right]
\end{align*}

When $0\le k\le m-n$, the coefficients of $q^{m-n-2k}$ are
\begin{align*}
&(m-2k)(-n)+(m-2k-2)(-n+2)+\cdots+(m-2k-2n+2)(n-2)+(m-2k-2n)(n)\\
=&\frac{1}{2}\left[(2n)(-n)+(2n-2)(-n+2)+\cdots+(-2n+2)(n-2)+(-2n)(n)\right],
\end{align*}
where we have paired summands from the outside inward. In particular, the value of this coefficient does not depend on $k$, so we may take $k=0$, in which case we have already computed the $q^{m-n}$ coefficient to be
\begin{align*}
&(n+1)\left[mn-n(m+n)+\frac{2}{3}n(n-1)\right]\\
=&(n+1)\left[-\frac{1}{3}n^2-\frac{2}{3}n\right]\\
=&-\frac{1}{3}n(n+1)(n+2).
\end{align*}
Also, the coefficients of $q^{r}$ and $q^{-r}$ are equal for all $r$.

We now evaluate the constant term of $(P_{d_1-1}P_{d_2-1})\cdot (P_{d_3-1}P_{d_4-1})$ by matching coefficients in the two factors. Without loss of generality, assume that $d\ge d_1\ge d_2\ge d_3\ge d_4\ge 0$. We consider the coefficients $q^\ell$ in the first term and $q^{-\ell}$ in the second with $-d_3-d_4+2\le \ell\le d_{3}+d_{4}-2$. 

First, suppose that $d_1-d_2\ge d_3-d_4$. Then, there are five intervals over which we vary $\ell$: $[-d_3-d_4+2,d_2-d_1]$, $(d_2-d_1,d_4-d_3]$, $(d_4-d_3,d_3-d_4)$, $[d_3-d_4,d_1-d_2)$, and $[d_1-d_2,d_3+d_4-2]$.

The contribution from the interval $(d_4-d_3,d_3-d_4)$ to $N_{d_1,d_2,d_3,d_4}$ is
\begin{equation*}
\left(-\frac{1}{3}d_2(d_2+1)(d_2-1)\right)\cdot\left(-\frac{1}{3}d_4(d_4+1)(d_4-1)\right)\cdot(d_3-d_4-1)
\end{equation*}

The contribution from $(d_2-d_1,d_4-d_3]$ and $[d_3-d_4,d_1-d_2)$ is
\begin{dmath*}
2\left(-\frac{1}{3}d_2(d_2+1)(d_2-1)\right)\cdot\sum_{j=1}^{\frac{(d_1-d_2)-(d_3-d_4)}{2}}(d_4-j+1)\left[(d_3-1)(d_4-1)-(d_4-j)(d_3+d_4-2)+\frac{2}{3}(d_4-j)(d_4-j-1)\right]
\end{dmath*}

The contribution from $[-d_3-d_4+2,d_2-d_1]$ and $[d_1-d_2,d_3+d_4-2]$ is
\begin{dmath*}
2\sum_{k=0}^{\frac{d_2+d_3+d_4-d_1}{2}-1}\left\{(k+1)\left[(d_3-1)(d_4-1)-k(d_3+d_4-2)+\frac{2}{3}k(k-1)\right]\cdot(k'+1)\left[(d_1-1)(d_2-1)-k'(d_1+d_2-2)+\frac{2}{3}k'(k'-1)\right]\right\}
\end{dmath*}
where $k'=k+\frac{(d_1+d_2)-(d_3+d_4)}{2}$.

Summing these contributions yields a formula for $N_{d_1,d_2,d_3,d_4}$ in the case $d_1-d_2\ge d_3-d_4$: we may expand each summand and apply standard formulas for sums of $m$-th powers of integers for $m\le 6$. This is implemented with the help of SAGE, and yields:
\begin{dmath}\label{N_explicit_case1}
N_{d_1,d_2,d_3,d_4}=-\frac{1}{3360}d_1^7 + \frac{1}{240}d_1^5d_2^2 - \frac{1}{96}d_1^4d_2^3 + \frac{1}{96}d_1^3d_2^4 - \frac{1}{240}d_1^2d_2^5 + \frac{1}{3360}d_2^7 + \frac{1}{240}d_1^5d_3^2 - \frac{1}{48}d_1^3d_2^2d_3^2+ \frac{1}{48}d_1^2d_2^3d_3^2 - \frac{1}{240}d_2^5d_3^2 - \frac{1}{96}d_1^4d_3^3 + \frac{1}{48}d_1^2d_2^2d_3^3 - \frac{1}{96}d_2^4d_3^3 + \frac{1}{96}d_1^3d_3^4 - \frac{1}{96}d_2^3d_3^4- \frac{1}{240}d_1^2d_3^5 - \frac{1}{240}d_2^2d_3^5 + \frac{1}{3360}d_3^7 + \frac{1}{240}d_1^5d_4^2 - \frac{1}{48}d_1^3d_2^2d_4^2+ \frac{1}{48}d_1^2d_2^3d_4^2- \frac{1}{240}d_2^5d_4^2 - \frac{1}{48}d_1^3d_3^2d_4^2 + \frac{1}{48}d_2^3d_3^2d_4^2 + \frac{1}{48}d_1^2d_3^3d_4^2 + \frac{1}{48}d_2^2d_3^3d_4^2 - \frac{1}{240}d_3^5d_4^2 - \frac{1}{96}d_1^4d_4^3 + \frac{1}{48}d_1^2d_2^2d_4^3- \frac{1}{96}d_2^4d_4^3 + \frac{1}{48}d_1^2d_3^2d_4^3 + \frac{1}{48}d_2^2d_3^2d_4^3 - \frac{1}{96}d_3^4d_4^3 + \frac{1}{96}d_1^3d_4^4 - \frac{1}{96}d_2^3d_4^4 - \frac{1}{96}d_3^3d_4^4 - \frac{1}{240}d_1^2d_4^5 - \frac{1}{240}d_2^2d_4^5 - \frac{1}{240}d_3^2d_4^5+ \frac{1}{3360}d_4^7 - \frac{1}{480}d_1^5 + \frac{1}{96}d_1^4d_2 - \frac{1}{48}d_1^3d_2^2 + \frac{1}{48}d_1^2d_2^3 - \frac{1}{96}d_1d_2^4 + \frac{1}{480}d_2^5 + \frac{1}{96}d_1^4d_3 - \frac{1}{48}d_1^2d_2^2d_3 + \frac{1}{96}d_2^4d_3 - \frac{1}{48}d_1^3d_3^2- \frac{1}{48}d_1^2d_2d_3^2 + \frac{1}{48}d_1d_2^2d_3^2 + \frac{1}{48}d_2^3d_3^2 + \frac{1}{48}d_1^2d_3^3 + \frac{1}{48}d_2^2d_3^3 - \frac{1}{96}d_1d_3^4 + \frac{1}{96}d_2d_3^4 + \frac{1}{480}d_3^5 + \frac{1}{96}d_1^4d_4 - \frac{1}{48}d_1^2d_2^2d_4+ \frac{1}{96}d_2^4d_4 - \frac{1}{48}d_1^2d_3^2d_4 - \frac{1}{48}d_2^2d_3^2d_4 + \frac{1}{96}d_3^4d_4 - \frac{1}{48}d_1^3d_4^2 - \frac{1}{48}d_1^2d_2d_4^2 + \frac{1}{48}d_1d_2^2d_4^2 + \frac{1}{48}d_2^3d_4^2 - \frac{1}{48}d_1^2d_3d_4^2 - \frac{1}{48}d_2^2d_3d_4^2+ \frac{1}{48}d_1d_3^2d_4^2 - \frac{1}{48}d_2d_3^2d_4^2 + \frac{1}{48}d_3^3d_4^2 + \frac{1}{48}d_1^2d_4^3 + \frac{1}{48}d_2^2d_4^3 + \frac{1}{48}d_3^2d_4^3 - \frac{1}{96}d_1d_4^4 + \frac{1}{96}d_2d_4^4 + \frac{1}{96}d_3d_4^4 + \frac{1}{480}d_4^5 + \frac{1}{60}d_1^3-\frac{1}{60}d_1^2d_2 + \frac{1}{60}d_1d_2^2 - \frac{1}{60}d_2^3 - \frac{1}{60}d_1^2d_3 - \frac{1}{60}d_2^2d_3 + \frac{1}{60}d_1d_3^2 - \frac{1}{60}d_2d_3^2 - \frac{1}{60}d_3^3 - \frac{1}{60}d_1^2d_4 - \frac{1}{60}d_2^2d_4- \frac{1}{60}d_3^2d_4 + \frac{1}{60}d_1d_4^2-\frac{1}{60}d_2d_4^2 - \frac{1}{60}d_3d_4^2 - \frac{1}{60}d_4^3 - \frac{1}{70}d_1 + \frac{1}{70}d_2 + \frac{1}{70}d_3 + \frac{1}{70}d_4
\end{dmath}

Now, consider the case  $d_1-d_2\le d_3-d_4$. Similarly to the first case, the contribution from the interval $(d_2-d_1,d_1-d_2)$ to $N_{d_1,d_2,d_3,d_4}$ is
\begin{dmath*}
\left(-\frac{1}{3}d_2(d_2+1)(d_2-1)\right)\cdot\left(-\frac{1}{3}d_4(d_4+1)(d_4-1)\right)\cdot(d_1-d_2-1)
\end{dmath*}

The contribution from $(d_4-d_3,d_2-d_1]$ and $[d_1-d_2,d_3-d_4)$ is
\begin{dmath*}
2\left(-\frac{1}{3}d_4(d_4+1)(d_4-1)\right)\cdot\sum_{j=1}^{\frac{(d_3-d_4)-(d_1-d_2)}{2}}(d_2-j+1)\left[(d_1-1)(d_2-1)-(d_2-j)(d_1+d_2-2)+\frac{2}{3}(d_2-j)(d_2-j-1)\right]
\end{dmath*}

Finally, the contribution from $[-d_3-d_4+2,d_4-d_3]$ and $[d_3-d_4,d_3+d_4-2]$ is
\begin{dmath*}
\sum_{k=0}^{d_4-1}(k+1)\left[(d_3-1)(d_4-1)-k(d_3+d_4-2)+\frac{2}{3}k(k-1)\right](k'+1)\left[(d_1-1)(d_2-1)-k'(d_1+d_2-2)+\frac{2}{3}k'(k'-1)\right],
\end{dmath*}
where $k'=k+\frac{(d_1+d_2)-(d_3+d_4)}{2}$.

Summing as in the previous case, we get that $N_{d_1,d_2,d_3,d_4}$ is equal to:

\begin{dmath}\label{N_explicit_case2}
N_{d_1,d_2,d_3,d_4}=-\frac{1}{48}d_1^4d_4^3 + \frac{1}{24}d_1^2d_2^2d_4^3 - \frac{1}{48}d_2^4d_4^3 + \frac{1}{24}d_1^2d_3^2d_4^3 + \frac{1}{24}d_2^2d_3^2d_4^3 - \frac{1}{48}d_3^4d_4^3 - \frac{1}{120}d_1^2d_4^5 - \frac{1}{120}d_2^2d_4^5 - \frac{1}{120}d_3^2d_4^5+ \frac{1}{1680}d_4^7 + \frac{1}{48}d_1^4d_4 - \frac{1}{24}d_1^2d_2^2d_4 + \frac{1}{48}d_2^4d_4 - \frac{1}{24}d_1^2d_3^2d_4 - \frac{1}{24}d_2^2d_3^2d_4 + \frac{1}{48}d_3^4d_4 + \frac{1}{24}d_1^2d_4^3 + \frac{1}{24}d_2^2d_4^3 + \frac{1}{24}d_3^2d_4^3+ \frac{1}{240}d_4^5 - \frac{1}{30}d_1^2d_4 - \frac{1}{30}d_2^2d_4 - \frac{1}{30}d_3^2d_4 - \frac{1}{30}d_4^3 + \frac{1}{35}d_4
\end{dmath}

\begin{proof}[Proof of Theorem \ref{genus1_unweighted_count}]
Combine the above with Propositions \ref{N_schubert_formula} and \ref{genus1_laurent}.
\end{proof}

\begin{proof}[Proof of Theorem \ref{genus1_duality}]
One checks by direct computation (carried out in SAGE) that the right hand sides of (\ref{N_explicit_case1}) and (\ref{N_explicit_case2}) are sent to each other under the involution $d_i\mapsto \frac{1}{2}(d_1+d_2+d_3+d_4)-d_i$.
\end{proof}

\vskip 4pt

\section{The general case via limit linear series}\label{degeneration_section}

In this section, we use the theory of limit linear series to give a more precise version of Theorem \ref{degeneration}, which yields explicit answers to Question \ref{main_question} for any given values of $g,d,d_i$. A similar degeneration technique is used in \cite{logan,osserman,fmnp}.

\vskip 3pt

\subsection{The degeneration formula}

We adopt the notation of Question \ref{main_question} and assume that (\ref{total_ramification_conditions}) holds. In addition, we take $m=3g$, following Proposition \ref{number_of_moving_points} and the ensuing discussion. Then, let $N^g_{d_1,\ldots,d_{n+3g}}$ be the answer to Question \ref{main_question}, counting covers $f:C\to\bP^1$ with ramification index $d_i$ at fixed points $p_1,\ldots,p_n$ and moving points $p_{n+1},\ldots,p_{n+3m}$.

\begin{defn}\label{X0_def}
Fix general elliptic curves $(E_j,q_j)$, $j=1,2,\ldots,g$, and fix a general $(n+g)$-pointed rational curve $(\bP^1,p_1,\ldots,p_n,r_1,\ldots,r_g)$. Then, let $(X_0,p_1,\ldots,p_n)$ be the nodal curve obtained by attaching the $E_j$ to $\bP^1$, gluing the point $r_j$ to $q_j$ for $j=1,2,\ldots,g$.
\end{defn}

\begin{lem}\label{LLS_comb_structure}
Consider the moduli space $\cG_{X_0}$ of tuples $(V_0,p_{n+1},\ldots,p_{n+3g})$, where $V_0$ is a limit linear series of degree $d$ on $X_0$, and $p_1,\ldots,p_{3g}\in X$ are pairwise distinct smooth points of $X_0$ such that  $V_0$ has vanishing at least $(0,d_i)$ at $p_i$. Then, we have:
\begin{enumerate}
\item[(a)] Given any $[(V_0,p_{n+1},\ldots,p_{n+3g})]\in\cG_{X_0}$, $V_0$ is \textit{refined} (in the sense of \cite{eh_lls}), and exactly three of the moving points $p_i$, $i=n+1,\ldots,n+3g$ lie on each $E_j$. Moreover, the vanishing sequence of $V_0$ at $p_i$ is exactly $(0,d_i)$ for all $i=1,2,\ldots,n+3g$. In particular, none of $p_{n+1},\ldots,p_{n+3g}$ lie on $\bP^1$.
\item[(b)] $\cG_{X_0}$ is reduced of dimension 0.
\item[(c)] Any $[(V_0,p_{n+1},\ldots,p_{n+3g})]\in\cG_{X_0}$ smooths to a linear series on the general fiber of the versal deformation of $(X_0,p_{1},\ldots,p_{n+3g})$, preserving the ramification conditions at the $p_i$.
\end{enumerate}
\end{lem}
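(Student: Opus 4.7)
The plan is to establish the three parts in sequence. Part (a) is the combinatorial heart: I would combine Brill-Noether additivity (\ref{bn_number_additive}) with Riemann-Hurwitz on each component-aspect to force both the distribution of moving points and the exact vanishing profile at every marked point. Parts (b) and (c) will then follow from the étaleness built into Proposition \ref{expected_dim} and from Eisenbud-Harris smoothing, respectively.

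For (a), let $[(V_0,p_{n+1},\ldots,p_{n+3g})]\in\cG_{X_0}$, and write $m_0$ and $m_j$ for the number of moving points on $\bP^1$ and on $E_j$ respectively. Let $(a_0^j,a_1^j)$ denote the vanishing sequence of the $\bP^1$-aspect at the node $r_j$, so that the $E_j$-aspect has vanishing at least $(d-a_1^j,d-a_0^j)$ at $q_j$. Applying Proposition \ref{expected_dim} component by component in the moving-point convention gives
\begin{align*}
\rho_{\bP^1}&=2d-2-\sum_{i=1}^n(d_i-1)-\sum_{j=1}^g(a_0^j+a_1^j-1)-\sum_{\ell\text{ on }\bP^1}(d_\ell-2),\\
\rho_{E_j}&=a_0^j+a_1^j-2-\sum_{\ell\text{ on }E_j}(d_\ell-2),
\end{align*}
and a direct calculation using (\ref{total_ramification_conditions}) shows $\rho_{\bP^1}+\sum_j\rho_{E_j}=0$. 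Since the $E_j$ are general elliptic curves and $(\bP^1,p_i,r_j)$ is generally marked, Proposition \ref{expected_dim} forces each $\rho_{C_0}\ge 0$, so each vanishes; combined with the elementary identity $\rho_{C_0,\mathrm{fixed}}=\rho_{C_0,\mathrm{moving}}-m_{C_0}$, the additivity (\ref{bn_number_additive}) then forces $V_0$ to be refined.

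To pin down $(m_0,m_j)$, I would apply Riemann-Hurwitz to each aspect. A standard base-point/residual-map argument shows that for any degree-$d$ pencil on a smooth curve of genus $g_0$, the marked-point inequality
\begin{equation*}
\sum_p(a_0(p)+a_1(p)-1)\le 2d+2g_0-2
\end{equation*}
holds (over any finite set of points $p$). Subtracting $\rho_{\bP^1}=0$ from this inequality on the $\bP^1$-aspect yields $m_0\le 0$, so $m_0=0$; subtracting $\rho_{E_j}=0$ from the same inequality on each $E_j$-aspect yields $m_j\le 3$. Combined with $m_0+\sum_j m_j=3g$, this forces $m_j=3$ for every $j$. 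Finally, any excess vanishing at some $p_i$ beyond $(0,d_i)$ would strictly decrease one of the $\rho_{C_0}$ below $0$, contradicting what has just been established; hence the vanishing sequence at every $p_i$ is exactly $(0,d_i)$.

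For (b), the étale structure over branch-divisor moduli invoked in the proof of Proposition \ref{expected_dim} shows that the moduli of each component-aspect, with node sequences and moving-point configuration fixed, is smooth of its expected dimension $\rho_{C_0}=0$; the moduli $\cG_{X_0}$, restricted to refined limit linear series with a given collection of compatible node sequences, decomposes as a fiber product of these component moduli, so it is reduced of pure dimension $0$. For (c), the smoothing statement is immediate from the smoothing theorem for refined limit linear series with $\rho\ge 0$ of \cite{eh_lls}, with the ramification conditions preserved because the marked points extend to sections of the versal family. The most delicate step is the combined argument in (a): one must verify the Riemann-Hurwitz inequality carefully when base points are not all concentrated at chosen marked points, and match the fixed vs.\ moving dimension conventions consistently through the additivity statement (\ref{bn_number_additive}).
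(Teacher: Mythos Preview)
Your proof is correct and follows essentially the same route as the paper: part (a) combines the sub-additivity (\ref{bn_number_additive}) with the nonnegativity coming from Proposition~\ref{expected_dim} and the Riemann--Hurwitz bound (which the paper invokes implicitly via Proposition~\ref{number_of_moving_points}), and part (c) is the Eisenbud--Harris smoothing theorem in both cases. The only notable difference is in (b): where you appeal to the \'etaleness of Hurwitz spaces over branch-divisor moduli plus generic smoothness, the paper instead cites the specific transversality inputs directly---Theorem~\ref{genus0_count} (i.e.\ MTV) on the rational spine and the transversality argument of Lemma~\ref{identify_pencil_weights} on the elliptic tails---which is slightly sharper, since \'etaleness over the \emph{target} configuration does not by itself give reducedness of the fiber over a fixed \emph{source} configuration without the extra generic-smoothness step you have in mind.
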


\begin{proof}
By condition (\ref{total_ramification_conditions}), we always have $\rho(V_0,\{p_1,\ldots,p_{n+3g}\})=-3g$. By sub-additivity of the Brill-Noether number (\ref{bn_number_additive}) and Proposition \ref{expected_dim}, we have that $\rho(V_0,\{p_i\})_{E_j}=-3$ for all $j$, and $\rho(V_0,\{p_{i}\})_{\bP^1}=0$. Thus, the Brill-Noether number is in fact additive, so $V_0$ is a refined limit linear series. Moreover, it follows that we need three moving points on each $E_j$, and that $V_0$ cannot have higher-than-expected ramification at any of the $p_i$; this establishes (a). 

Part (b) follows from the same statements for the moduli of linear series on the individual components; on the rational spine, this is Theorem \ref{genus0_count}, and on the elliptic components, this is a consequence of the transversality argument given in Lemma \ref{identify_pencil_weights}. Finally, part (c) follows immediately from \cite[Corollary 3.7]{eh_lls}, as $V_0$ is refined, and dimensionally proper with respect to the $p_i$.
\end{proof}

\begin{lem}\label{LLS_specialization}
Let $R$ be a discrete valuation ring, and let $B=\Spec R$. Let $\pi:X\to B, \sigma_i: B\to X$, $i=1,2,\ldots,n$ be a 1-family of $n$-pointed, genus $g$ curves with special fiber isomorphic to $(X_0,p_1,\ldots,p_n)$ and smooth total space $X$. Let $p'_i$ denote the restriction of $\sigma_i$ to the geometric generic fiber $X_{\overline{\eta}}$ for $i=1,2,\ldots,n$. Suppose that $(V',p'_{n+1},\ldots,p'_{n+3g})$ is a tuple where $V'$ is a linear series on $X_{\overline{\eta}}$ and $p'_1,\ldots,p'_{n+3g}\in X_{\overline{\eta}}$ are pairwise distinct points such that $V'$ has ramification sequence $(0,d_i)$ at $p'_i$. Then, $(V',p'_{n+1},\ldots,p'_{n+3g})$ specializes to a tuple $(V_0,p_{n+1},\ldots,p_{n+3g})$ as in Lemma \ref{LLS_comb_structure}.
\end{lem}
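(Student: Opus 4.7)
The plan is to apply the Eisenbud--Harris theory of limit linear series together with the properness of $X\to B$. First one extends the moving points to sections and specializes $V'$; then one checks via a Brill--Noether computation that no nontrivial semistable modification of $X_0$ is required.

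After a finite extension of $R$, the points $p'_{n+1},\ldots,p'_{n+3g}$ on $X_{\overline{\eta}}$ become rational, and by the valuative criterion applied to the proper morphism $\pi$, each extends uniquely to a section $\sigma_i\colon B\to X$. Set $q_i:=\sigma_i(0)\in X_0$. A priori these specializations may collide with each other, with the fixed marked points $p_1,\ldots,p_n$, or with the nodes of $X_0$. Perform semistable reduction: after blowing up $X$ at any collision loci in the special fiber, together with a further finite base change if necessary, one obtains a regular family $\pi^+\colon X^+\to B$ whose special fiber $X_0^+$ is $X_0$ with a tree of rational tails and bridges attached, so that the strict transforms of the sections specialize to pairwise distinct smooth points of $X_0^+$. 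By \cite[\S 3]{eh_lls}, $V'$ then extends (after a further base change) to a refined limit linear series $V_0^+$ on $X_0^+$. Semicontinuity of vanishing sequences implies that the aspect of $V_0^+$ at each specialization has vanishing sequence at least $(0,d_i)$.

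The main step is to show $X_0^+=X_0$. The total Brill--Noether number of $V_0^+$ is invariant in the flat family, so $\rho(V_0^+,\{\text{marked pts}\})=\rho(V',\{p'_i\})=-3g$ by (\ref{total_ramification_conditions}). Refined additivity (\ref{bn_number_additive}) expresses this as $\sum_{C_0\subset X_0^+}\rho(V_0^+)_{C_0}$. Each newly attached rational component contributes a negative amount: in the simplest case, a bridge $R\cong\bP^1$ with no moving points has its aspect pinned down by refined compatibility at the two attaching nodes, yielding $\rho(V_0^+)_R=-2$, and a similar but slightly more intricate computation treats tails or bridges hosting some moving points. On the other hand, the original components of $X_0$ (the $\bP^1$-spine and the elliptic tails) carry general marked points by Definition \ref{X0_def}, so Proposition \ref{expected_dim} together with Proposition \ref{expected_dim_fixedL} bound their Brill--Noether contributions from above. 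Summing, the only way to reach the total $-3g$ is if no new rational components were attached.

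Hence $X_0^+=X_0$, and $V_0:=V_0^+$ is a refined limit linear series on $X_0$ with vanishing at least $(0,d_i)$ at each marked point $p_i$ ($i\le n$) and $q_i$ ($i>n$). Applying Lemma \ref{LLS_comb_structure}(a), the tuple $(V_0,q_{n+1},\ldots,q_{n+3g})$ lies in $\cG_{X_0}$, as desired. The main technical obstacle is the Brill--Noether bookkeeping: one must handle systematically the cases in which moving points collide on newly inserted rational components, computing the Brill--Noether contribution of each such component sharply and verifying that any nontrivial semistable modification produces a deficit exceeding the refined additivity budget.
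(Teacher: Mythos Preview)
Your overall framework---extend the sections after base change, perform semistable reduction to a modification $Y_0$ of $X_0$, pass to a limit linear series, and use additivity of the Brill--Noether number to rule out nontrivial modification---matches the paper exactly. However, the key numerical claim you rely on is wrong, and this is a genuine gap.

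You assert that a newly attached rational bridge $R$ with no moving points has $\rho(V_0^+)_R=-2$. But on $\bP^1$ with two nodes, if the vanishing sequences at the nodes are $(a_0,a_1)$ and $(b_0,b_1)$, then
\[
\rho_R = 2(d-1) - (a_0+a_1-1) - (b_0+b_1-1),
\]
and in the refined case $(b_0,b_1)=(d-a_1,d-a_0)$, so $\rho_R=0$. More generally, by Proposition~\ref{expected_dim} (the expected-dimension statement on $\bP^1$), \emph{every} rational component $R$ of $Y_0$ has $\rho_R\ge 0$, whether or not it carries moving points. So the strategy ``new rational components contribute negatively, hence cannot appear'' does not work, and the ``slightly more intricate computation'' you defer to would not save it.

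The paper's argument is different and more delicate. From $\rho_R\ge 0$ for all rational $R$ and $\rho_{E'_j}\ge -3$ for each elliptic component (Propositions~\ref{expected_dim} and \ref{number_of_moving_points}), sub-additivity forces equality everywhere and refinedness. One then lets $\alpha_j$ be the number of moving points on $E'_j$ and $\beta_j$ the number of rational trees attached to $E'_j$ away from the spine, and shows $\rho_{E'_j}=-(\alpha_j+\beta_j)$, so $\sum_j(\alpha_j+\beta_j)=3g$. But each such tree carries at least two of the $3g$ moving points, whence $\sum_j(\alpha_j+2\beta_j)\le 3g$. Subtracting gives $\beta_j=0$ for all $j$ and all moving points lie on the elliptic components, so $Y_0=X_0$. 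The point is that the obstruction to new components is not a Brill--Noether deficit on the new components themselves, but a pigeonhole constraint on how the moving points can be distributed.
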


\begin{proof}
The content of the lemma is that the $p'_{i}$, $i=n+1,\ldots,n+3g$ specialize to distinct smooth points of the special fiber. Suppose that this is not the case: then, after a combination of blow-ups and base-changes, the $p'_i$ specialize to distinct smooth points on a compact-type curve $Y_0$ with a non-trivial map $c:Y_0\to X_0$ contracting rational tails and bridges. Moreover, $Y_0$ is equipped with a limit linear series $W_0$ with ramification conditions as above at distinct smooth points $p'_i$. As before, we have $\rho(W_0,\{p_i\})=-3g$. Let $E'_j$ denote the unique component of $Y_0$ mapping to $E_j\subset X_0$, and $q'_j\in E'_j$ denote the unique point of $E'_j$ such that $c(q'_j)=q_j$.

We claim that if such a $W_0$ exists, then in fact $Y_0=X_0$. First, note that $\rho(W_0,\{p_i\})_{R}\ge0$ for any rational component $R\subset Y_0$, by Proposition \ref{number_of_moving_points}. Also, the elliptic components of $Y_0$ are general, so by Propositions \ref{expected_dim} and \ref{number_of_moving_points}, the $\rho(W_0,\{p_i\})_{E'_j}\ge-3$ for all $j$. Thus, by sub-additivity of the Brill-Noether number  (\ref{bn_number_additive}), equality must hold everywhere, and moreover $W_0$ is a refined limit.

For each $j$, let $\alpha_j$ be the number of moving points on $E'_j$, and let $\beta_j$ be the number of trees of rational curves attached to $E'_j$ away from $q'_j$. Then, $\rho(W_0,\{p_i\})_{E'_j}=-(\alpha_j+\beta_j)$. Thus,
\begin{equation*}
3g=-\rho(W_0)=\sum_{j}(\alpha_j+\beta_j).
\end{equation*}
On the other hand, each such tree of rational curves attached to an $E'_j$ away from $q'_j$ contains at least two of the $p'_i$, so we have 
\begin{equation*}
\sum_{j}(\alpha_j+2\beta_j)\le 3g.
\end{equation*}
Therefore, $\beta_j=0$ for all $j$, from which it follows that $c$ is an isomorphism. This completes the proof.
\end{proof}

\begin{prop}\label{degeneration_formula_precise}
The answer $N^g_{d_1,\ldots,d_{n+3g}}$ to Question \ref{main_question} is computed in the following way. Consider all distributions 
\begin{equation*}
S=(\{p'_1,p'_2,p'_3\},\{p'_4,p'_5,p'_6\},\ldots,\{p'_{3g-2},p'_{3g-1},p'_{3g}\})
\end{equation*} of the points $p_{n+1},\ldots,p_{n+3g}$ onto the $E_j$ such that each elliptic component contains exactly three of the $p_i$. For each $E_j$, containing the points $r_{3j-2},r_{3j-1},r_{3j}$, consider all possible vanishing sequences $(a_j,b_j)$ such that
\begin{equation*}
(a_j+b_j)+(d_{3j-2}+d_{3j-1}+d_{3j})=2d+4.
\end{equation*}
Then, take the product
\begin{equation*}
\left[\int_{\Gr(2,d+1)}\left(\prod_{j=1}^{g}\sigma_{d-a_j-1,d-b_j}\cdot\prod_{i=1}^{n}\sigma_{d_{i}-1}\right)\right]\cdot\prod_{j=1}^{g}N_{b_j-a_j,d_{3j-2},d_{3j-1},d_{3j}}.
\end{equation*}
Finally, sum the resulting products over all choices of $S,(a_j,b_j)$.
\end{prop}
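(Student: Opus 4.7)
The plan is to reduce the count on a general smooth curve $(C, p_1, \ldots, p_n)$ to a count on the comb curve $X_0$ of Definition \ref{X0_def}, and then decompose the latter according to the Eisenbud--Harris aspect data of refined limit linear series. Concretely, I would first argue that there is a bijection between the covers enumerated by $N^g_{d_1,\ldots,d_{n+3g}}$ and the points of the moduli space $\cG_{X_0}$ from Lemma \ref{LLS_comb_structure}. In one direction, one specializes $(C,p_1,\ldots,p_{n+3g})$ to $(X_0,p_1,\ldots,p_{n+3g})$ along a smoothing family and appeals to Lemma \ref{LLS_specialization} to produce a refined limit linear series of the desired combinatorial type. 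In the other direction, Lemma \ref{LLS_comb_structure}(c) guarantees that every refined limit on $X_0$ smooths uniquely to the general fiber, preserving all ramification conditions; reducedness from Lemma \ref{LLS_comb_structure}(b) ensures that the smoothing map is a bijection on closed points rather than a map with multiplicities.

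Next, I would unpack $\cG_{X_0}$ component-by-component. By Lemma \ref{LLS_comb_structure}(a), each refined limit $V_0$ has exactly three of the moving points $p_{n+1},\ldots,p_{n+3g}$ on each elliptic tail $E_j$, which accounts for the outer sum over distributions $S$. At each node $r_j$, refinedness forces the $E_j$-aspect and the $\bP^1$-aspect to have complementary vanishing sequences: if $V_0$ has vanishing sequence $(a_j,b_j)$ on $E_j$ at $r_j$, then the $\bP^1$-aspect has vanishing sequence $(d-b_j,d-a_j)$ there. The condition $(a_j+b_j)+(d_{3j-2}+d_{3j-1}+d_{3j})=2d+4$ is exactly the requirement from Proposition \ref{expected_dim_fixedL} that the $E_j$-aspect moduli space be zero-dimensional, so the sum over $(a_j,b_j)$ is finite. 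Because $V_0$ is refined, fixing the node data $(a_j,b_j)$ completely decouples the $\bP^1$-aspect from the elliptic aspects, so the count over all limits with prescribed $(S,\{(a_j,b_j)\})$ factors as a product.

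I would then evaluate each factor using previously established formulas. The $\bP^1$-aspect is a pencil on $\bP^1$ of degree $d$ with prescribed vanishing at the fixed points $p_1,\ldots,p_n$ and at the nodes $r_1,\ldots,r_g$; by the general-point specialization underlying Theorem \ref{genus0_count}, the number of such pencils is the Schubert integral on $\Gr(2,d+1)$ written in the statement, where the vanishing $(d-b_j,d-a_j)$ at $r_j$ translates into $\sigma_{d-a_j-1,d-b_j}$ via the standard dictionary between vanishing sequences and Schubert conditions. For each $E_j$-aspect, twisting down by $a_j q_j$ removes the base-point at $r_j$ and reduces the count to that of base-point-free pencils of degree $d-a_j$ on a general elliptic curve with ramification $b_j-a_j$ at the node and ramification $d_{3j-2},d_{3j-1},d_{3j}$ at the three moving points (using Theorem \ref{genus1_unweighted_count}); note that the reindexing is consistent with the dimensional constraint verified above. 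Multiplying these pieces and summing over $S$ and $\{(a_j,b_j)\}$ yields the stated formula.

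The only genuinely delicate point — which is already handled by the preliminary lemmas — is ruling out ``parasitic'' limit linear series supported on degenerations of $X_0$. This is precisely the content of Lemma \ref{LLS_specialization}: the bound $\sum_j(\alpha_j+2\beta_j)\le 3g$ forces $\beta_j=0$, so no extra rational bubbles appear, and Lemma \ref{LLS_comb_structure}(a) further forces the vanishing sequences at the $p_i$ to be exactly $(0,d_i)$. Once these rigidity statements are invoked, the rest of the proof is a straightforward bookkeeping exercise in combining Theorem \ref{genus0_count} and Theorem \ref{genus1_unweighted_count}, so I expect no additional obstacles beyond those already dispatched in \S\ref{preliminary_section} and \S\ref{unweighted_section}.
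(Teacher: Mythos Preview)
Your proposal is correct and follows essentially the same approach as the paper's proof: invoke Lemmas \ref{LLS_comb_structure} and \ref{LLS_specialization} to identify $N^g_{d_1,\ldots,d_{n+3g}}$ with the number of points of $\cG_{X_0}$, then stratify by the distribution $S$ and the node vanishing data $(a_j,b_j)$, use refinedness to decouple the aspects, and evaluate the factors via Theorems \ref{genus0_count} and \ref{genus1_unweighted_count} after twisting away base-points on the elliptic tails. The paper's own proof is simply a terser version of exactly this argument.
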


\begin{proof}
By Lemmas \ref{LLS_comb_structure} and \ref{LLS_specialization}, $N^g_{d_1,\ldots,d_{n+3g}}$ is equal to the number of $(V_0,p_{n+1},\ldots,p_{n+3g})$ as described in Lemma \ref{LLS_comb_structure}(a). To enumerate such limit linear series, we consider all possible $S$ as above, then all possible combinations of vanishing sequences $(a_j,b_j)$ at the nodes $q_j\in E_j$. Then, as $V_0$ is a refined series, the vanishing sequence at $r_j\in \bP^1$ must be $(d-b_j,d-a_j)$. After twisting away base-points at the $q_j$, the terms in the product then count the number of linear series on the components of $X_0$, by Theorems \ref{genus0_count} and \ref{genus1_unweighted_count}.
\end{proof}

\begin{proof}[Proof of Theorem \ref{degeneration}]
Immediate from Proposition \ref{degeneration_formula_precise}.
\end{proof}

Simplifying the degeneration formula of Proposition \ref{degeneration_formula_precise} seems to be a difficult combinatorial problem. It seems natural to guess that in higher genus, weighted counts of pencils are better behaved than unweighted counts of branched covers. More precisely, we consider the following variant of Question \ref{main_question}: 

\begin{question}\label{main_question_weighted}
Let $(C,p_1,\ldots,p_n)$ be a general pointed curve of genus $g$, where $2g-2+n>0$. Let $d,d_1,d_2,\ldots,d_{n+m}$ be integers such that $2\le d_i\le 2d-g-1$ for all $i$. Furthermore, assume that (\ref{total_ramification_conditions}) holds. How many $(m+1)$-tuples $(p_{n+1},\ldots,p_{n+m},V)$ are there, where $p_i\in C$ are pairwise distinct points, $V$ is a degree $d$ pencil on $C$ with total vanishing at least $d_i$ at each $p_i$, and each $(p_{n+1},\ldots,p_{n+m},V)$ is counted with multiplicity 
\begin{equation*}
C^{d_1,\ldots,d_{n+m}}_{k_1,\ldots,k_{n+m}}=\prod_{i=1}^{n+m}c_{d_i-k_i-1,k_i},
\end{equation*}
where $k_i$ is the order of base-point of $V$ at $p_i$, and the $c_{a,b}$ are as in Lemma \ref{LR_coeffs}?
\end{question}

We remark here that in this setting, one gets a degeneration formula for the weighted number of pencils $\wt{N}^g_{d_1,\ldots,d_{n+3g}}$ on $C$ by replacing $N_{b_j-a_j,d_{3j-2}-a_j,d_{3j-1}-a_j,d_{3j}-a_j}$ with $\wt{N}^{\circ}_{b_j-a_j,d_{3j-2}-a_j,d_{3j-1}-a_j,d_{3j}-a_j}$ in Proposition \ref{degeneration_formula_precise}, see Proposition \ref{first_point_unweighted}.

We make one final observation, that in the weighted setting, it suffices to consider the case $n=1$, that is, the case in which there is only one fixed ramification condition.

\begin{prop}\label{consolidate_fixed_points}
The answer to Question \ref{main_question_weighted} remains the same upon replacing $p_1,\ldots,p_n$ by a single general point $p'_1\in C$, at which we impose the condition of total vanishing $d'_1=(d_1+\cdots+d_n)-n+1$.
\end{prop}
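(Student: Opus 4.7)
The weights in Definition \ref{pencil_weight} are designed, via Lemma \ref{LR_coeffs}, precisely so that a weighted total-vanishing-$d_i$ condition at a general fixed point, in Schubert calculus on $\Gr(2,d+1)$, manifests as the class $\sigma_1^{d_i-1}$ in place of the unweighted $\sigma_{d_i-1}$. The Chow-ring identity
\[
\prod_{i=1}^{n} \sigma_1^{d_i-1} = \sigma_1^{(d_1-1)+\cdots+(d_n-1)} = \sigma_1^{d'_1-1}
\]
then shows that $n$ weighted fixed-point conditions combine to give the same Schubert-calculus contribution as a single weighted fixed-point condition with total vanishing $d'_1=(d_1+\cdots+d_n)-n+1$.

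To execute the plan, I would use the weighted analogue of the degeneration formula of Proposition \ref{degeneration_formula_precise}, indicated in the remark immediately preceding the present proposition. In that formula, the fixed points $p_1,\ldots,p_n$ lie on the rational spine of the comb curve $X_0$; in the unweighted setting they contribute the product $\prod_i \sigma_{d_i-1}$ to a Schubert integral on $\Gr(2,d+1)$. The weighted modification replaces each $\sigma_{d_i-1}$ by $\sigma_1^{d_i-1}$, exactly as in the genus-0 computation displayed in the remark following Definition \ref{pencil_weight}. All remaining ingredients, namely the distribution $S$ of moving points onto the elliptic components $E_j$, the vanishing sequences $(a_j,b_j)$ at the nodes $q_j$, and the elliptic-component weighted counts $\wt{N}^{\circ}_{b_j-a_j,\,d_{3j-2}-a_j,\,d_{3j-1}-a_j,\,d_{3j}-a_j}$ from Proposition \ref{first_point_unweighted}, are independent of the fixed-point data on the rational spine. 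Applying the multiplicative identity above termwise then yields the asserted equality of weighted counts.

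The main obstacle is establishing the weighted analogue of Proposition \ref{degeneration_formula_precise} itself, specifically that weighted fixed conditions on the rational spine collapse to $\sigma_1^{d_i-1}$. This reduces to Lemma \ref{LR_coeffs}, exactly as in the genus-zero digression: the weighted sum over possible base-point orders $k_i$ at $p_i$ is $\sum_{k_i} c_{d_i-k_i-1,\,k_i}\sigma_{d_i-k_i-1,\,k_i}=\sigma_1^{d_i-1}$. Refinedness of the relevant limit linear series (Lemma \ref{LLS_comb_structure}) ensures that the rational-spine and elliptic-component contributions decouple cleanly, and the dimension estimates of Proposition \ref{expected_dim} guarantee that no parasitic components contribute. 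Once the weighted degeneration formula is in hand, the consolidation is a purely formal manipulation in Schubert calculus.
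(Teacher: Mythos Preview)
Your argument is correct, but it takes a different and somewhat more circuitous route than the paper's. You degenerate $C$ all the way to the comb $X_0$ of Definition \ref{X0_def} and then compare the two weighted counts via the (fully fleshed-out) weighted version of Proposition \ref{degeneration_formula_precise}, observing that the only difference between them is the spine factor $\prod_{i=1}^n \sigma_1^{d_i-1}$ versus $\sigma_1^{d'_1-1}$, which are manifestly equal. The paper instead degenerates $(C,p_1,\ldots,p_n)$ only one step, to $C\cup\bP^1$ with $p_1,\ldots,p_n$ specializing to general points of the rational tail and $p'_1 = C\cap\bP^1$; the weighted count of limit linear series then factors as a sum over vanishing sequences at the node of a genus-zero weighted count on $\bP^1$ (which collapses to $\sigma_1^{d'_1-1}$ exactly as in the remark after Definition \ref{pencil_weight}) times a weighted count on $C$ with the single fixed condition at $p'_1$.

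The advantage of the paper's approach is economy: it does not require formulating and justifying the complete weighted analogue of Proposition \ref{degeneration_formula_precise}, which you correctly flag as your main obstacle (the remark preceding this proposition only specifies how the elliptic factors change, not the spine factor). Your approach has the merit of making the consolidation literally a one-line Chow-ring identity once that machinery is in place, and it shows more transparently why the combined exponent is $d'_1-1=\sum(d_i-1)$. Both ultimately rest on the same mechanism, namely Lemma \ref{LR_coeffs} applied on a rational component carrying the fixed points.
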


\begin{proof}
We degenerate $C$ to the nodal curve $C_0\cong C\cup\bP^1$ so that that the points $p_1,\ldots,p_n$ specialize to general points on $\bP^1$, and count limit linear series on the pointed curve $(C_0,p'_1)$, where $p'_1=C\cap\bP^1$. The details are left to the reader.
\end{proof}

\end{document}